\theoremstyle{plain}
\newtheorem{thm}{Theorem}[section]
\newtheorem*{thm*}{Theorem}
\newtheorem{prop}[thm]{Proposition}
\newtheorem{lem}[thm]{Lemma}
\newtheorem*{prop*}{Proposition}
\theoremstyle{definition}
\newtheorem{deft}[thm]{Definition}
\theoremstyle{remark}
\newtheorem{rmk}[thm]{Remark}
\numberwithin{equation}{section}
\def\ve{\varepsilon}
\def\c{\mathcal{C}}
\def\d{{\mathcal{C}_{0}^{\infty}}(\mathbb{R}^{2})}
\def\R{\mathbb{R}}
\def\N{\mathbb{N}}
\def\d{\mathop{}\mathopen{}\mathrm{d}}
\def\ind{\mathbf{1}}
\def\XXint#1#2#3{{\setbox0=\hbox{$#1{#2#3}{\displaystyle\int}$}\vcenter{\hbox{$#2#3$}}\kern-.5\wd0}}
\newcommand{\dint}{\displaystyle\int}
\newcommand{\dsum}{\displaystyle\sum}
\newcommand\hone{{\mathcal H}^1}
\newcommand{\be}{\begin{enumerate}}
\newcommand{\ee}{\end{enumerate}}
\newcommand{\op}[1]{\operatorname{#1}}
\title{Uniform estimates for a Modica-Mortola type approximation of branched transportation}
\author{Antonin Monteil\footnote{Laboratoire de Math\'ematiques d'Orsay, Universit\'e Paris-Sud 11, B\^at. 425, 91405, Orsay, France (e-mail: antonin.monteil@math.u-psud.fr)}}
\begin{document}
\selectlanguage{english}
\maketitle
\begin{abstract}
Models for branched networks are often expressed as the minimization of an \mbox{energy} $M^\alpha$ over vector measures concentrated on $1$-dimensional rectifiable sets with a divergence constraint. We study a Modica-Mortola type approximation $M^\alpha_\ve$, introduced by Edouard Oudet and Filippo Santambrogio, which is defined over $H^1$ vector measures. These energies induce some pseudo-distances between $L^2$ functions obtained through the minimization problem $\min \{M^\alpha_\ve (u)\;:\;\nabla\cdot  u=f^+-f^-\}$. We prove some uniform estimates on these pseudo-distances which allow us to \mbox{establish} a $\Gamma$-convergence result for these energies with a divergence constraint.
\end{abstract}

\section{Introduction}
Branched transportation is a classical problem in optimization: it is a variant of the Monge-Kantorovich optimal transportation theory in which the transport cost for a mass $m$ per unit of length is not linear anymore but sub-additive. More precisely, the cost to transport a mass $m$ on a length $l$ is considered to be proportional to $m^\alpha l$ for some $\alpha\in ]0,1[$. As a result, it is more efficient to transport two masses $m_1$ and $m_2$ together instead of transporting them separately. For this reason, an optimal pattern for this problem has a ``graph structure'' with branching points. Contrary to what happens in the Monge-Kantorovich model, in the setting of branched transportation, an optimal structure cannot be described only using a transport plan, giving the correspondence between origins and destinations, but we need a model which encodes all the trajectories of mass particles. \\
\indent Branched transportation theory is motivated by many structures that can be found in the nature: vessels, trees, river basins\dots\ Similarly, as a consequence of the economy of scale, large roads are proportionally cheaper than large ones and it follows that the road and train networks also present this structure. Surprisingly the theory has also had theoretical applications: recently, it has been used by F. Bethuel in \cite{Bethuel:2014} so as to study the density of smooth maps in Sobolev spaces between manifolds.

Branched transportation theory was first introduced in the discrete framework by E. N. Gilbert in \cite{Gilbert:1967} as a generalization of the Steiner problem. In this case an admissible structure is a weighted graph composed of oriented edges of length $l_i$ on which some mass $m_i$ is flowing. The cost associated to it is then $\sum_i l_i m_i^\alpha$ and it has to be minimized over all graphs which transport some given atomic measure to another one.
More recently, the branched transportation problem was generalized to the continuous framework by Q. Xia in \cite{Xia:2003} by means of a relaxation of the discrete energy (see also \cite{Xia:2004}). Then, many other models and generalizations have been introduced (see \cite{Maddalena:2003} for a Lagrangian formulation, see also \cite{Bernot:2005}, \cite{Bernot:2008}, \cite{Bernot:2009} for different generalizations and regularity properties.). In this paper, we will concentrate on the model with a divergence constraint, due to Q. Xia. However, this is not restrictive since all these models have been proved to be equivalent (see \cite{Bernot:2009} and \cite{Pegon:2015}).

In this model, a transport path is represented as a vector measure $u$ on some open set $\Omega\subset\R^d$ such that $\nabla \cdot u=\mu^+-\mu^-$ for two probability measures $\mu^+$ and $\mu^-$. Then the energy of $u$ is defined as $M^\alpha (u)=\int_M \theta^\alpha d\hone $ if $u$ is a vector measure concentrated on a rectifiable $1$-dimensional set $M$ on which $u$ has multiplicity $\theta$ w.r.t. the Hausdorff measure (see \cite{Bernot:2009} for more details). In this framework, $u$ must be considered as the momentum (the mass $\theta$ times the velocity) of a particle at some point. Then $(\nabla\cdot  u) (x)$ represents the difference between incoming and outcoming mass at each point $x$.

In this paper, we are interested in some approximation of branched transportation proposed by E. Oudet and F. Santambrogio few years ago in \cite{Oudet:2011} and which has interesting numerical applications. This model was inspired by the well known scalar phase transition model proposed by L. Modica and S. Mortola in \cite{Modica:1977}. Given $u\in H^1 (\Omega,\R^d)$ for some bounded open subset $\Omega\subset \R^d$, E. Oudet and F. Santambrogio introduced the following energy:
$$M^\alpha_\ve (u)=\ve^{-\gamma_1}\int_\Omega |u|^\beta +\ve^{\gamma_2} \int_\Omega |\nabla u|^2,$$
where $\beta\in (0,1)$ and $\gamma_1,\gamma_2>0$ are some exponents depending on $\alpha$ (see \eqref{malphaepsilon}). If $u$ does not belong to the set $H^1 (\Omega)$, the value of $M^\alpha_\ve$ is taken as $+\infty$. 

We recall the heuristic which shows why $M^\alpha_\ve$ is an approximation of $M^\alpha$ (see \cite{Oudet:2011}): assume that $\mu^-$ (resp. $\mu^+$) is a point source at $S_1$ (resp. $S_2$) with mass $m$. Then, it is clear that the optimal path for $M^\alpha$ between these two measures is the oriented edge $S=(S_1,S_2)$ of length $l$ with a mass $m$ flowing on it. We would like to approximate this structure, seen as a vector measure $u$ concentrated on $S$, by some $H^1$ vector fields $v$ which are more or less optimal for $M^\alpha_\ve$. What we expect is that $v$ looks like a convolution of $u$ with a kernel $\rho$ depending on $\ve$ and $m$: $v=u\ast \rho_R$ for some $R=R(\ve,m)$, where 
 \begin{equation}\label{rhor}
 \rho_R(x)=R^{-d}\rho (R^{-1}x)
 \end{equation}
for some fixed smooth and compactly supported radial kernel $\rho\in \c^\infty_c (\R^d)$. Then the support of $v$ is like a strip of width $R$ around $S$ so that $| v|$ is of the order of $m/R^{d-1}$ and $|\nabla v|$ is of the order of $m/R^d$. This gives an estimate of $M^\alpha_\ve (v)$ like
\begin{equation}\label{heuristic}
M^\alpha_\ve (v)\simeq \ve^{-\gamma_1} R^{d-1} (m/R^{d-1})^\beta l+\ve^{\gamma_2} R^{d-1} (m/R^d)^2 l.
\end{equation}
With our choice for the exponents $\gamma_1$, $\gamma_2$ and $\beta$, the optimal choice for $R$ is
\begin{equation}\label{R}
R=\ve^{\gamma}m^{\frac{1-\gamma}{d-1}},
\end{equation}
where 
\begin{equation}\label{defgamma}\gamma=\frac{2}{2d-\beta (d-1)}=\frac{\gamma_2}{d+1}.
\end{equation} 
This finally leads to $M^\alpha_\ve (v)\simeq m^\alpha$ as expected.

It was proved in \cite{Oudet:2011} that, at least in two dimensions, the energy sequence $(M^\alpha_\ve)_{\ve>0}$ $\Gamma$-converges to the branched transportation functional $c_0 M^\alpha$ for some constant $c_0$ and for some suitable topology (see Theorem \ref{gammaconvergenceoudet} page \pageref{gammaconvergenceoudet}). This result has been interestingly applied to produce a numerical method. However, rather than a $\Gamma$-convergence result on $M_\ve^\alpha$ we would need to deal with the functionals $\overline{M}_\ve^\alpha$, obtained by adding a divergence constraint: it should be shown that $\overline{M}^\alpha_\ve (u):=M^\alpha_\ve (u)+I_{\nabla\cdot  u=f_\ve}$ $\Gamma$-converges to $c_0\overline{M}^\alpha (u):=c_0 M^\alpha_\ve (u)+ I_{\nabla\cdot  u=\mu^+-\mu^-}$, where $f_\ve \in L^2$ is some suitable approximation of $\mu^+-\mu^-$ and $I_A(u)$ is the indicator function in the sense of convex analysis that is $0$ whenever the condition is satisfied and $+\infty$ otherwise. Even if this property was not proved in \cite{Oudet:2011}, the effectiveness of the numerical simulations made the authors think that it actually holds true. Note that an alternative using a penalization term was proposed in \cite{Santambrogio:2010} to overcome this difficulty.

In section \ref{mathematicalsetting} we recall Xia's formulation of branched transportation and its approximation $M^\alpha_\ve$ introduced by E. Oudet and F. Santambrogio. The longest part of this paper, section \ref{localestimatesection}, is devoted to a local estimate which gives a bound on the minimum value $d^\alpha_\ve (f^+,f^-):=\min\{M^\alpha_\ve (u)\;:\; \nabla\cdot  u=f\}$ depending on $\|f\|_{L^1}$, $\|f\|_{L^2}$ and $\op{diam}(\Omega)$ (see Theorem \ref{localestimate} page \pageref{localestimate}). In section \ref{comparisonsection}, we deduce a comparison between $d^\alpha_\ve$ and the Wasserstein distance with an ``error term'' involving the $L^2$ norm of $f^+-f^-$. As an application of this inequality, in the last section, we will prove the following $\Gamma$-convergence result which was lacking in \cite{Oudet:2011}
\begin{thm}\label{gammacv}
Let $(f_\varepsilon)_{\varepsilon>0}\subset L^2(\Omega)$ be a sequence weakly converging to $\mu$ as measures when $\varepsilon\to 0$. Assume that the sequence $(f_\varepsilon)_{\varepsilon>0}$ satisfies
\begin{equation*}\label{hypponfeps}
\int_\Omega f_\varepsilon(x)\d x=0\quad\text{and}\quad\ve^{\gamma_2}\|f_\ve\|_{L^2}^2\underset{\ve\to 0}{\longrightarrow} 0.
\end{equation*}
There exists a constant $c_0>0$ such that the functional sequence $(\overline{M}^\alpha_{\ve})_{\ve>0}$ $\Gamma$-converges to $c_0\overline{M}^\alpha$ as $\ve\to 0$. Moreover $c_0$ is the minimum value for the minimizing problem \eqref{chan}.
\end{thm}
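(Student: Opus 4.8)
The plan is to prove the two $\Gamma$-convergence inequalities separately, working in the weak-$*$ topology on vector measures used in Theorem~\ref{gammaconvergenceoudet}, and to take for $c_0$ exactly the constant of that theorem, so that the last sentence about~\eqref{chan} is inherited for free. For the $\liminf$ inequality, let $u_\ve\to u$; I may assume $\lim_\ve \overline{M}^\alpha_\ve(u_\ve)$ exists and is finite, so that for small $\ve$ one has $\nabla\cdot u_\ve=f_\ve$ and $\overline{M}^\alpha_\ve(u_\ve)=M^\alpha_\ve(u_\ve)$. Passing to the limit in $-\int_\Omega \nabla\varphi\cdot u_\ve=\int_\Omega \varphi f_\ve$ for every $\varphi\in\c^\infty_c$ gives $\nabla\cdot u=\mu$, hence $c_0\overline{M}^\alpha(u)=c_0M^\alpha(u)$, and this is dominated by $\liminf_\ve M^\alpha_\ve(u_\ve)$ by the lower-bound part of Theorem~\ref{gammaconvergenceoudet}. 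This half is the unconstrained result plus the stability of the divergence constraint under weak-$*$ convergence, and I expect no real difficulty there.

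For the $\limsup$ inequality, fix $u$ with $\nabla\cdot u=\mu^+-\mu^-$ and $M^\alpha(u)<+\infty$ (otherwise the target is $+\infty$). By Theorem~\ref{gammaconvergenceoudet} there is $\tilde u_\ve\to u$ with $\limsup_\ve M^\alpha_\ve(\tilde u_\ve)\le c_0M^\alpha(u)$; inspecting the construction of~\cite{Oudet:2011} (a mollification at the scale $R$ of~\eqref{R} of a polyhedral approximation of $u$, followed by a diagonal argument) one may moreover take $g_\ve:=\nabla\cdot\tilde u_\ve\in L^2$ with $\ve^{\gamma_2}\|g_\ve\|_{L^2}^2\to 0$. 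The divergence defect $h_\ve:=f_\ve-g_\ve$ then satisfies $\int_\Omega h_\ve=0$, $h_\ve\rightharpoonup 0$ as measures (since both $f_\ve$ and $g_\ve$ weak-$*$ converge to $\mu$), and $\ve^{\gamma_2}\|h_\ve\|_{L^2}^2\to 0$. I would correct the constraint by setting $u_\ve:=\tilde u_\ve+v_\ve$, where $v_\ve$ realizes $d^\alpha_\ve(h_\ve^+,h_\ve^-)$, so that $\nabla\cdot u_\ve=f_\ve$.

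Since $\Omega$ is bounded and $\int_\Omega h_\ve=0$, the weak-$*$ convergence $h_\ve\rightharpoonup 0$ forces $W_1(h_\ve^+,h_\ve^-)\to 0$ (the total variations being uniformly bounded, one tests against the compact family of normalised $1$-Lipschitz functions); feeding this together with $\ve^{\gamma_2}\|h_\ve\|_{L^2}^2\to 0$ into the local estimate of Theorem~\ref{localestimate} combined with the Wasserstein comparison of section~\ref{comparisonsection} yields $M^\alpha_\ve(v_\ve)=d^\alpha_\ve(h_\ve^+,h_\ve^-)\to 0$. Then $v_\ve\rightharpoonup 0$ (using the $L^1$-bound on $v_\ve$ furnished by Theorem~\ref{localestimate} and the compactness and lower-bound parts of Theorem~\ref{gammaconvergenceoudet}), so $u_\ve\to u$; and from the subadditivity of $t\mapsto t^\beta$ on the potential term and Cauchy--Schwarz on the Dirichlet term,
\[
M^\alpha_\ve(u_\ve)\le M^\alpha_\ve(\tilde u_\ve)+M^\alpha_\ve(v_\ve)+2\sqrt{M^\alpha_\ve(\tilde u_\ve)}\,\sqrt{M^\alpha_\ve(v_\ve)},
\]
which gives $\limsup_\ve \overline{M}^\alpha_\ve(u_\ve)\le c_0M^\alpha(u)=c_0\overline{M}^\alpha(u)$, completing the recovery sequence.

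The main obstacle is precisely the corrector $v_\ve$: the defect $h_\ve$ is small only in the weak-$*$ sense and not in $L^1$ or $L^2$, so bounding $d^\alpha_\ve(h_\ve^+,h_\ve^-)$ by its $L^1$ mass is useless; one genuinely needs the local estimate of Theorem~\ref{localestimate}---the technical heart of the paper---in the refined form that converts smallness of $W_1(h_\ve^+,h_\ve^-)$ into smallness of $d^\alpha_\ve$, and here the hypothesis $\ve^{\gamma_2}\|f_\ve\|_{L^2}^2\to 0$ is exactly what annihilates the remaining error term, while the choice of the scale~\eqref{R} guarantees the analogous decay $\ve^{\gamma_2}\|g_\ve\|_{L^2}^2\to 0$ for the divergence of the unconstrained recovery sequence.
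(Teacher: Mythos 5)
Your proposal is correct and follows essentially the same strategy as the paper: keep the unconstrained $\Gamma$-liminf (the divergence constraint passes to the limit in $\mathcal{M}_{div}(\Omega)$), and for the limsup add to the Oudet--Santambrogio recovery sequence $\tilde u_\ve$ a corrector, furnished by the Wasserstein comparison of Theorem \ref{comparisonwithwasserstein}, whose divergence is the defect $f_\ve-\nabla\cdot\tilde u_\ve$ (small in $W_1$ and with $\ve^{\gamma_2}\|\cdot\|_{L^2}^2\to 0$), your Cauchy--Schwarz bound on the cross term playing the role of the paper's closing Young-inequality lemma. The one step you only assert by ``inspecting the construction'' --- that the unconstrained recovery sequence can be chosen with $\ve^{\gamma_2}\|\nabla\cdot\tilde u_\ve\|_{L^2}^2\to 0$ --- is precisely the paper's Lemma \ref{lembd}, proved there via the node-corrected mollification of graph measures giving $\|\nabla\cdot u_\ve\|_{L^2}\leq C\ve^{-\gamma}$ (Lemma \ref{limsup}) together with a diagonal argument over energy-dense graph approximations, so your outline coincides with the paper's proof at that point as well.
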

This answers the Open question 1 in \cite{Santambrogio:2010,Oudet:2011} and validates their numerical method.

\section{Mathematical setting.\label{mathematicalsetting}}
\paragraph{The branched transportation energy}
In all what follows, we will use the model proposed by Q. Xia (see \cite{Xia:2003} and \cite{Xia:2004}):

Let $d\geq 1$ be an integer and $\Omega$ be some open and bounded subset of $\R^d$. Let us denote by $\mathcal{M}_{div}(\Omega)$ the set of finite vector measures on $\overline{\Omega}$ such that their divergence is also a finite measure:
$$\mathcal{M}_{div}(\Omega):=\left\{u\text{ measure on }\overline{\Omega}\text{ valued in }\R^d\;:\;\|u\|_{\mathcal{M}_{div}(\Omega)}<+\infty\right\},$$
where $\|u\|_{\mathcal{M}_{div}(\Omega)}:=|u|(\overline{\Omega})+|\nabla\cdot  u|(\overline{\Omega})$ with
$$|u|(\overline{\Omega}):=\sup \left\{ \int_{\overline{\Omega}} \psi\cdot \d u \;:\; \psi\in\c(\overline{\Omega},\,\R^d),\, \|\psi\|_{L^\infty}\leq 1 \right\}$$
and, similarly,
$$|\nabla\cdot  u|(\overline{\Omega}):=\sup \left\{ \int_{\overline{\Omega}} \nabla\varphi \cdot \d u\;:\; \varphi\in\c^1(\overline{\Omega},\R),\, \|\varphi\|_{L^\infty}\leq 1 \right\}.$$
In all what follows, $\nabla\cdot  u$ has to be thought in the weak sense, i.e. $\int\varphi\nabla\cdot  u = -\int \nabla\varphi\cdot\d u$ for all $\varphi \in\c^1(\overline{\Omega})$. Since we do not ask $\varphi$ to vanish at the boundary, $\nabla\cdot  u$ may contain possible parts on $\partial \Omega$ which are equal to $u\cdot n$ when $u$ is smooth, where $n$ is the external unit normal vector to $\partial\Omega$. In other words, $\nabla\cdot  u$ is the weak divergence of $u\ind_\Omega$ in $\R^d$, where $\ind_\Omega$ is the classical indicator function of $\Omega$, equal to $1$ on $\Omega$ and $0$ elsewhere. From now on, the notation $\ind_X$ for the classical indicator function of a set $X$ and $I_X$ for the indicator function in the sense of convex analysis (equal to $1$ inside and $+\infty$ outside) will be used. $\mathcal{M}_{div}(\Omega)$ is endowed with the topology of weak convergence on $u$ and on its divergence: i.e. $u_n\overset{\mathcal{M}_{div}(\Omega)}{\longrightarrow}u$ if $u_n\rightharpoonup u$ and $\nabla\cdot  u_n \rightharpoonup\nabla\cdot  u$ weakly as measures.\\

Given $0<\alpha<1$, the energy of branched transportation can be represented as follows for measures $u\in\mathcal{M}_{div}(\Omega)$:
\begin{equation}\label{defmalpha}
M^\alpha(u)=
\begin{cases} \int_M \theta^\alpha \d\hone &if $u$ can be written as $u = U(M,\theta,\xi)$,\\                                                                +\infty &otherwise,
\end{cases}
\end{equation}
 where $U(M,\theta,\xi)$ is the rectifiable vector measure $u=\theta\xi\cdot\hone_{|M}$ with density $\theta\xi$ with respect to the $\hone-$Hausdorff measure on the rectifiable set $M$. The real multiplicity is a measurable function $\theta: M\to\R^+$ and the orientation $\xi:M\to S^{d-1}\subset\R^d$ is such that $\xi (x)$ is tangential to $M$ for $\hone$-a.e. $x\in M$.
 
Given two probability measures $\mu^+$ and $\mu^-$ on $\overline{\Omega}$, the problem of branched transportation consists in minimizing $M^\alpha$ under the constraint $\nabla\cdot  u=\mu^+-\mu^-$:
\begin{equation}\label{minmalpha}\inf\left\{ M^\alpha (u) \;:\; u\in \mathcal{M}_{div}(\overline{\Omega})\quad\text{and}\quad\nabla\cdot  u=\mu^+-\mu^-\right\}.\end{equation}
Note that, if $\mu^\pm (\partial\Omega)=0$, the divergence constraint implies a Neumann condition on $u$: $u\cdot n=0$ on $\partial\Omega$. 
\paragraph{Functionals $M^\alpha_\ve$} For the minimum value in \eqref{minmalpha} to be finite whatever $\mu^+$ and $\mu^-$ in the set of probability measures, we will require $\alpha$ to be sufficiently close to $1$. More precisely, we make the following assumption:
\begin{equation}\label{irrigationprop}
1-\frac{1}{d}<\alpha<1.
\end{equation}
Q. Xia has shown in \cite{Xia:2003} that, under this assumption, there exists at least one vector measure $u\in\mathcal{M}_{div}(\Omega)$ such that $M^\alpha (u)<+\infty$.\\

We are interested in the following approximation of $M^\alpha$ which was introduced in \cite{Oudet:2011}: for all $u\in\mathcal{M}_{div}(\Omega)$ and for all open subset $\omega\subset \Omega$,
\begin{equation}\label{malphaepsilon}
M^\alpha_\ve(u,\omega):=
\begin{cases}
\ve^{-\gamma_1}\dint_\omega |u(x)|^\beta \d x + \ve^{\gamma_2}\dint_\omega |\nabla u(x)|^2 \d x & if $u\in H^1(\omega)$\\
+\infty & otherwise,
\end{cases}
\end{equation}
where $\beta$, $\gamma_1$ and $\gamma_2$ are three exponents depending on $\alpha$ and $d$ through:
$$\beta=\frac{2-2d+2\alpha d}{3-d+\alpha (d-1)}$$
and
$$
\gamma_1=(d-1)(1-\alpha)\quad\text{and}\quad\gamma_2=3-d+\alpha (d-1).
$$
Note that inequality $1-1/d<\alpha <1$ implies that $0<\beta<1$. When $\omega=\Omega$, we simply write 
$$M^\alpha_\ve (u,\Omega)=:M^\alpha_\ve (u).$$
We point out the 2-dimensional case where $M^\alpha_\ve$ rewrites as 
\begin{equation}\label{malphaepsilon2d}
M^\alpha_\ve(u)=\ve^{\alpha-1}\dint_\Omega |u(x)|^\beta \d x + \ve^{\alpha+1}\dint_\Omega |\nabla u(x)|^2 \d x,
\end{equation}
where $\beta= \frac{4\alpha-2}{\alpha+1}$. 

Given two densities $f^+,f^-\in L^2_+(\Omega):=\{f\in L^2(\Omega)\;:\; f\geq 0\}$ such that $\int f^+=\int f^-$, we are interested in minimizing $M^\alpha_\ve(u)$ under the constraint $\nabla\cdot  u =f^+-f^-$:
\begin{equation}\label{minmalpha}\inf\left\{ M^\alpha_\ve (u) \;:\; u\in H^1(\Omega)\quad\text{and}\quad\nabla\cdot  u=f^+-f^-\right\}.\end{equation}
The classical theory of calculus of variation shows that this infimum is actually a minimum. A natural question that arises is then to understand the limit behavior for minimizers of these problems when $\ve$ goes to $0$. A classical tool to study this kind of problems is the theory of $\Gamma$-convergence which was introduced by De Giorgi in \cite{DeGiorgi:1975}. For the definition and main properties of $\Gamma$-convergence, we refer to \cite{DalMaso:1993} and \cite{Braides:2002}. In particular, if $M^\alpha_\ve$ $\Gamma$-converges to some energy functional $M_0^\alpha$ and if $(u_\ve)$ is a sequence of minimizers for $M^\alpha_\ve$ admitting a subsequence converging to $u$, then, $u$ is a minimizer for $M_0^\alpha$. By construction of $M^\alpha_\ve$, we expect that, up to a subsequence, $M^\alpha_\ve$ $\Gamma$-converges to $c_0M^\alpha$. In the two dimensional case, we have the following $\Gamma$-convergence theorem proved in \cite{Oudet:2011}:
\begin{thm}\label{gammaconvergenceoudet}
Assume that $d=2$ and $\alpha \in (1/2,1)$. Then, there exists a constant $c>0$ such that $(M^\alpha_\ve)_{\ve>0}$ $\Gamma$-converges to $cM^\alpha$ in $\mathcal{M}_{div}(\Omega)$ when $\ve$ goes to $0$.
\end{thm}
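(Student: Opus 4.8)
\emph{Strategy.} Being a $\Gamma$-convergence statement, the proof (following \cite{Oudet:2011}) splits into the $\Gamma$-liminf and the $\Gamma$-limsup inequalities; the constant $c$ emerges, on \emph{both} sides, as the value of one and the same one-dimensional optimal-profile problem (afterwards identified with \eqref{chan}). One works in the two-dimensional setting throughout, where $M^\alpha_\ve$ takes the form \eqref{malphaepsilon2d}: the fact that a curve has codimension one in $\R^2$ is what makes the lower bound accessible.

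\emph{The $\Gamma$-limsup (recovery sequences).} One follows the heuristic of \eqref{heuristic} and \eqref{R}. First reduce to $u=U(M,\theta,\xi)$ \emph{polyhedral}, i.e.\ concentrated on finitely many segments with constant multiplicity on each: such measures are dense in energy for $M^\alpha$ (a standard fact for the Xia model, consequence of its relaxation definition and of the equivalences quoted in the introduction), and $\Gamma$-limsup is stable under such ``dense in energy'' reductions by a diagonal argument; if $M^\alpha(u)=+\infty$ there is nothing to prove. For a single segment $S$ of length $\ell$ carrying mass $m>0$, take the mollification $v_\ve:=u_S\ast\rho_{R_\ve}$ of the vector measure $u_S:=m\,e\,\hone_{|S}$ ($e$ the unit direction of $S$, $\rho_R$ as in \eqref{rhor}, $R_\ve:=\ve^{\gamma}m^{(1-\gamma)/(d-1)}$ with $\gamma$ as in \eqref{defgamma}); then $|v_\ve|\sim mR_\ve^{1-d}$ and $|\nabla v_\ve|\sim mR_\ve^{-d}$ on a tube of width $\sim R_\ve$, and the computation \eqref{heuristic} gives $M^\alpha_\ve(v_\ve)=c(\rho)\,m^\alpha\ell+o(1)$. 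For a general polyhedral $u$ one then glues the tubes (of mutually different widths) at the branch points through transition regions of size $O(R_\ve)$ whose energy is $o(1)$ by the same scaling, producing an $H^1$ field with $v_\ve\to u$ in $\mathcal{M}_{div}(\Omega)$ (the tubes shrink, and the gluing is arranged so that $\nabla\cdot v_\ve\rightharpoonup\nabla\cdot u$) and $\limsup_\ve M^\alpha_\ve(v_\ve)\le c(\rho)\,M^\alpha(u)$. Optimizing the profile $\rho$ (and its internal scale) defines $c:=\inf_\rho c(\rho)$, and after one more diagonal extraction one obtains genuine recovery sequences.

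\emph{The $\Gamma$-liminf (lower bound).} Let $u_\ve\to u$ in $\mathcal{M}_{div}(\Omega)$ with $L:=\liminf_\ve M^\alpha_\ve(u_\ve)<+\infty$; along a subsequence realizing the liminf, the nonnegative measures $\mu_\ve:=\bigl(\ve^{\alpha-1}|u_\ve|^\beta+\ve^{\alpha+1}|\nabla u_\ve|^2\bigr)\,\d x$ converge weakly-$*$ to some finite $\mu$ with $\mu(\Omega)\le L$. The core is a slicing/blow-up argument. For a scalar function $g$ of one variable put $\mathcal E^1_\ve(g):=\ve^{\alpha-1}\int|g|^\beta+\ve^{\alpha+1}\int|g'|^2$. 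Given a line $\ell\subset\R^2$ with unit normal $e$, slicing the two-dimensional energy by the translates of $\ell$ and keeping only the component $u_\ve\cdot e$ together with its derivative along those lines, Fubini yields $M^\alpha_\ve(u_\ve)\ge\int_\R\mathcal E^1_\ve(g_{\ve,t})\,\d t$, where $g_{\ve,t}$ is the restriction of $u_\ve\cdot e$ to the $t$-th translate. One then proves the one-dimensional lower bound: if $g_{\ve,t}(s)\,\d s\rightharpoonup\sum_i m_i\delta_{s_i}$ then $\liminf_\ve\mathcal E^1_\ve(g_{\ve,t})\ge c\sum_i|m_i|^\alpha$, with the \emph{same} constant $c$ as above -- this is the $1$-D Modica--Mortola estimate, where the exponents are tuned so that a single bump of flux $m$ and width $w$ costs $\ve^{\alpha-1}m^\beta w^{1-\beta}+\ve^{\alpha+1}m^2 w^{-3}$, minimal at $w\sim R$ with value $\sim m^\alpha$. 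It remains to recombine the slices: at $\hone$-a.e.\ point $x$ of $M$, blowing up and slicing by the lines orthogonal to the tangent $\xi(x)$ -- for which the limiting bump flux is exactly $\theta(x)$ -- shows that the energy in $B_r(x)$ is at least $2r\,c\,\theta(x)^\alpha+o(r)$, i.e.\ $\frac{\d\mu}{\d\hone_{|M}}\ge c\,\theta^\alpha$ a.e.; hence $L\ge\mu(\Omega)\ge c\int_M\theta^\alpha\,\d\hone=c\,M^\alpha(u)$.

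\emph{Where the difficulty lies.} The lower bound above presupposes that the limit $u$ is rectifiable, $u=U(M,\theta,\xi)$ with $M^\alpha(u)<+\infty$, and this is the genuinely technical point: since $M^\alpha(u)=+\infty$ by definition when $u$ is not of this form, the liminf inequality would otherwise be empty exactly where it must carry content. Rectifiability should again be extracted from slicing: for every direction $e$ and a.e.\ $t$ the one-dimensional restrictions have $\mathcal E^1_\ve$ uniformly bounded, which forces their weak limits to be purely atomic with a uniformly controlled number and total mass of atoms; and a vector measure all of whose slices, in every direction, are atomic with such quantitative control is $1$-rectifiable (a quantitative Besicovitch-type projection statement). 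Making this compactness/structure step rigorous -- and checking that the atoms of the sliced limit are precisely $\theta\,(\xi\cdot e)$ at the points where the slicing line meets $M$, so that the constant comes out sharp rather than off by a dimensional factor -- is the main obstacle; the recovery-sequence direction is routine by comparison, once the polyhedral-density reduction is granted. Finally one identifies $c$ with the minimum value in \eqref{chan}, the intrinsic form of the same scalar profile minimization.
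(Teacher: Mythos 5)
You should first note that this paper does not actually prove Theorem \ref{gammaconvergenceoudet}: it is imported verbatim from \cite{Oudet:2011}, so the only meaningful comparison is with that reference (and with Section \ref{gammacvsection} here, which re-uses the recovery-sequence ingredients). Your outline does follow the same architecture as the cited proof: $\Gamma$-limsup by approximating $u$ in energy by measures on finite graphs and convolving each edge with a scaled kernel, with $o(1)$ corrections at the nodes (compare Lemma \ref{limsup}); $\Gamma$-liminf by slicing the planar energy into one-dimensional functionals of degenerate Cahn--Hilliard type and recombining transversally to the tangent of the limit; and the constant $c$ identified with the value of the scalar profile problem \eqref{hilliard}.

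As a proof, however, the proposal has genuine gaps exactly where the difficulty sits, and they are not merely routine. (i) The sharp one-dimensional lower bound $\liminf_\varepsilon \mathcal{E}^1_\varepsilon(g_{\varepsilon,t})\geq c\sum_i |m_i|^\alpha$ is not obtained from the single-bump scaling computation; it is the $\Gamma$-liminf for the degenerate potential $|v|^\beta$ studied by Bouchitt\'e--Dubs--Seppecher \cite{Bouchitte:1996,Dubs:1998}, and it must be cited or proved (including the compactness forcing the sliced limits to be atomic). (ii) The structure step is misnamed and unproved: what guarantees that a limit $u$ with bounded sliced energies is of the form $U(M,\theta,\xi)$ is not a ``quantitative Besicovitch projection'' fact but a rectifiability-by-slices criterion for flat chains/normal $1$-currents (White's theorem, or equivalently the rectifiability of finite-$M^\alpha$ measures from \cite{Xia:2004}); moreover, passing from weak-$*$ convergence $u_\varepsilon\to u$ in $\mathcal{M}_{div}(\Omega)$ to convergence of almost every slice uses the divergence bound (flat-norm type arguments), and the sharpness of the atom masses $\theta\,(\xi\cdot e)$ at crossing points is part of this work; you flag all of this as ``the main obstacle'' but do not close it. (iii) The constant: setting $c=\inf_\rho c(\rho)$ in the limsup does not by itself match the liminf constant; the identification requires knowing that the optimal profile $w$ of Lemma \ref{chan} is the transverse projection of an admissible radial kernel, which is precisely the inverse-projection construction \eqref{invproj} of Section \ref{gammacvsection} (and uses the monotonicity of $w$). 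In short, your text is a faithful road map of \cite{Oudet:2011}, but the 1D sharp bound, the slicing/rectifiability theorem, and the kernel--profile identification are asserted rather than established, so it does not yet constitute a proof.
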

Nevertheless, this does not imply the $\Gamma$-convergence of $M^\alpha_\ve (u)+I_{\nabla\cdot  u=f^+-f^-}$ to 
$M^\alpha_\ve (u)+ I_{\nabla\cdot  u=f^+-f^-}$. Indeed, the $\Gamma$-convergence is stable under the addition of continuous functionals but not l.s.c. functionals. Consequently, we cannot deduce, from this theorem, the behavior of minimizers for \eqref{minmalpha}. For instance, it is not clear that there exists a recovery sequence $(u_\ve)$, i.e. $u_\ve$ converges to $u$ in $\mathcal{M}_{div}(\Omega)$ and $M^\alpha_\ve (u_\ve)$ converges to $M^\alpha (u)$ as $\ve\to 0$, with prescribed divergence $\nabla\cdot u_\ve =f^+-f^-$. To this aim, we require some estimates on these energies and this is the purpose of this paper. 
\paragraph{Distance of branched transportation}
We remind our hypothesis $1-1/d<\alpha<1$. In \cite{Xia:2003}, Q. Xia has remarked that, as in optimal transportation theory, $M^\alpha$ induces a distance $d^\alpha$ on the space $\mathcal{P}(\overline{\Omega})$ of probability measures on $\overline{\Omega}$:
$$d^\alpha (\mu^+,\mu^-)=\inf\left\{M^\alpha(u)\;:\; u\in\mathcal{M}_{div}(\Omega)\quad\text{such that}\quad\nabla \cdot u=\mu^+-\mu^-\right\},$$
for all $\mu^+,\mu^- \in\mathcal{P}(\overline{\Omega})$. Thanks to our assumption $\alpha>1-1/d$, $d^\alpha$ is finite for all $\mu^\pm\in \mathcal{P}(\overline{\Omega})$ and it induces a distance on the set $\mathcal{P}(\overline{\Omega})$ which metrizes the topology of weak convergence of measures. Actually, $d^\alpha$ has a stronger property which is a comparison with the Wasserstein distance:

\begin{prop} \label{comparisonprop} Let $\mu^+$ and $\mu^-$ be two probability measures on $\overline{\Omega}$. We denote by $W_{p}$ the Wasserstein distance associated to the cost $(x,y)\to |x-y|^p$ for $p\geq 1$. Then, one has
$$W_{1/\alpha}(\mu^+,\mu^-)\leq d^\alpha (\mu^+,\mu^-)\leq C\; W_{1}(\mu^+,\mu^-)^{1-d(1-\alpha)},$$
for a constant $C>0$ only depending on $d$, $\alpha$ and the diameter of $\Omega$.
\end{prop}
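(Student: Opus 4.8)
The plan is to establish the two inequalities separately, the lower bound by a duality/slicing argument and the upper bound by an explicit construction that reduces to the dyadic approximation of measures.

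For the lower bound $W_{1/\alpha}(\mu^+,\mu^-)\le d^\alpha(\mu^+,\mu^-)$, I would work with an admissible $u=U(M,\theta,\xi)$ with $\nabla\cdot u=\mu^+-\mu^-$ and $M^\alpha(u)<+\infty$ (if none exists the inequality is vacuous, but our assumption $\alpha>1-1/d$ guarantees one does). The key point is that such a $u$ can be decomposed along ``fibers'': by the structure theory for such vector measures (as in \cite{Bernot:2009}), $u$ can be written as a superposition of curves $\gamma$ transporting infinitesimal mass, i.e. there is a measure on the space of Lipschitz paths whose ``marginals'' are $\mu^-$ and $\mu^+$, and along which the mass flowing satisfies the concavity inequality that makes $M^\alpha$ at least the $1/\alpha$-transport cost. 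Concretely, if a mass-$m$ particle travels a Euclidean distance $\ell$ then it contributes at least $m^\alpha\ell$ to the energy; since along any fiber the multiplicity seen is at least $m$ on a set of length at least $\ell$, Hölder's inequality with exponents $1/\alpha$ and $1/(1-\alpha)$ applied to the relation $\int m^\alpha\,ds\ge (\int m\,ds)^\alpha(\cdot)^{1-\alpha}$ converts the $M^\alpha$ energy into a bound on $\int m\,\ell^{1/\alpha}$ and hence on $W_{1/\alpha}^{1/\alpha}$. This is classical and due essentially to Xia; I would cite it and give only the short computation.

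For the upper bound $d^\alpha(\mu^+,\mu^-)\le C\,W_1(\mu^+,\mu^-)^{1-d(1-\alpha)}$, the plan is the standard dyadic construction. Fix an optimal transport plan $\pi$ for $W_1$ between $\mu^+$ and $\mu^-$ and dyadically decompose $\overline\Omega$ into cubes of side $2^{-k}\mathrm{diam}(\Omega)$. At scale $k$ one replaces $\mu^\pm$ by their averages on the cubes of generation $k$; moving from generation $k$ to generation $k+1$ requires connecting the center of each generation-$k$ cube to the centers of its $2^d$ children, carrying a mass $\le\mu^\pm(\text{cube})$ over a length $\lesssim 2^{-k}\mathrm{diam}(\Omega)$. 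Summing $\sum_i m_i^\alpha\ell_i$ over one generation, one uses concavity of $t\mapsto t^\alpha$ (so $\sum m_i^\alpha\le N^{1-\alpha}(\sum m_i)^\alpha$ with $N$ the number of cubes of that generation, $N\simeq 2^{dk}$) together with $\sum_i m_i\le 1$ to get a per-generation cost $\lesssim 2^{dk(1-\alpha)}2^{-k}\mathrm{diam}(\Omega)=\mathrm{diam}(\Omega)\,2^{-k(1-d(1-\alpha))}$. Since $1-d(1-\alpha)>0$ by our hypothesis $\alpha>1-1/d$, this geometric series converges. One stops the dyadic refinement at the scale $2^{-k_0}\sim W_1(\mu^+,\mu^-)^{?}$ where the ``unmatched'' mass still to be transported (controlled by $W_1$) contributes comparably; optimizing $k_0$ against the residual term of size $\lesssim 2^{k_0}W_1(\mu^+,\mu^-)$ (mass $1$ over the plan $\pi$ within cubes of that size) yields the exponent $1-d(1-\alpha)$ after balancing. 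Finally one checks $d^\alpha$ satisfies a triangle-type inequality (it is subadditive, being an infimum over vector measures with additive divergence and $M^\alpha$ subadditive under concatenation), so the sum of the generation costs plus the residual bounds $d^\alpha(\mu^+,\mu^-)$.

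The main obstacle I expect is getting the \emph{sharp} exponent $1-d(1-\alpha)$ rather than something weaker: a naive dyadic bound without optimizing the stopping scale, or without exploiting that the transport plan keeps most mass within small cubes once $k$ exceeds $\log_2(1/W_1)$, gives only a bound in terms of $\mathrm{diam}(\Omega)$ with no $W_1$-dependence at all. The trick is to run the dyadic scheme not on $\mu^\pm$ in isolation but on the coupling $\pi$, so that at scale $2^{-k_0}$ comparable to $W_1$ the two measures have been made to agree on most cubes and only a mass of order $W_1/2^{-k_0}$ (in a suitable sense) remains to be matched over length $2^{-k_0}$; then one balances $\mathrm{diam}(\Omega)\sum_{k\le k_0}2^{-k(1-d(1-\alpha))}\lesssim \mathrm{diam}(\Omega)\,2^{-k_0(1-d(1-\alpha))}$... — wait, that last series is dominated by its \emph{first} term, so one rather keeps \emph{all} generations up to $\infty$ for the part of the mass already matched and pays the residual $W_1$-term only for the mismatch; carefully bookkeeping these two contributions and choosing $k_0$ to equalize them is where the exponent $1-d(1-\alpha)$ emerges, and that bookkeeping is the delicate part of the argument.
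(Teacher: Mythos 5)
You should first note that the paper does not prove this proposition at all: it is quoted from the literature (\cite{Morel:2007}, see also \cite{Bernot:2009}, \cite{Brasco:2011}), and the only place where the argument actually appears in the paper is the proof of Theorem \ref{comparisonwithwasserstein}, which is explicitly an adaptation of the Morel--Santambrogio scheme to $d^\alpha_\ve$. Deferring the lower bound $W_{1/\alpha}\le d^\alpha$ to the literature is therefore acceptable, but the sketch you give of it is not correct as stated: the energy $M^\alpha$ of a superposition of fibers is \emph{smaller} than the sum of the individual fiber energies (subadditivity of $t\mapsto t^\alpha$ on the overlaps), so you cannot lower-bound $M^\alpha(u)$ by $\sum_i m_i^\alpha\ell_i$ over a path decomposition; moreover the H\"older inequality you invoke bounds $\int m^\alpha\,\d s$ from \emph{above} by $(\int m\,\d s)^\alpha(\cdot)^{1-\alpha}$, which is the wrong direction for this inequality.

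The genuine gap is in the upper bound, precisely at the point you call ``the delicate bookkeeping''. Your construction uses a single stopping scale $2^{-k_0}$: the matched part costs $\lesssim 2^{-k_0\nu}$ with $\nu=1-d(1-\alpha)$, and the mismatched mass, controlled via Markov by $\lesssim 2^{k_0}W_1$, is then paid separately. But this residual mass must in general travel distances of order $\op{diam}(\Omega)$, not $2^{-k_0}$ (your claim that it is ``matched over length $2^{-k_0}$'' is unjustified), so its cost is $\lesssim(2^{k_0}W_1)^\alpha\op{diam}(\Omega)$; optimizing $t=2^{-k_0}$ in $t^\nu+(W_1/t)^\alpha$ gives the exponent $\frac{\alpha\nu}{\alpha+\nu}$, which is strictly smaller than the stated $\nu$ (and with your suggested choice $2^{-k_0}\sim W_1$ the Markov bound on the residual mass is of order $1$, so nothing is gained at all). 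The sharp exponent requires a multi-scale treatment of the plan itself, as in \cite{Morel:2007} and in the proof of Theorem \ref{comparisonwithwasserstein}: split the optimal plan $\Pi$ into classes $X_j=\{d_j\le|x-y|<d_{j+1}\}$ with $d_j\sim 2^jw$, and for \emph{each} $j$ partition $\Omega$ into cubes $Q_{jk}$ of side $d_{j+1}$, so that the mass $\theta_{jk}$ of each piece is both irrigated (local estimate of Proposition \ref{localestimatedalpha}) and transported at cost $\lesssim\theta_{jk}^\alpha d_{j+1}$, i.e.\ over a length comparable to its actual transport distance; then Jensen in $k$, H\"older in $j$ together with $\sum_j\theta_jd_j\le W_1$, and the choice $w\sim W_1$ for the smallest scale yield exactly $C\,W_1^{1-d(1-\alpha)}$. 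Since this multi-scale decomposition is exactly what your proposal omits (and what you acknowledge not having carried out), the argument as written does not reach the claimed exponent.
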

\noindent We refer to \cite{Morel:2007} for a proof of this property (see also \cite{Bernot:2009}, and \cite{Brasco:2011} for an alternative proof) and \cite{Villani:2003}, \cite{Santambrogio:2015} for the definition and main properties of the Wasserstein distance. In the same way, we define $d^\alpha_\ve$ as follows:
\begin{equation}\label{defdalphaeps}
d^\alpha_\ve (f^+,f^-)=\inf\left\{M^\alpha_\ve(u)\;:\; u\in H^1(\R^d)\quad\text{such that}\quad\nabla \cdot u=f^+-f^-\right\},
\end{equation}
where $f^+,f^-\in L^2_+(\Omega)$ satisfy $\int_\Omega f^+=\int_\Omega f^-$. Although $d^\alpha$ is a distance, it is not the case for $d^\alpha_\ve$ which does not satisfy the triangular inequality. Actually, because of the second term involving $|\nabla u|^2$, $M_{\ve}^{\alpha}$ is not subadditive. However, for $u_1,\dots , u_n$ in $\mathcal{M}_{div}(\Omega)$, the inequality $|\nabla u_1+\dots +\nabla u_n|^2\leq n\{|\nabla u_1|^2+\dots+|\nabla u_n|^2\}$ implies
\begin{equation*}\label{notsubadditive}
M^{\alpha}_{\ve}\left(\displaystyle\sum_{i=1}^{n}u_i\right)\leq n\sum_{i=1}^{n}M^{\alpha}_{\ve}(u_i).
\end{equation*}
In particular, $d^\alpha_\ve$ is a pseudo-distance in the sense that the three properties in the following proposition are satisfied:
\begin{prop}\label{propnotsubadditive}
Let $f^+$, $f^-$ and $f_1$,\dots, $f_n$ be $L^2$ densities, i.e. $L^2$ non negative functions whose integral is equal to 1. Then one has
\begin{enumerate}
\item $d^{\alpha}_{\ve}(f^+,f^-)=0$ implies $f^+=f^-$,
\item $d^{\alpha}_{\ve}(f^+,f^-)=d^{\alpha}_{\ve}(f^-,f^+)$,
\item $d^{\alpha}_{\ve}(f_0,f_{n})\leq n\;\big[ d^{\alpha}_{\ve}(f_0,f_1)+d^{\alpha}_{\ve}(f_1,f_2)+\dots+d^{\alpha}_{\ve}(f_{n-1},f_{n})\big]$.
\end{enumerate}
\end{prop}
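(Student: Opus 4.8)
The plan is to treat the three assertions separately: items $2$ and $3$ are soft consequences of the definition of $d^\alpha_\ve$ as an infimum together with the (super)additivity properties of $M^\alpha_\ve$, whereas item $1$ is the only one requiring a genuine compactness argument. For the symmetry in item $2$, note that $u\in H^1(\R^d)$ satisfies $\nabla\cdot u=f^+-f^-$ if and only if $-u\in H^1(\R^d)$ satisfies $\nabla\cdot(-u)=f^--f^+$, and that $M^\alpha_\ve(-u)=M^\alpha_\ve(u)$ since the two integrands depend on $u$ only through $|u|^\beta$ and $|\nabla u|^2$; hence $u\mapsto-u$ is an energy-preserving bijection between the two admissible sets, so the two infima coincide.

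For item $3$ there is nothing to prove if some $d^\alpha_\ve(f_{i-1},f_i)=+\infty$, so assume all of them are finite. Given $\delta>0$, choose $u_i\in H^1(\R^d)$ with $\nabla\cdot u_i=f_{i-1}-f_i$ and $M^\alpha_\ve(u_i)\le d^\alpha_\ve(f_{i-1},f_i)+\delta$, and set $u:=\sum_{i=1}^n u_i\in H^1(\R^d)$. Telescoping gives $\nabla\cdot u=f_0-f_n$, so $u$ competes in the problem defining $d^\alpha_\ve(f_0,f_n)$, and the inequality $M^\alpha_\ve(\sum_{i=1}^n u_i)\le n\sum_{i=1}^n M^\alpha_\ve(u_i)$ stated just before the proposition yields
\[
d^\alpha_\ve(f_0,f_n)\le M^\alpha_\ve(u)\le n\sum_{i=1}^n M^\alpha_\ve(u_i)\le n\sum_{i=1}^n d^\alpha_\ve(f_{i-1},f_i)+n^2\delta .
\]
Letting $\delta\to0$ concludes.

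It remains to prove item $1$. Assume $d^\alpha_\ve(f^+,f^-)=0$ and pick $u_k\in H^1(\R^d)$ with $\nabla\cdot u_k=f^+-f^-$ and $M^\alpha_\ve(u_k)\to0$; then $\int_\Omega|u_k|^\beta\to0$ and $\int_\Omega|\nabla u_k|^2\to0$. The first limit only gives $u_k\to0$ in measure on $\Omega$, which, since $\beta<1$, is too weak to pass to the limit in the divergence constraint, so the Dirichlet term must be used as well. By the Poincar\'e--Wirtinger inequality on $\Omega$ (applied on each connected component if $\Omega$ is not connected) one gets $\|u_k-m_k\|_{L^2(\Omega)}\to0$, where $m_k:=|\Omega|^{-1}\int_\Omega u_k\in\R^d$. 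One then checks that $m_k\to0$: if $|m_k|\ge c>0$ along a subsequence, Chebyshev's inequality shows that the set $\{x\in\Omega:\ |u_k(x)-m_k|>c/2\}$ has Lebesgue measure at most $4c^{-2}\|u_k-m_k\|_{L^2(\Omega)}^2\to0$, hence $|u_k|\ge c/2$ on a subset of $\Omega$ of measure at least $|\Omega|/2$ for $k$ large, contradicting $\int_\Omega|u_k|^\beta\to0$. Therefore $u_k\to0$ in $L^2(\Omega)$, and for every $\varphi\in\c^\infty_c(\Omega)$,
\[
\int_\Omega\varphi\,(f^+-f^-)\,\d x=-\int_{\R^d}u_k\cdot\nabla\varphi\,\d x=-\int_\Omega u_k\cdot\nabla\varphi\,\d x\underset{k\to\infty}{\longrightarrow}0,
\]
so $\int_\Omega\varphi\,(f^+-f^-)\,\d x=0$ for all such $\varphi$, i.e. $f^+=f^-$ a.e.\ on $\Omega$.

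The main (indeed the only) obstacle is this last point: the potential part of $M^\alpha_\ve$ controls only an $L^\beta$-type quantity with $\beta<1$, and hence by itself yields mere convergence in measure, which does not suffice to take limits in the weak form of the constraint $\nabla\cdot u=f^+-f^-$; it is the Dirichlet term, through a Poincar\'e-type estimate (so some mild regularity of $\Omega$ enters), that upgrades this to strong $L^2$ convergence. Alternatively, invoking that the infimum defining $d^\alpha_\ve$ is attained, one sees directly that a minimizer $u$ with $M^\alpha_\ve(u)=0$ must vanish a.e.\ on $\Omega$, and testing $\nabla\cdot u=f^+-f^-$ against $\c^\infty_c(\Omega)$ then gives $f^+=f^-$ without any compactness argument.
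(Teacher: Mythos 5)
The paper gives no written proof of this proposition: item 3 is exactly the displayed inequality $M^\alpha_\ve\bigl(\sum_i u_i\bigr)\leq n\sum_i M^\alpha_\ve(u_i)$ applied to near-optimal competitors, and items 1--2 are left to the reader, so your treatment of items 2 and 3 coincides with the intended argument and is correct (the telescoping of the divergence constraint and the $\delta\to 0$ limit are exactly what is needed). The added value of your proposal is item 1, and there the only weak point is the Poincar\'e--Wirtinger step: you apply it on $\Omega$ (or its connected components), but the paper only assumes $\Omega$ open and bounded, and a general bounded open set need not support a Poincar\'e--Wirtinger inequality (it may have cusps or infinitely many components with degenerating constants), so as written this step uses a regularity hypothesis the paper does not make --- you flag this yourself but do not remove it. The repair is cheap and worth making explicit: since the test function $\varphi$ has compact support in $\Omega$, cover $\op{supp}\varphi$ by finitely many balls $B_j\subset\subset\Omega$ and take a partition of unity $\varphi=\sum_j\varphi_j$ with $\varphi_j\in\c^\infty_c(B_j)$; on each ball Poincar\'e--Wirtinger holds unconditionally, your Chebyshev argument kills the means $\fint_{B_j}u_k$, and one concludes $\int_{B_j}u_k\cdot\nabla\varphi_j\to 0$ for each $j$, hence $f^+=f^-$ with no assumption on $\Omega$. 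Your alternative argument via attainment of the infimum is also fine in spirit, but note that the paper's assertion that the infimum is a minimum concerns the formulation over $H^1(\Omega)$, whereas $d^\alpha_\ve$ in \eqref{defdalphaeps} minimizes over $H^1(\R^d)$ with an energy that gives no control outside $\Omega$, so attainment there is not literally the quoted statement; either restrict to the $H^1(\Omega)$ formulation or use the localized Poincar\'e argument above.
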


\section{Local estimate\label{localestimatesection}}
We remind our assumption \eqref{irrigationprop} which insures that $d^\alpha(\mu^+,\mu^-)$ is always finite. Our goal is to prove that $d^\alpha_\ve$ enjoys a property similar to the following one.
\begin{prop}\label{localestimatedalpha}
Let $Q_0=(0,L)^d\subset\R^d$ be a cube of side length $L>0$. There exists some constant $C>0$ only depending on $d$ and $\alpha$ such that for all non negative Borel finite measure $\mu$ of total mass $\theta>0$,
$$d^{\alpha}(\mu,\theta\delta_{0})\leq C\ \theta^{\alpha} L,$$
where $\delta_{0}$ is the Dirac measure at the point $c_{Q_0}$, the center of $Q_0$.
\end{prop}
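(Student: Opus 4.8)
The plan is to reduce the general estimate $d^\alpha(\mu,\theta\delta_0)\le C\theta^\alpha L$ to the case of finitely many Dirac masses via a dyadic decomposition of the cube $Q_0$, and then to sum the costs of the elementary ``merging'' steps using the concavity of $t\mapsto t^\alpha$. First I would observe that it suffices to treat $\mu$ a finite sum of Dirac masses: a general finite Borel measure $\mu$ can be approximated in the weak-$*$ topology by atomic measures $\mu_n$ with the same total mass $\theta$, and since $d^\alpha$ metrizes weak convergence on $\mathcal P(\overline{Q_0})$ (after normalizing by $\theta$) and $M^\alpha$ is lower semicontinuous, passing to the limit preserves the bound, provided the constant $C$ is uniform in $n$. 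So the core of the argument is: for any $\theta_1,\dots,\theta_N\ge 0$ with $\sum\theta_i=\theta$ and any points $x_1,\dots,x_N\in Q_0$, build an admissible $u$ with $\nabla\cdot u=\sum_i\theta_i\delta_{x_i}-\theta\delta_{c_{Q_0}}$ and $M^\alpha(u)\le C\theta^\alpha L$.

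The key construction is a hierarchical (dyadic) one. Subdivide $Q_0$ into $2^d$ subcubes of side $L/2$, then each of those into $2^d$ subcubes of side $L/4$, and so on, for $k=0,1,2,\dots$ generations. At generation $k$ route all the mass sitting in a given subcube $Q$ of side $L/2^k$ to the center of $Q$; then connect, inside the parent cube of side $L/2^{k-1}$, the $2^d$ centers of its children to its own center. Each such connecting segment has length $O(L/2^k)$ and carries a mass $\theta_Q\le\theta$, contributing $\theta_Q^\alpha\cdot O(L/2^k)$ to $M^\alpha$. Summing over the (at most) $2^{dk}$ subcubes of generation $k$ and using concavity in the form $\sum_{Q}\theta_Q^\alpha\le (2^{dk})^{1-\alpha}\bigl(\sum_Q\theta_Q\bigr)^\alpha=2^{dk(1-\alpha)}\theta^\alpha$ (by Jensen, since the $\theta_Q$ partition $\theta$ within each generation), the generation-$k$ cost is bounded by $2^{dk(1-\alpha)}\theta^\alpha\cdot C\,L/2^k=C\theta^\alpha L\,2^{k(d(1-\alpha)-1)}$. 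Because assumption \eqref{irrigationprop} gives $\alpha>1-1/d$, i.e. $d(1-\alpha)-1<0$, this is a convergent geometric series in $k$, and summing over all generations yields $\sum_{k\ge 0}$-cost $\le C'\theta^\alpha L$ with $C'$ depending only on $d$ and $\alpha$. Finitely many atoms means only finitely many generations are actually needed (once each subcube contains at most one $x_i$), so the construction terminates and produces a genuine element of $\mathcal M_{div}$.

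The main obstacle is making the geometric-series step fully rigorous when the atoms are very unevenly distributed: a naive bound would use $2^{dk}$ subcubes at every generation, but many of them are empty, so one must be a little careful to apply Jensen's inequality only to the nonempty subcubes (there are at most $N$ of them) and check that this does not spoil the exponent — in fact emptiness only helps. A secondary technical point is the approximation argument reducing a general $\mu$ to atomic measures: one should either invoke directly that $d^\alpha$ is continuous for weak-$*$ convergence (cited via Proposition~\ref{comparisonprop}, since $W_1$-convergence follows from weak-$*$ convergence on a bounded set) together with lower semicontinuity of $M^\alpha$, or alternatively first prove the estimate for $\mu$ with a bounded density and then for general $\mu$ by truncation; either route is routine once the atomic case is in hand. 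I expect no difficulty from the boundary of $Q_0$ since all constructed measures are supported in the closed cube and the divergence is exactly the prescribed combination of Diracs.
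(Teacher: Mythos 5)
Your proposal is correct and follows essentially the same route as the paper: Xia's dyadic construction connecting the centers of children cubes to the center of their parent, with the per-generation cost summed via the concavity/Jensen estimate (Lemma \ref{malphafinite}) into a geometric series convergent precisely because $\alpha>1-1/d$. Your preliminary reduction to finitely many atoms is a harmless but unnecessary detour, since the dyadic construction can be run over all generations directly for an arbitrary finite measure, the infinite series converging for the same reason.
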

Since $d^\alpha_\ve (f^+,f^-)$ is only defined on $L^2$ functions $f^\pm$, to do so, we first have to replace $\theta \delta_0$ by some kernel which concentrates at the origin when $\ve$ goes to $0$. Let $\rho\in\c_c^1(B,\R^+)$ be a radial non negative function such that $\int_{\R^d}\rho=1$, where $B\subset\R^d$ is the unit ball centered at the origin, and define $\rho_{\theta,\ve}:=\rho_R$ as in \eqref{rhor}, where
\begin{equation*}\label{rthetaepsilon}R=:R_{\theta,\ve}=\ve^\gamma\theta^{\frac{1-\gamma}{d-1}}.\end{equation*}
Here, we recall that $R$ and $\gamma=\frac{\gamma_2}{d+1}$ were introduced in \eqref{defgamma}. Let $Q$ be a cube in $\R^d$ centered at some point $c_Q\in\R^d$ and $f\in L_+^2(Q)$ be a density such that $\int_Q f=:\theta_Q$. Then, we will denote by $\rho_Q$ the kernel $\theta \rho_{\theta,\ve}$ refocused at $c_Q$ with a small abuse of notation (indeed, $\rho_Q$ also depends on $f$):
$$\rho_Q(x)=\theta_Q\rho_{\theta_Q,\ve}(x-c_Q).$$
The main result of this section is the following theorem
\begin{thm}[Local estimate]\label{localestimate}
Let us set $Q_0=(0,L)^d$ for some $L>0$. There exists $C>0$ only depending on $\alpha$, $\rho$ and $d$ such that for all $f\in L_+^2(Q_0)$ with $\int_{Q_0}f=\theta$, we have 
\begin{itemize}
\item If $\op{supp}\rho_{Q_0}\subset Q_0$ then, there exists $u\in H^1_0(Q_0)$ such that $\nabla\cdot  u=f-\rho_{Q_0}$ and
$$d^{\alpha}_{\ve}(f,\rho_{Q_0})\leq M^\alpha_\ve (u)\leq C\big\{\theta^\alpha L
+\ve^{\gamma_2} \|f\|_{L^2}^2\big\}\quad\text{and}\quad\|u\|_{L^1}\leq C\,L\, \theta.$$
\item Otherwise, there exists $u\in H^1_0(\widetilde{Q}_0)$ such that
$$d^\alpha_\ve(f,\rho_{Q_0})\leq M^\alpha_\ve (u)\leq C\ve^{\gamma_2} \|f\|_{L^2}^2\quad\text{and}\quad\|u\|_{L^1}\leq C\,L\, \theta,$$
where $\widetilde{Q}_0=2\op{supp}\rho_{Q_0}:=B(c_{Q_0}, 2 R_{\theta,\ve})$.
\end{itemize}
\end{thm}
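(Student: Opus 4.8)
The plan is to mimic, in the $H^1$ setting, the dyadic construction behind Proposition~\ref{localestimatedalpha}, replacing each segment of the branching pattern by a ``tube'' obtained by convolution with a mollifier whose width is adjusted to the transported mass through the optimal scaling \eqref{R}. Subdivide $Q_0$ into dyadic subcubes; to a subcube $Q$ attach its mass $\theta_Q=\int_Q f$ and the refocused profile $\rho_Q$, and keep subdividing $Q$ as long as $R_{\theta_Q,\ve}$ stays below a fixed dimensional fraction of $\operatorname{side}(Q)$, calling $Q$ a \emph{leaf} otherwise. Since $R_{\theta_Q,\ve}\le R_{\theta,\ve}$ is bounded while $\operatorname{side}(Q)\to0$, every branch stops, the tree is finite, the leaves are pairwise disjoint and cover $\operatorname{supp}f$; moreover any leaf other than $Q_0$ inherits from its non-leaf parent the bound $R_{\theta_Q,\ve}\le C\operatorname{side}(Q)$, so on every such leaf $R_{\theta_Q,\ve}\simeq\operatorname{side}(Q)$.

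I would then build $u$ by recursion over this tree. For a leaf $Q$, let $u_Q$ be a Bogovskii-type solution of $\nabla\cdot u_Q=f\ind_Q-\rho_Q=:g_Q$ supported in a ball of radius $\simeq R_{\theta_Q,\ve}$ around $c_Q$; scale-invariant estimates give $\|\nabla u_Q\|_{L^2}\le C\|g_Q\|_{L^2}$, $\|u_Q\|_{L^2}\le CR_{\theta_Q,\ve}\|g_Q\|_{L^2}$, and $\|u_Q\|_{L^1}\le C\operatorname{side}(Q)\|g_Q\|_{L^1}$ (the Bogovskii operator being bounded on $L^1$, with the diameter as operator norm). Using $\|\rho_Q\|_{L^2}\le C\|f\|_{L^2(Q)}$ (Cauchy--Schwarz together with $R_{\theta_Q,\ve}\simeq\operatorname{side}(Q)$), $\|g_Q\|_{L^1}\le2\theta_Q$, and the algebraic identities $\gamma_1+\gamma_2=2$ and $\gamma\,(2d-\beta(d-1))=2$ (from \eqref{defgamma}), one checks that $\ve^{-\gamma_1}\int|u_Q|^\beta$ is absorbed by $\ve^{\gamma_2}\int|\nabla u_Q|^2$, so that $M^\alpha_\ve(u_Q)\le C\ve^{\gamma_2}\|f\|_{L^2(Q)}^2$ and $\|u_Q\|_{L^1}\le C\operatorname{side}(Q)\,\theta_Q$. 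For an internal node $Q$ with children $Q_1,\dots,Q_{2^d}$, given $u_{Q_i}$ with $\nabla\cdot u_{Q_i}=f\ind_{Q_i}-\rho_{Q_i}$, set $u_Q:=\sum_i u_{Q_i}+w_Q$ with $\nabla\cdot w_Q=\sum_i\rho_{Q_i}-\rho_Q$, where $w_Q$ is obtained by convolving the oriented segments from the $c_{Q_i}$ to $c_Q$ (carrying the masses $\theta_{Q_i}$) with mollifiers whose widths interpolate between the $R_{\theta_{Q_i},\ve}$ and $R_{\theta_Q,\ve}$. The heuristic \eqref{heuristic} with the optimal radius gives $M^\alpha_\ve(w_Q)\le C\sum_i\theta_{Q_i}^\alpha\operatorname{side}(Q)+(\text{error})$ and $\|w_Q\|_{L^1}\le C\operatorname{side}(Q)\,\theta_Q$. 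The key geometric point is to arrange each $w_Q$ to live in the ``cross'' of $Q$ minus the slightly shrunk children, and each $u_{Q_i}$ inside the corresponding shrunk child, so that across the whole tree the supports of all these elementary pieces overlap at most a dimensional number $C_d$ of times.

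For the final estimate, subadditivity of $t\mapsto t^\beta$ ($\beta<1$) handles the potential term and the $C_d$-bounded overlap handles the Dirichlet term, giving $M^\alpha_\ve(u)\le C_d\sum_Q M^\alpha_\ve(\text{piece}_Q)\le C\sum_{\text{internal }Q}\sum_i\theta_{Q_i}^\alpha\operatorname{side}(Q)+C\ve^{\gamma_2}\sum_{\text{leaves }Q}\|f\|_{L^2(Q)}^2+C\sum_Q(\text{error})$. The first sum equals $2\sum_{Q\ne Q_0}\theta_Q^\alpha\operatorname{side}(Q)$, which is $\le C\theta^\alpha L$ by the concavity estimate $\sum_{Q\text{ at scale }k}\theta_Q^\alpha\le2^{dk(1-\alpha)}\theta^\alpha$ summed against $L2^{-k}$ using $\alpha>1-1/d$, exactly as in Proposition~\ref{localestimatedalpha}; the second equals $\ve^{\gamma_2}\|f\|_{L^2(Q_0)}^2$ since the leaves are disjoint; and the errors are shown to sum to at most $C\ve^{\gamma_2}\|f\|_{L^2}^2$, again by the exponent identities. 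Likewise $\|u\|_{L^1}\le\sum_Q\|\text{piece}_Q\|_{L^1}\le C\sum_Q\operatorname{side}(Q)\theta_Q\le CL\theta$ by a geometric series (no restriction on $\alpha$ needed here). That $u=u_{Q_0}\in H^1_0(Q_0)$ when $\operatorname{supp}\rho_{Q_0}\subset Q_0$ follows from the nested support design; and when $\operatorname{supp}\rho_{Q_0}\not\subset Q_0$, the cube $Q_0$ is itself a leaf, so the leaf construction applied to $Q_0$ directly yields $u\in H^1_0(\widetilde Q_0)$ with $M^\alpha_\ve(u)\le C\ve^{\gamma_2}\|f\|_{L^2}^2$ and $\|u\|_{L^1}\le CL\theta$, which is the second bullet.

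The main work is in the middle step: producing, at each branching point, a genuinely $H^1$ vector field $w_Q$ with the prescribed divergence whose Dirichlet energy obeys the heuristic bound despite the change of tube width, while simultaneously keeping the supports of the infinitely many multi-scale pieces from overlapping more than a dimensional number of times --- without this last point the inequality for $\int|\nabla u|^2$ would cost the total number of pieces rather than a constant, and the estimate would be lost. Closely related, and also delicate, is the verification that all the width-interpolation ``errors'' genuinely sum to $O(\ve^{\gamma_2}\|f\|_{L^2}^2)$, which is precisely where the specific values of $\beta,\gamma_1,\gamma_2,\gamma$ are used.
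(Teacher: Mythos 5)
Your overall architecture (dyadic tree with a mass-adapted stopping scale, tubes obtained by convolving each edge with a kernel of width $R_{\theta_Q,\ve}$, node corrections, a Dirichlet/Bogovskii-type diffusion at the leaves, and the concavity sum $\sum_Q\theta_Q^\alpha\op{diam}(Q)\leq C\theta^\alpha L$) is the same as the paper's. But the step you yourself identify as ``the main work'' --- getting the Dirichlet term under control by arranging that all the multi-scale pieces overlap at most $C_d$ times --- is exactly where the proposal has a genuine gap, and I do not see how your arrangement can exist. First, the piece $w_Q$ must have divergence $\sum_i\rho_{Q_i}-\rho_Q$, and $\rho_{Q_i}$ is supported in a ball of radius $R_{Q_i}$ centered at $c_{Q_i}$, i.e.\ deep inside the (shrunk) child; so $w_Q$ cannot live in ``$Q$ minus the slightly shrunk children'' --- it must penetrate to the child's center, where the child's own pieces (the tubes toward the grandchildren and the child's node correction) also live. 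Second, and more seriously, near cubes where $R_Q$ is comparable to $\op{side}(Q)$ the parent tube has width comparable to the whole cube and therefore covers the pieces of descendants of arbitrarily many generations (the subtree below such a cube need not be shallow, since the children's masses, hence their $R$'s, can drop abruptly). In general a point $x$ lies in the supports of the tubes of all cubes of the nested chain containing $x$, so the overlap multiplicity is of the order of the depth of the tree, not a dimensional constant; with only bounded overlap as a tool, the $\int|\nabla u|^2$ estimate picks up the depth as a factor and the bound $\theta^\alpha L$ is lost. This is precisely the difficulty the paper's proof is built around: it does \emph{not} use disjointness or bounded overlap, but expands $\bigl|\nabla\sum_Q V_Q\bigr|^2$ into cross terms, observes that only nested pairs $Q'\subset Q$ contribute, bounds each such term by $\|\nabla V_Q\|_{L^\infty}\|\nabla V_{Q'}\|_{L^1}$ (an $L^\infty$--$L^1$ pairing; the paper explicitly notes that the natural $L^2$--$L^2$ Cauchy--Schwarz estimate does not yield $\theta^\alpha L$), and then sums over the descent using Lemma \ref{malphafinite} with the exponent $1-\frac{1-\gamma}{d-1}\in(1-1/d,1]$. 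Your proposal contains no substitute for this mechanism, and the deferred claim that ``the errors sum to $O(\ve^{\gamma_2}\|f\|_{L^2}^2)$'' is likewise left entirely unproved.

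Two secondary points. Your finiteness argument for the tree is not correct: on branches where $\theta_Q\to0$ (e.g.\ $f\equiv0$ on a subcube) the subdivision criterion $R_{\theta_Q,\ve}<c\,\op{side}(Q)$ never fails, so the tree can be infinite; the paper handles this by replacing $f$ with $f+\eta$ (which forces a uniform lower bound on the side length of cubes in $\mathcal{D}(f)$, see \eqref{estimationofj}) and letting $\eta\to0$. Also, your leaves are cubes whose kernel does \emph{not} fit inside, and you solve the leaf problem on a ball of radius $\simeq R_{\theta_Q,\ve}$ around $c_Q$; such balls stick out of the leaf, which both creates overlaps between neighboring leaves of very different scales and can exit $Q_0$ for leaves touching $\partial Q_0$, contradicting the claimed $u\in H^1_0(Q_0)$ in the first bullet. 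The paper stops one step earlier (the leaves are the minimal cubes of $\mathcal{D}(f)$, on which the kernel still fits), so the leaf corrections lie in $H^1_0(Q_i)$ with the $Q_i$ pairwise disjoint, and no such boundary or overlap issue arises.
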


\begin{rmk}
The Dirichlet term, $\ve^{\gamma_2} \|f\|_{L^2}^2$, in the estimates above is easily understandable. Indeed, if $\ve$ is very large so that one can get rid of the first term in the energy $M^\alpha_\ve$, then, one can use a classical Dirichlet type estimate, that is Theorem \ref{dirichlet} below. On the contrary, for $\ve$ very small, the $\Gamma$-limit result on $M^{\alpha}_{\ve}$ tells us that these energies are close to $M^{\alpha}$ so that it is natural to hope a similar estimate as the one for $M^{\alpha}$: that is to say an estimate from above by $\theta^{\alpha} L$ (see \cite{Bernot:2009}).
\end{rmk}

The main difficulty to prove Theorem \ref{localestimate} is the non subadditivity of the pseudo-distances $\lambda_\ve ^{\alpha}$. Indeed, our proof is based on a dyadic construction used by Q. Xia in \cite{Xia:2003} to prove Proposition \ref{localestimatedalpha} (see also \cite{Bernot:2009}). This gives a singular vector measure $u$ which is concentrated on a graph. Since $M^\alpha_\ve$ contains a term involving the $L^2$ norm of $\nabla u$, we have to regularize $u$ by taking a convolution with the kernel $\rho_{\theta,\ve}$ on each branch of the graph ($\theta$ being the mass traveling on it). Unfortunately in this way, two different branches are no longer disjoints.

It is useful to see that we have a first candidate for the minimization problem \eqref{defdalphaeps}. This candidate is of the form $u=\nabla \phi$, where $\phi$ is the solution of the Dirichlet problem
\begin{equation}\label{dirichletPB}\left\{\begin{array}{lcll}
\Delta \phi &=&f^+-f^-&\text{in}\quad Q,\\
\phi&=&0 &\text{on}\quad\partial Q.
\end{array}\right.
\end{equation}
Then $u=\nabla\phi$ satisfies $\nabla\cdot u=f^+-f^-$ in $Q$ and $u(x)\in\R \, n \,$ a.e. on $\partial Q$ where $n$ stands for the external unit normal vector to $\partial Q$. Alternatively, one could consider Neumann homogeneous boundary conditions for $\phi$ rather than Dirichlet boundary conditions. Then, one would obtain $u(x)\cdot n=0$ a.e. on $\partial Q$. Theorem \ref{dirichlet} below gives a better result in the sense that the candidate $u$ vanishes at the boundary:

\begin{thm}\label{dirichlet}
Let $Q_0=(0,L)^d$ be a cube of side length $L>0$. There exists $C>0$ only depending on $d$ such that for all $ f\in L_0^2(Q_0)$, there exists $u\in H^1_0(Q_0,\R^2)$ solving $\nabla\cdot  u=f$ and satisfying $\|u\|_{L^1(Q_0)} \leq C L\,\|f\|_{L^1(Q_0)}$ together with
$$\|u\|_{H_0^1(Q_0)}:=\left(\dint_{Q_0}|\nabla u|^2\right)^{1/2}\leq C\,\|f\|_{L^2(Q_0)},$$
where $L_0^2(Q_0)=\left\{f\in L^2(Q_0)\;:\;\int_{Q_0}f(x)\d x=0\right\}$. 
\end{thm}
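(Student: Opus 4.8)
The plan is to read Theorem~\ref{dirichlet} as a scale-tracked instance of the classical Bogovski\u\i\ construction of a right inverse of the divergence, and to reduce everything to the unit cube by a dilation. First I would normalise the side length: given $f\in L^2_0(Q_0)$ with $Q_0=(0,L)^d$, set $\tilde f(y):=L\,f(Ly)$ on $Q_1:=(0,1)^d$, so that $\tilde f\in L^2_0(Q_1)$; if $v\in H^1_0(Q_1,\R^d)$ solves $\nabla\cdot v=\tilde f$ then $u(x):=v(x/L)$ lies in $H^1_0(Q_0,\R^d)$ and solves $\nabla\cdot u=f$. A change of variables gives $\|u\|_{L^1(Q_0)}=L^{d}\|v\|_{L^1(Q_1)}$, $\|\tilde f\|_{L^1(Q_1)}=L^{1-d}\|f\|_{L^1(Q_0)}$, $\int_{Q_0}|\nabla u|^2=L^{d-2}\int_{Q_1}|\nabla v|^2$ and $\|\tilde f\|_{L^2(Q_1)}^2=L^{2-d}\|f\|_{L^2(Q_0)}^2$; hence estimates $\|v\|_{L^1(Q_1)}\le C\|\tilde f\|_{L^1(Q_1)}$ and $\|\nabla v\|_{L^2(Q_1)}\le C\|\tilde f\|_{L^2(Q_1)}$ on the fixed cube $Q_1$ translate into exactly the claimed bounds on $Q_0$ with the same ($d$-dependent) constant --- the gradient bound being scale invariant and the $L^1$ bound producing the factor $L$.

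On $Q_1$ I would use the Bogovski\u\i\ operator. The cube $Q_1$ is star-shaped with respect to the ball $B$ of radius $1/2$ about its centre; fixing once and for all a radial $w\in\c^\infty_c(B)$ with $\int w=1$, set, for $f\in L^2_0(Q_1)$,
$$u(x)=(\mathcal B f)(x):=\int_{Q_1} f(y)\,G(x,y)\,\d y,\qquad G(x,y):=\frac{x-y}{|x-y|^{d}}\int_{|x-y|}^{\infty} w\!\left(y+r\,\tfrac{x-y}{|x-y|}\right) r^{d-1}\,\d r .$$
The properties I would invoke are the standard ones: $\mathcal B$ maps $\c^\infty_c(Q_1)\cap L^2_0$ into $\c^\infty_c(Q_1,\R^d)$ and, by density, $L^2_0(Q_1)$ into $H^1_0(Q_1,\R^d)$; one has $\nabla\cdot(\mathcal B f)=f$; and $\|\nabla\mathcal B f\|_{L^p(Q_1)}\le C_{p,d}\|f\|_{L^p(Q_1)}$ for every $1<p<\infty$, in particular for $p=2$. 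This last inequality --- the only nonelementary ingredient --- is the Calder\'on--Zygmund estimate for the matrix-valued singular kernel $\nabla_x G(x,y)$, and I would simply quote it (Bogovski\u\i; see e.g. Galdi's monograph or Acosta--Dur\'an).

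For the $L^1$ bound one does not differentiate: the kernel $G$ itself is only weakly singular. Since $w$ is bounded and supported in $B$, for fixed $y$ and direction $\omega=(x-y)/|x-y|$ the inner $r$-integral is carried by a set of $r$'s of length $O(1)$ on which $r^{d-1}=O(1)$, so $|G(x,y)|\le C_d\,|x-y|^{1-d}$ on $Q_1\times Q_1$. Hence, by Fubini and $\int_{Q_1}|x-y|^{1-d}\,\d x\le\int_{|z|\le\sqrt d}|z|^{1-d}\,\d z=C_d$, one gets $\|u\|_{L^1(Q_1)}\le C_d\|f\|_{L^1(Q_1)}$. Note this controls $\|u\|_{L^1}$ only, not $\|\nabla u\|_{L^1}$, consistently with the well-known failure of $\mathcal B:L^1\to W^{1,1}$. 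Combining this with the reduction of the first paragraph finishes the proof.

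I do not expect a genuine obstacle here: the one substantial point is quoting the $L^2_0\to H^1_0$ continuity of the Bogovski\u\i\ operator, which rests on Calder\'on--Zygmund theory. If a self-contained argument were preferred, one could replace $\mathcal B$ by an elementary iterated ``slicing'' construction of a right inverse of $\nabla\cdot$ on a rectangle with $W^{1,2}_0$ and $L^1$ bounds, but invoking Bogovski\u\i\ is cleaner and immediately gives the sharp scaling.
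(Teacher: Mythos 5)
Your argument is correct, but it follows a genuinely different route from the paper's. The paper does not reprove the statement at all: it simply invokes Theorem 2 of Bourgain and Brezis \cite{Bourgain:2003}, which already gives the $H^1_0$ solution of $\nabla\cdot u=f$ with $\|u\|_{H^1_0}\leq C\|f\|_{L^2}$ on a cube, and merely remarks that the additional bound $\|u\|_{L^1}\leq C\,L\,\|f\|_{L^1}$ can be obtained by following their proof. You instead rescale to the unit cube (your bookkeeping is right: the gradient estimate is scale invariant and the $L^1$ estimate picks up exactly one factor of $L$) and take $u=\mathcal{B}f$ for the Bogovskii operator associated with a fixed ball with respect to which the cube is star-shaped; the $L^2_0\to H^1_0$ bound is the classical Calder\'on--Zygmund estimate for $\nabla_x G$, which you quote, while the $L^1$ bound follows from the elementary pointwise estimate $|G(x,y)|\leq C_d|x-y|^{1-d}$ and Fubini. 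Both routes ultimately rest on a quoted result, but they are different results (Bourgain--Brezis versus Bogovskii), and yours has two concrete advantages: the explicit kernel makes the $L^1$ estimate --- precisely the part the paper asserts without detail --- completely transparent, and the dilation argument isolates the $L$-dependence cleanly; the paper's citation is shorter and its variant for Lipschitz domains (Theorem 2' of \cite{Bourgain:2003}) covers the more general domains mentioned right after the statement. Two minor points to tidy up: when extending $\mathcal{B}$ from $\c^\infty_c\cap L^2_0$ to $L^2_0$ by density, note that the $H^1_0$-limit and the $L^1$-limit of $\mathcal{B}f_n$ coincide (both converge in the sense of distributions), so that both estimates and the identity $\nabla\cdot u=f$ pass to the limit; and the ``$\R^2$'' in the statement is a typo of the paper for $\R^d$, which your construction correctly delivers.
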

For a proof of this theorem, see, for instance, Theorem 2 in \cite{Bourgain:2003}: the only difference with Theorem \ref{dirichlet} is that we add the estimate $\|u\|_{L^1(Q_0)} \leq C L\,\|f\|_{L^1(Q_0)}$ which can be easily obtained following the proof of J. Bourgain and H. Brezis. The corresponding property formulated on a Lipschitz bounded connected domain $\Omega$ is also true (see Theorem 2' in \cite{Bourgain:2003}) except that the constant $C$ could depend on $\Omega$ in this case.

Of course, this candidate is usually not optimal for \eqref{defdalphaeps} and this does not allow for a good estimate because of the first term in the definition of $M^{\alpha}_{\ve}$. For this reason, we have to use the dyadic construction of Q. Xia up to a certain level (``diffusion level'') from which we simply use Theorem \ref{dirichlet}. 

\subsection{Dyadic decomposition of $Q_0$ and ``diffusion level'' associated to f}
Let us call ``dyadic descent'' of $Q_0=(0,L)^d$ the set $\mathcal{Q}=\bigcup_{j\geq 0}\mathcal{Q}_j$, where $\mathcal{Q}_j$ is the $j^{th}$ ``dyadic generation'':
$$\mathcal{Q}_j=\left\{(x_1,\dotsc,x_d)+ 2^{-j}Q_0  \;:\; x_i\in\{k2^{-j}L\;:\; 0\leq k\leq2^j-1\}\quad\text{for}\quad i=1,\dotsc,d\right\}.$$
Note that $\op{Card}(\mathcal{Q}_j)=2^{jd}$. For each $Q\in\mathcal{Q}$, let us define
\begin{itemize}
\item $\mathcal{D}(Q)$: the descent of $Q$, the family of all dyadic cubes contained in $Q$. 
\item $\mathcal{A}(Q)$: the ancestry of $Q$, the family of all dyadic cubes containing $Q$. 
\item $\mathcal{C}(Q)$: the family of children of $Q$ composed of the $2^d$ biggest dyadic cubes strictly included in $Q$.
\item $F(Q)$: the father of $Q$, the smallest dyadic cube strictly containing $Q$.
\end{itemize}
We now remind the dyadic construction described in \cite{Xia:2003} which irrigates $f$ from a point source. We first introduce some notations: fix a function $f\in L^2_+(Q_0)$ with integral $\theta$ and let $Q\in\mathcal{Q}$ be a dyadic cube centered at $c_Q\in \R^d$. Then we introduce $\theta_Q$ the mass associated to the cube $Q$ as
$$\theta_Q=\int_Q f\quad .$$
If $\theta_Q\neq 0$, we also define the kernel associated to $Q$ through
\begin{equation}\label{defrhoqbar}\overline{\rho}_Q(x)=\rho_R(x),\end{equation}
where $\rho_R$ is defined in \eqref{rhor} for
\begin{equation*}\label{rq}R=R_Q := \ve^\gamma \theta_Q^\frac{1-\gamma}{d-1}\quad ,\quad \gamma=\frac{\gamma_2}{d+1}\quad .\end{equation*}
Here $\gamma$ was defined in Define also the weighted recentered kernel by
\begin{equation}\label{defrhoq}\rho_Q(x)=\theta_Q\overline{\rho}_Q(x-c_Q)\end{equation}
if $\theta_Q\neq 0$ and $\rho_Q(x)=0$ otherwise. Lastly, we introduce the point source associated to the cube $Q$ as
$$\mathcal{S}_Q:=\theta_Q \times \text{Dirac measure at point } c_Q.$$
We are now able to construct a vector measure $X$ such that $M^\alpha (X)<+\infty$. First define the measures $X_Q$ as below:
\begin{equation}\label{muq}
X_Q=\sum_{Q'\in\mathcal{C}(Q)} \theta_{Q'}\; n_{Q'}\;\hone_{|[c_Q,c_{Q'}]},
\end{equation}
where $\displaystyle n_{Q'}=\frac{c_{Q'}-c_{Q}}{\| c_{Q'}-c_{Q}\|}$. Then, we have
$$\nabla\cdot X_Q=\displaystyle\sum_{Q'\in\mathcal{C}(Q)} \mathcal{S}_{Q'}-\mathcal{S}_{Q}$$
and the energy estimate
$$M^\alpha(X_Q)\leq 2^{d-2}  \theta_Q^\alpha \op{diam}(Q),$$
where $\op{diam}(Q)$ stands for the diameter of $Q$. Finally, the measure $X=\sum_{Q\in\mathcal{Q}}X_Q$ solves $\nabla\cdot  X =f-\mathcal{S}_{Q_0}$ and satisfies
$$M^\alpha (X)\leq C\theta^\alpha \op{diam}(Q_0).$$
Indeed, it is enough to apply the following lemma with $\lambda=\alpha$:
\begin{lem}\label{malphafinite}
Let $Q\in \mathcal{Q}$ and $\lambda\in ]1-1/d,1]$. There exists a constant $C=C(\lambda, d)$ such that
$$\sum_{Q'\in \mathcal{D}(Q)} \theta_{Q'}^\lambda \op{diam}(Q')\leq C \theta_Q^\lambda \op{diam}(Q).$$
\end{lem}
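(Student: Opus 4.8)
The plan is to organize the descent $\mathcal{D}(Q)$ by dyadic generation and to exploit the concavity of $t\mapsto t^\lambda$. If $\theta_Q=0$ then every descendant of $Q$ has zero mass and both sides vanish, so assume $\theta_Q>0$. Fix $Q\in\mathcal{Q}$ and, for $j\geq 0$, let $\mathcal{D}_j(Q)$ denote the cubes of $\mathcal{D}(Q)$ lying exactly $j$ generations below $Q$, so that $\mathcal{D}_0(Q)=\{Q\}$ and $\mathcal{D}(Q)=\bigcup_{j\geq 0}\mathcal{D}_j(Q)$. Each $Q'\in\mathcal{D}_j(Q)$ satisfies $\op{diam}(Q')=2^{-j}\op{diam}(Q)$, there are exactly $2^{jd}$ of them, and since the children of a dyadic cube partition it up to a Lebesgue-null set, the masses satisfy $\sum_{Q'\in\mathcal{D}_j(Q)}\theta_{Q'}=\theta_Q$.

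Next I would apply the elementary inequality $\sum_{i=1}^{N}a_i^\lambda\leq N^{1-\lambda}\big(\sum_{i=1}^{N}a_i\big)^\lambda$, valid for $a_i\geq 0$ and $0<\lambda\leq 1$ (Jensen's inequality applied to the concave function $t\mapsto t^\lambda$), with $N=2^{jd}$ and the masses $(\theta_{Q'})_{Q'\in\mathcal{D}_j(Q)}$. This gives $\sum_{Q'\in\mathcal{D}_j(Q)}\theta_{Q'}^\lambda\leq 2^{jd(1-\lambda)}\theta_Q^\lambda$, and therefore
$$\sum_{Q'\in\mathcal{D}_j(Q)}\theta_{Q'}^\lambda\op{diam}(Q')\leq \op{diam}(Q)\,\theta_Q^\lambda\,2^{j\left(d(1-\lambda)-1\right)}.$$

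Finally I would sum over $j\geq 0$, obtaining a geometric series of ratio $2^{d(1-\lambda)-1}$. This series converges precisely because the hypothesis $\lambda>1-\tfrac1d$ is equivalent to $d(1-\lambda)-1<0$, whence
$$\sum_{Q'\in\mathcal{D}(Q)}\theta_{Q'}^\lambda\op{diam}(Q')\leq \frac{\op{diam}(Q)\,\theta_Q^\lambda}{1-2^{d(1-\lambda)-1}},$$
which is the claim with $C(\lambda,d)=\big(1-2^{d(1-\lambda)-1}\big)^{-1}$. There is no genuine obstacle in this argument; the only point worth stressing is that the restriction $\lambda>1-\tfrac1d$ — the very assumption that guarantees finiteness of $d^\alpha$ — is exactly what makes the geometric series summable, and it is sharp.
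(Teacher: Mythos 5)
Your proof is correct and is essentially the same as the paper's: both decompose $\mathcal{D}(Q)$ by dyadic generation, apply Jensen's inequality (concavity of $t\mapsto t^\lambda$) to the $2^{jd}$ masses of generation $j$, and sum the resulting geometric series of ratio $2^{d-1-\lambda d}$, which converges exactly because $\lambda>1-\tfrac1d$. The only cosmetic difference is that you note the generation masses sum exactly to $\theta_Q$, whereas only the inequality $\sum_{Q'\in\mathcal{D}_j(Q)}\theta_{Q'}\leq\theta_Q$ is needed.
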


\begin{proof}
Let $j_0\geq 0$ be such that $Q\in\mathcal{Q}_{j_0}$. The definition of $\mathcal{D}(Q)$, the Jensen inequality and the fact that $d-1-\lambda d <0$ give
\begin{align*}
\dsum_{Q'\in \mathcal{D}(Q)} \theta_{Q'}^\lambda \op{diam}(Q')&=\displaystyle\sum_{j\geq 0}2^{-j}\op{diam}(Q)\displaystyle\sum_{Q'\in\mathcal{D}(Q)\cap\mathcal{Q}_{j_0+j}}\theta_{Q'}^{\lambda}\\
&\leq  \op{diam}(Q) \dsum_{j\geq 0}2^{-j} 2^{jd} \left( 2^{-jd} \sum_{Q'\in\mathcal{D}(Q)\cap\mathcal{Q}_{j_0+j}}\theta_{Q'}\right)^{\lambda}\\
&\leq  \theta_Q^\lambda\op{diam}(Q) \dsum_{j\geq 0}2^{j(d-1-\lambda d)}\\
&\leq C\theta_Q^{\lambda} \op{diam}(Q).\qedhere
\end{align*}
\end{proof}
Now, the idea is to replace each term in \eqref{muq} by its convolution with the kernel $\overline{\rho}_{Q'}$. Unfortunately, this will make appear extra divergence terms around each node. We have to modify $X$ so as to make this extra divergence vanish using, for instance, Theorem \ref{dirichlet}. Furthermore, we cannot follow the construction for all generations $j\geq 1$, otherwise the ``enlarged edges'' (convolution of a dyadic edge and the kernel $\rho_{\theta,\ve}$) may overlap. This is the reason why we introduce the following definition: 
\begin{deft}[``Diffusion level'']\label{diffstate} For a cube $Q_0$ and $f\in L^2_+(Q_0)$ we define the set $\mathcal{D}(Q_0,f)$ or $\mathcal{D}(f) \subset\mathcal{Q}$ as the maximal element for the inclusion in the set 
$$\Lambda=\left\{D\subset\mathcal{Q}\;:\;  \forall Q\in D,\; \mathcal{A}(Q)\cup\mathcal{C}(F(Q))\subset D\quad\text{and}\quad\op{supp}\rho_Q\subset Q\right\}.$$
If $\Lambda=\emptyset $, that is $\op{supp}\rho_{Q_0}\nsubseteq Q_0$, we take the convention $\mathcal{D}(f)=\emptyset$. For all $x\in Q_0$, define also the ``generation index'' of $x$ associated to $f$ as
$$j(f,x)=\max\left\{j\;:\; \exists Q\in \mathcal{D}(f)\cap\mathcal{Q}_j,\; x\in Q\right\}\in \N\cup\{\pm\infty\},$$
where the convention $\max (\emptyset )=-\infty$ has been used.
\end{deft}
In this way, each cube in $\mathcal{D}(f)$ contains the support of its associated kernel. Moreover, if $Q$ is an element of $\mathcal{D}(f)$, then all its ancestry and its brothers (i.e. elements of the set $\mathcal{C}(F(Q))$) are elements of $\mathcal{D}(f)$. $\mathcal{D}(f)$ can be constructed by induction as follows: first take $j=0$ and $\mathcal{D}(f)=\emptyset$. If $\op{supp}\rho_{Q_0}\subset Q_0$ then add $Q_0$ to the set $\mathcal{D}(f)$ and $j$ is replaced by $j+1$. For all cubes $Q$ in $\Lambda\cap\mathcal{Q}_{j-1}$: if all cubes $Q'\in \mathcal{C}(Q)\subset\mathcal{Q}_j$ are such that their associated kernels are supported on $Q'$ then $\mathcal{D}(f)$ is replaced by $\mathcal{D}(f)\cup\mathcal{C}(Q)$. If $\mathcal{D}(f)$ has been changed at this stage $j$ is replaced by $j+1$ and the preceding step is reiterated. This process is repeated for $j\geq 1$ until it fails.

Let $\mathcal{D}_{min}(f)$ be the set of all cubes in $\mathcal{D}(f)$ which are minimal for the inclusion. If $\mathcal{D}_{min}(f)\neq \emptyset$, we also define
$$D(f)=\displaystyle\bigcup_{Q\in\mathcal{D}_{min}(f)}Q\quad .$$
Note that this is actually a disjoint union: two distinct cubes in $\mathcal{D}_{min}(f)$ are  disjoint. Indeed, for $Q$, $Q'\in\mathcal{D}_{min}(f)\subset\mathcal{Q}$, either $Q\cap Q'=\emptyset$ or $Q$ and $Q'$ are comparable: $Q\subset Q'$ or $Q'\subset Q$. In the last case, since $Q$ and $Q'$ are minimal, we deduce that $Q=Q'$.

Moreover, it is not difficult to see that, if $\mathcal{D}_{min}(f)\neq \emptyset$, then $D(f)=\{ x\in Q_0\;:\; j(f,x)\text{ is finite} \}$ and also that $\overline{f}(x)=0$ whenever $j(f,x)=+\infty$, where $\overline{f}$ is the precise representative of $f$ (i.e. the limit of the mean values of $f$ on small cubes). Indeed, assume that $Q\in\mathcal{D}(f)$ is a cube of side length $L_Q$. Then, by definition, $\op{supp}\rho_Q\subset Q$ and for some constant $C$ depending on $\rho$ and for $\nu=\frac{1-\gamma}{d-1}$, one has $\ve^\gamma\theta_Q^\nu\leq CL_Q$ and so
$$\fint_Q f:=L_Q^{-d}\theta_Q\leq \ve^{-\gamma/\nu}L_Q^{1/\nu-d}.$$
Since $1/\nu-d=\frac{(d-1)(\alpha d-d+1)}{d+1}$ is positive, we deduce that $L_Q$ cannot be arbitrarily small if there exists $x\in Q$ such that $\overline{f}(x)>0$. Moreover, if $f(x)\geq \eta$ a.e. for some $\eta>0$, then there exists some constant $C_\eta>0$ depending on $\eta$, $\ve$, $d$ and $\alpha$ such that 
\begin{equation}\label{estimationofj}
\forall Q\in \mathcal{D}(f),\, L_Q\geq C_\eta.
\end{equation}
In particular, one can deduce that $\mathcal{D}_{min}(f)=\emptyset$ if and only if $\mathcal{D}(f)=\emptyset$ or $f(x)=0$ a.e. Indeed, if $\mathcal{D}(f)=\emptyset$, then it is clear that $\mathcal{D}_{min}(f)=\emptyset$. Conversely, assume that $\mathcal{D}_{min}(f)\neq\emptyset$ (i.e. $Q_0\in\mathcal{D}(f)$) and that there exists $x\in Q_0$ such that $\overline{f}(x)>0$. Since $\bigcup_{Q\in \mathcal{D}(f)} \partial Q$ is negligible for the Lebesgue measure, one can assume that $x\in \bigcup_{Q\in \mathcal{D}(f)} Q$. Then $0\leq j(f,x)<+\infty$ and so there exists a minimal cube $Q\in\mathcal{D}(f)$ containing $x$. Then $Q\in\mathcal{D}_{min}(f)$. Indeed, if $Q'\in\mathcal{D}(f)$ and $Q'\subsetneq Q$, then $\mathcal{A}(Q)\subset \mathcal{D}(f)$ and there exists $Q''\in \mathcal{A}(Q)$ such that $Q''\subsetneq Q$ and $x\in Q''$ which is a contradiction.

We are now able to define two approximations of $f$ which are useful for our problem. The first is a dyadic approximation of $f$ by an atomic measure,
\begin{equation*}
\Lambda_{\ve}f=
\begin{cases}
\displaystyle\sum_{Q\in\mathcal{D}_{min}(f)}\mathcal{S}_Q&if $\mathcal{D}_{min}(f)\neq\emptyset$,\\
\mathcal{S}_{Q_0}&otherwise,
\end{cases}
\end{equation*}
where we recall the definition of $\mathcal{S}_Q:=\theta_Q \delta_{c_Q}$. We also define an approximation in $H^1(Q_0)$,
\begin{equation*}\label{diffusionstate}
\lambda_{\ve}f=
\begin{cases}
\dsum_{Q\in\mathcal{D}_{min}(f)}\rho_Q&if $\mathcal{D}_{min}(f)\neq\emptyset$,\\
\rho_{Q_0}&otherwise,
\end{cases}
\end{equation*}
where $\rho_Q$ is defined in \eqref{defrhoq}. The following result shows in which sense $\lambda_\ve f$ is an approximation of $f$ and justifies the term ``diffusion level''. Indeed, this proposition indicates that we get a good estimate by using a local diffusion from $\lambda_\ve  f$ to $f$, i.e. minimizing $\int_Q |\nabla u|^2$ over the constraint $\nabla\cdot u=\lambda_\ve  f-f$ for all $Q\in\mathcal{D}_{min}(f)$ (see Theorem \ref{dirichlet}).
\begin{prop}\label{distdiffstate}
There exists a constant $C>0$ depending on $d$ and $\rho$ such that for all $f\in L^2_+(Q_0)$,
$$
 d^{\alpha}_{\ve}(\lambda_\ve f,f)+d^{\alpha}(\Lambda_\ve f,f)\leq C\;\ve^{\gamma_2}\|f\|_{L^2(Q_0)}^2 .
$$
More precisely, if $\op{supp}\rho_{Q_0}\subset Q_0$, there exists $u\in H^1_0 (Q_0)$ such that $\nabla \cdot u=f-\lambda_\ve  f$ as well as
$$
M_\ve^\alpha(u)\leq C\, \ve^{\gamma_2}\|f\|^2_{L^2}\quad \text{and}\quad \|u\|_{L^1}\leq C\, \op{diam}(Q_0)\|f\|_{L^1}.
$$
If $\op{supp}\rho_{Q_0}\nsubseteq Q_0$ the same estimates hold but the condition $u\in H^1_0(Q_0)$ has to be replaced by $u\in H^1_0 (\widetilde{Q}_0)$, where $\widetilde{Q}_0$ is a cube containing $Q_0$ and $\op{supp}\rho_{Q_0}$.
\end{prop}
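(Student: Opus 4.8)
The plan is to reduce the statement to a Dirichlet-type problem on each cube of $\mathcal{D}_{min}(f)$ and then sum the local contributions, the whole point being that these cubes are pairwise disjoint. Assume first $\mathcal{D}_{min}(f)\neq\emptyset$, i.e. $Q_0\in\mathcal{D}(f)$ and $f\not\equiv 0$; the cases $f\equiv 0$ a.e. and $\op{supp}\rho_{Q_0}\nsubseteq Q_0$ are treated at the end. For $Q\in\mathcal{D}_{min}(f)$, of side $L_Q$, one has $\int_Q(f-\rho_Q)=\theta_Q-\theta_Q=0$ and $\op{supp}\rho_Q\subset Q$, so Theorem~\ref{dirichlet} on $Q$ provides $u_Q\in H^1_0(Q)$ with $\nabla\cdot u_Q=f\ind_Q-\rho_Q$, $\|\nabla u_Q\|_{L^2(Q)}\leq C\|f-\rho_Q\|_{L^2(Q)}$ and $\|u_Q\|_{L^1(Q)}\leq CL_Q\|f-\rho_Q\|_{L^1(Q)}$. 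Since $\overline f=0$ on $Q_0\setminus D(f)$, extending each $u_Q$ by $0$ and setting $u=\sum_{Q\in\mathcal{D}_{min}(f)}u_Q$ gives $u\in H^1_0(Q_0)$ with $\nabla\cdot u=f-\lambda_\ve f$, and all norms of $u$ split as sums over $Q$. The $L^1$ bound is then immediate from $\|f-\rho_Q\|_{L^1(Q)}\leq\|f\|_{L^1(Q)}+\theta_Q=2\theta_Q$, $L_Q\leq L$ and $\sum_Q\theta_Q=\sum_Q\int_Q f\leq\|f\|_{L^1}$.

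The key point is a geometric fact about the diffusion level: for every $Q\in\mathcal{D}_{min}(f)$ one has $L_Q<4R_Q$, where $R_Q=\ve^\gamma\theta_Q^\nu$ with $\nu=\frac{1-\gamma}{d-1}=\frac{2-\alpha}{d+1}$; that is, the regularisation scale is comparable to the side of the cube. Indeed, since $\mathcal{D}(f)$ is maximal in $\Lambda$ and stable under taking siblings, minimality of $Q$ means that $\mathcal{D}(f)\cup\mathcal{C}(Q)$ leaves $\Lambda$, which can only happen because some child $Q'\in\mathcal{C}(Q)$ has $\op{supp}\rho_{Q'}\nsubseteq Q'$, i.e. $R_{Q'}>L_{Q'}/2=L_Q/4$; as $\nu>0$ and $\theta_{Q'}\leq\theta_Q$, this forces $R_Q\geq R_{Q'}>L_Q/4$. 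Combined with Jensen's inequality $\|f\|_{L^2(Q)}^2\geq\theta_Q^2L_Q^{-d}$, this controls the $L^2$ size of the regularising kernel: $\|\rho_Q\|_{L^2}^2=\|\rho\|_{L^2}^2\,\theta_Q^2R_Q^{-d}\leq 4^d\|\rho\|_{L^2}^2\,\theta_Q^2L_Q^{-d}\leq 4^d\|\rho\|_{L^2}^2\|f\|_{L^2(Q)}^2$, hence $\|f-\rho_Q\|_{L^2(Q)}\leq C\|f\|_{L^2(Q)}$. Summing over the disjoint cubes gives $\ve^{\gamma_2}\|\nabla u\|_{L^2}^2\leq C\ve^{\gamma_2}\sum_Q\|f\|_{L^2(Q)}^2\leq C\ve^{\gamma_2}\|f\|_{L^2(Q_0)}^2$.

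To bound $\ve^{-\gamma_1}\int_{Q_0}|u|^\beta=\ve^{-\gamma_1}\sum_Q\int_Q|u_Q|^\beta$ I would, on each cube, combine Hölder ($0<\beta<2$), the Poincaré inequality on $H^1_0(Q)$ (constant $\sim L_Q$) and Theorem~\ref{dirichlet}: $\int_Q|u_Q|^\beta\leq L_Q^{d(2-\beta)/2}\|u_Q\|_{L^2(Q)}^\beta\leq C\,L_Q^{\frac{d(2-\beta)}{2}+\beta}\|f\|_{L^2(Q)}^\beta$; using once more $\|f\|_{L^2(Q)}^{2-\beta}\geq\theta_Q^{2-\beta}L_Q^{-d(2-\beta)/2}$, it is enough to check $L_Q^{d(2-\beta)+\beta}\leq C\,\ve^{\gamma_1+\gamma_2}\theta_Q^{2-\beta}$. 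This is where the specific values of the exponents come in: a direct computation gives $2-\beta=\frac{2(2-\alpha)}{\gamma_2}$, $d(2-\beta)+\beta=\frac{2(d+1)}{\gamma_2}$, hence $\gamma\bigl(d(2-\beta)+\beta\bigr)=2=\gamma_1+\gamma_2$ and $\nu\bigl(d(2-\beta)+\beta\bigr)=2-\beta$; therefore, by $L_Q<4R_Q=4\ve^\gamma\theta_Q^\nu$,
$$L_Q^{d(2-\beta)+\beta}<4^{\,d(2-\beta)+\beta}\,\ve^{\gamma(d(2-\beta)+\beta)}\theta_Q^{\nu(d(2-\beta)+\beta)}=C\,\ve^{\gamma_1+\gamma_2}\theta_Q^{2-\beta},$$
exactly as wanted. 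Summing over the disjoint $Q$'s gives $\ve^{-\gamma_1}\int_{Q_0}|u|^\beta\leq C\ve^{\gamma_2}\|f\|_{L^2(Q_0)}^2$, so $M^\alpha_\ve(u)\leq C\ve^{\gamma_2}\|f\|_{L^2(Q_0)}^2$ and $d^\alpha_\ve(\lambda_\ve f,f)\leq M^\alpha_\ve(u)$.

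For the term $d^\alpha(\Lambda_\ve f,f)$, write $f=\sum_Q f\ind_Q$, $\Lambda_\ve f=\sum_Q\mathcal{S}_Q$ with $|\mathcal{S}_Q|=\theta_Q=\int_Q f$; subadditivity of $M^\alpha$ and Proposition~\ref{localestimatedalpha} on each $Q$ give $d^\alpha(\Lambda_\ve f,f)\leq\sum_Q d^\alpha(f\ind_Q,\mathcal{S}_Q)\leq C\sum_Q\theta_Q^\alpha L_Q$; then $L_Q<4\ve^\gamma\theta_Q^\nu$, together with $\gamma(d+1)=\gamma_2$, $\nu(d+1)=2-\alpha$ and $\|f\|_{L^2(Q)}^2\geq\theta_Q^2L_Q^{-d}$, yields $\theta_Q^\alpha L_Q\leq C\ve^{\gamma_2}\theta_Q^2L_Q^{-d}\leq C\ve^{\gamma_2}\|f\|_{L^2(Q)}^2$, and summing over disjoint cubes finishes this part. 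If $f=0$ a.e., the statement is trivial with $u=0$. If $\op{supp}\rho_{Q_0}\nsubseteq Q_0$, then $\lambda_\ve f=\rho_{Q_0}$ and $\Lambda_\ve f=\mathcal{S}_{Q_0}$; one argues as above but with a single cube $\widetilde Q_0\supset Q_0\cup\op{supp}\rho_{Q_0}$ in place of $Q$ — the assumption $\op{supp}\rho_{Q_0}\nsubseteq Q_0$ is precisely the inequality $R_{Q_0}>L/2$ which plays the role of the geometric fact $L<4R_{Q_0}$ — Theorem~\ref{dirichlet} on $\widetilde Q_0$ providing both the $M^\alpha_\ve$ and the $L^1$ bounds, and Proposition~\ref{localestimatedalpha} the bound on $d^\alpha(\Lambda_\ve f,f)$. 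The main obstacle is the geometric fact $L_Q<4R_Q$ on minimal cubes — it is what makes replacing the singular dyadic edges by their regularisations affordable — together with the exponent bookkeeping above, which is the only place the exact values of $\beta,\gamma_1,\gamma_2$ are used and where the argument would break for other choices.
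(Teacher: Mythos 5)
Your proof is correct and follows essentially the same route as the paper: apply Theorem \ref{dirichlet} on each (pairwise disjoint) minimal cube of $\mathcal{D}_{min}(f)$, use the minimality bound $L_Q\leq C R_Q$ obtained from a child whose kernel overflows, then Cauchy--Schwarz, H\"older--Poincar\'e and the exponent identities $\gamma\bigl(2d-\beta(d-1)\bigr)=2=\gamma_1+\gamma_2$; your bookkeeping via $\theta_Q$ is just a rearrangement of the paper's intermediate inequality $D_i^{\delta}\leq C\ve^{\gamma}\|f_i\|_{L^2}^{\frac{1-\gamma}{d-1}}$, and invoking Proposition \ref{localestimatedalpha} for $d^\alpha(f\ind_Q,\mathcal{S}_Q)\leq C\theta_Q^\alpha L_Q$ is equivalent to the paper's use of the dyadic field $X$. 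Your treatment of the case $\op{supp}\rho_{Q_0}\nsubseteq Q_0$ (single enlarged cube $\widetilde Q_0$, with $R_{Q_0}\gtrsim L$ playing the role of $L_Q\leq CR_Q$) is exactly the paper's own argument, at the same level of detail.
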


\begin{proof}
First assume that $\op{supp}\rho_{Q_0}\subset Q_0$ i.e. $Q_0\in \mathcal{D}(f)$. If $\mathcal{D}_{min}(f)= \emptyset$, then $f(x)=0$ a.e. and the proposition is trivial. Hence, one can assume that $\mathcal{D}_{min}(f)\neq \emptyset$. Then $f$ is supported on $D(f)$ and $\mathcal{D}_{min}(f)=:\{Q_i\}_{i\in I}$ is a finite or countable partition of $D(f)$. Denote for simplicity $D_i:=\op{diam}(Q_i)$, $f_{i}:=f\ind_{Q_i}$ (restriction of $f$ to $Q_i$), $\theta_i:=\theta_{Q_i}$ and $\rho_i:=\rho_{Q_i}=\theta_i\,\rho_{R_i}$ for $i\in I$, where 
$$R_i:=R_{Q_i}=\ve^\gamma\theta_i^{\frac{1-\gamma}{d-1}}.$$ 
Since $Q_i$ is minimal in $\mathcal{D}(f)$, we deduce that, for some constants $C,C'>0$,
\begin{equation}\label{dri}
C' R_i\leq D_i \leq C R_i.
\end{equation}
Indeed, the first inequality follows from the fact that $\op{supp}\rho_i\subset Q_i$ and $\op{diam}(\op{supp}\rho_i)=cR_i$ for some constant $c$ depending on $\rho$. For the second inequality observe that, since $Q_i$ is minimal, there exists $Q\in \mathcal{C}(Q_i)$ such that $\op{supp}\rho_Q\nsubseteq Q$ and hence $R_Q\geq c'\op{diam}(Q)=c'/2D_i$ for some constant $c'>0$ depending on $\rho$. Since $\theta_Q\leq \theta_{Q_i}=\theta_i$, one has $R_Q\leq R_i$ and the second inequality follows.

Now, Theorem \ref{dirichlet} allows us to find $u_i\in H^1_0(Q_i)$ such that $\nabla\cdot  u_i=g_i$, $\|u_i\|_{H^1(Q_i)}\leq C\,\|g_{i}\|_{L^2(Q_i)}$ and $\|u_i\|_{L^1(Q_i)}\leq C\, D_i\|g_i\|_{L^1(Q_i)}$, where $g_i:=f_{i}-\rho_{i}$. Since $u_i$ vanishes at $\partial Q_i$, one can extend $u_i$ by $0$ out of $Q_i$ to get a function in $H^1 (\R^d)$: for the sake of simplicity, this function is still denoted by $u_i$. Consequently, $u=\sum_i u_i$ belongs to $H^1_0 (Q_0)$ and $\nabla\cdot  u=f-\lambda_\ve  f$. It remains to estimate $M^\alpha_\ve (u)$ and $\|u\|_{L^1(Q_0)}$. First of all, 
$$\|u\|_{L^1(Q_0)}\leq \sum_i \|u_i\|_{L^1(Q_i)}\leq C\, \op{diam}(Q_0)\sum_i \|g_i\|_{L^1(Q_i)}$$ 
and the inequality $\|g_i\|_{L^1(Q_i)}\leq 2\theta_i$ leads to $\|u\|_{L^1}\leq 2C\, \op{diam}(Q_0)\|f\|_{L^1}$ as required. 

Let us compute the $L^2$-norm of $\rho_i$:
$$\|\rho_i\|_{L^2(Q_i)}^2=\theta_i^2\; \|\rho_{R_{i}}\|^2_{L^2(Q_i)}=\theta_i^2 R_i^{-d} \|\rho\|_{L^2(Q_i)}^2= C\theta_i^2 R_i^{-d}.$$
By a Cauchy-Schwarz inequality, 
 \begin{equation}\label{cauchyschwarz}\theta_i^2=\left(\dint_{Q_i} f_i\right)^2\leq |Q_i| \|f_i\|_{L^2(Q_i)}^2=D_i^d \|f_i\|_{L^2(Q_i)}^2\end{equation}
 which, together with \eqref{dri}, gives
$$\|\rho_i\|_{L^2(Q_i)}^2\leq C\, R_i^d \|f_i\|^2_{L^2(Q_i)}R_i^{-d}= C\, \|f_i\|^2_{L^2(Q_i)}.$$
Since $\|u_i\|_{H^1(Q_i)}\leq C \|f_i-\rho_i\|_{L^2(Q_i)}$, we get $\|u_i\|_{H^1(Q_i)}\leq C\|f_{i}\|_{L^2(Q_i)}$. Now, because the energy $M^\alpha_\ve$ is local and since each $u_i$ is supported on $Q_i$, one has
\begin{equation*}\label{2terms}
M^{\alpha}_{\ve}(u)=\displaystyle\sum_{i=1}^n M^{\alpha}_{\ve}(u_i)
=\displaystyle\sum_{i=1}^n\left( \ve^{-\gamma_1} \int_{Q_i}|u_i|^{\beta}+\ve^{\gamma_2}\int_{Q_i}|\nabla u_i|^2\right)
\end{equation*}
By construction of $u_i$, one has 
$$\dint_{Q_i}|\nabla u_i|^2\leq \|u_i\|_{H^1(Q_i)}^2\leq C\|g_i\|_{L^2(Q_i)}^2\leq 2C\left(\|\rho_i\|_{L^2(Q_i)}^2+\|f_i\|_{L^2(Q_i)}^2\right)\leq C'\|f_i\|^2_{L^2(Q_i)}\quad .$$
It remains to estimate the first term. First of all, we use the H\"older and Poincar\'e inequalities as follows:
$$
\displaystyle\int_{Q_i}|u_i|^{\beta}\leq|Q_i|^{1-\beta/2}\left(\dint_{Q_i}|u_i|^{2}\right)^{\beta/2}
\leq D_i^{d-d\beta/2}\left(D_i^2\dint_{Q_i}|\nabla u_i|^{2}\right)^{\beta/2}
\leq D_i^{\nu} \|f_i\|^{\beta}_{L^2(Q_i)},
$$
where $\nu=\beta+d-\frac{d\beta}{2}$. In view of \eqref{cauchyschwarz} and \eqref{dri}, we have
$$D_i\leq C R_i= C \ve^{\gamma} \theta_i^{\frac{1-\gamma}{d-1}} \leq  C \ve^{\gamma} (D_i^{\frac{d}{2}} \|f_i\|_{L^2(Q_i)})^{\frac{1-\gamma}{d-1}} $$
and, introducing $\delta :=1-\frac{d(1-\gamma)}{2(d-1)}$,
\begin{equation}\label{di}D_i^{\delta}\leq C\ve^\gamma \|f_i\|^{\frac{1-\gamma}{d-1}}_{L^2}.\end{equation}
Finally, since $-\gamma_1+\frac{\gamma\nu}{\delta}=\gamma_2$ and $\beta+\frac{\nu(1-\gamma)}{\delta(d-1)}=2$, we get
$$\ve^{-\gamma_1}\dint_{Q_i}|u_i|^\beta\leq C \ve^{-\gamma_1+\frac{\gamma\nu}{\delta} }\|f_i\|_{L^2(Q_i)}^{\beta+\frac{\nu(1-\gamma)}{\delta(d-1)}}=C \ve^{\gamma_2}\|f_i\|_{L^2(Q_i)}^2.$$
The proof of the second inequality is quite similar but easier:
$$d^{\alpha}(\Lambda_\ve f,f)\leq\displaystyle\sum_{i=1}^n d^{\alpha}(\mathcal{S}_{Q_i},f_i)\leq\displaystyle\sum_{i=1}^n \theta_i^{\alpha}D_i\quad .$$
Once again, applying \eqref{cauchyschwarz} and then \eqref{di}, we get 
$$d^\alpha(\Lambda_\ve  f,f)\leq C\, \ve^{\gamma_2} \|f\|^2_{L^2}.$$
In the case where $\op{supp}\rho_{Q_0}\nsubseteq Q_0$, i.e. $R_{Q_0}:=\ve^\gamma \theta_Q^\frac{1-\gamma}{d-1}\geq CL$ ($L$ being the side length of $Q_0$ and $C$ a constant depending on $\rho$), the proof is the same. Indeed, we just apply Theorem \ref{dirichlet} to $g=f-\rho_{Q_0}$ and the same computations as above lead to the same result.
\end{proof}

\subsection{Proof of Theorem \ref{localestimate}}
Let $Q_0=(0,L)^d$, $L>0$ and $f\in L_+^2(Q_0)$ with $\int_{Q_0}f=\theta$. In the case where $\op{supp}\rho_{Q_0}\nsubseteq Q_0$, Theorem \ref{localestimate} is a particular case of Proposition \ref{distdiffstate}. Consequently, one can assume that $\op{supp}\rho_{Q_0}\subset Q_0$ i.e. $Q_0\in \mathcal{D}(f)$. In the case where $\mathcal{D}(f)=\{Q_0\}$, one has $\lambda_\ve  f=\rho_{Q_0}$ and Theorem \ref{localestimate} is a consequence of Proposition \ref{distdiffstate} as well. For this reason, one can assume that $\mathcal{C}(Q_0)\subset \mathcal{D}(f)$. Moreover, up to replacing $f$ by $f+\eta$ for some small constant $\eta>0$ and passing to the limit when $\eta\to 0$, one can assume that $\mathcal{D}(f)$ is finite. Indeed, in view of \eqref{estimationofj}, $\mathcal{D}(f+\eta)$ is finite since for all $Q\in \mathcal{D}(f+\eta)$, $\op{diam}(Q)\geq C_\eta >0$.

Our aim is to prove that there exists $C>0$ only depending on $\alpha$, $d$ and $\rho$ such that
 \begin{equation*}\label{localestim}
d^{\alpha}_{\ve}(f,\rho_{Q_0})\leq C\big\{\theta^\alpha L
+\ve^{\gamma_2} \|f\|_{L^2(Q_0)}^2\big\}.
\end{equation*}
The idea of the proof is to approximate the vector field $X=\sum X_Q$ of the previous section (see \eqref{muq}) by a vector field in $H^1$ using the kernel $\rho$. In this part, we will use the notations of the previous section: in particular, the definition of $\mathcal{D}(f)$ in Definition \ref{diffstate}, the measures $X_Q$ in \eqref{muq} and $X =\sum_{Q\in\mathcal{D}(f)} X_Q$.

Since $\mathcal{C}(Q_0)\subset \mathcal{D}(f)$, we can construct the regularized vector field $Y$ by the formula
\begin{equation*}\label{v}
Y=\sum_{\substack{Q\in\mathcal{D}(f) \\Q\neq Q_0}}Z_Q,
\end{equation*}
where, for all $Q\in\mathcal{D}(f)$ such that $Q\neq Q_0$ (see Figure \ref{sausage}),
\begin{equation}\label{uq}
Z_{Q}:=\theta_Q\; n_{Q}\;\overline{\rho}_{Q}\ast \hone_{|[c_{F(Q)},c_Q]},\end{equation}
$n_Q$ being the normalized vector $n_Q=\frac{c_{Q}-c_{F(Q)}}{\| c_{Q}-c_{F(Q)}\|}$ and $\overline{\rho}_Q$ being defined in \eqref{defrhoqbar}. 

\begin{figure}[H]
\centering
\begin{tikzpicture}[line cap=round,line join=round,>=triangle 45,x=4.5cm,y=4.5cm]
\clip(-1.1,-1.1) rectangle (1.1,1.1);
\draw (-1.,1.)-- (1.,1.);
\draw (1.,1.)-- (1.,-1.);
\draw (1.,-1.)-- (-1.,-1.);
\draw (-1.,-1.)-- (-1.,1.);
\draw(0.5,0.5) circle (0.763675323681cm);

\draw [dotted] (0.5,0.5) circle (0.254558441227cm);
\draw [dotted] (0.46,0.54)-- (0.71,0.79);
\draw [dotted] (0.79,0.71)-- (0.54,0.46);
\draw [dotted] (0.46,0.46)-- (0.21,0.71);
\draw [dotted] (0.54,0.46)-- (0.29,0.21);
\draw [dotted] (0.29,0.79)-- (0.54,0.54);
\draw [dotted] (0.21,0.29)-- (0.46,0.54);

\draw (0.38,0.62)-- (-0.12,0.12);
\draw (0.12,-0.12)-- (0.62,0.38);
\draw(-0.511666420184,0.512557428874) circle (0.591269427819cm);
\draw(-0.5,-0.5) circle (0.154263880307cm);
\draw(0.5,-0.5) circle (0.443599395946cm);
\draw (-0.0233328403686,0.025114857748)-- (-0.52424023019,-0.47575976981);
\draw (0.0242402301905,-0.0242402301905)-- (-0.47575976981,-0.52424023019);
\draw (0.430295079777,-0.569704920223)-- (-0.0697049202231,-0.0697049202231);
\draw (0.0697049202231,0.0697049202231)-- (0.569704920223,-0.430295079777);
\draw (-0.604575447277,0.419648401781)-- (-0.0929090270931,-0.0929090270931);
\draw (-0.418757393091,0.605466455967)-- (0.0929090270931,0.0929090270931);

\draw [dotted] (0.439654200638,0.439654200638)-- (0.689654200638,0.189654200638);
\draw [dotted] (0.560345799362,0.560345799362)-- (0.810345799362,0.310345799362);

\draw [->,line width=1pt] (-0.141505994761,0.141505994761) -- (-0.294299275179,0.294299275179);
\draw [->,line width=1pt] (-0.141505994761,-0.141505994761) -- (-0.294299275179,-0.294299275179);
\draw [->,line width=1pt] (0.141505994761,-0.141505994761) -- (0.294299275179,-0.294299275179);
\draw [->,line width=1pt] (0.141505994761,0.141505994761) -- (0.294299275179,0.294299275179);

\draw [->,dotted] (0.420430632755,0.420430632755) -- (0.320855398352,0.320855398352);
\draw [->,dotted] (0.579569367245,0.420430632755) -- (0.679144601648,0.320855398352);
\draw [->,dotted] (0.420430632755,0.579569367245) -- (0.320855398352,0.679144601648);
\draw [->,dotted] (0.579569367245,0.579569367245) -- (0.679144601648,0.679144601648);

\draw (-1,1) node[anchor=north west] {$Q_1$};
\draw (1,1) node[anchor=north east] {$Q_2$};
\draw (-1,-1) node[anchor=south west] {$Q_3$};
\draw (1,-1) node[anchor=south east] {$Q_4$};

\draw (-0,1) node[anchor=north east] {$Q$};

\draw (-0.254695574082,0.52) node[anchor=north west] {$Z_{Q_1}$};
\draw (0.22,0.25) node[anchor=east] {$Z_{Q_2}$};
\draw (-0.51,-0.12) node[anchor=north west] {$Z_{Q_3}$};
\draw (0.41,-0.177021342608) node[anchor=north west] {$Z_{Q_4}$};

\draw (-0.5,0.5) node {$\rho_{Q_1}$};
\draw (0.691690334656,0.62) node[anchor=north west] {$\rho_{Q_2}$};
\draw (-0.5,-0.5) node[anchor=south east] {$\rho_{Q_3}$};
\draw (0.5,-0.5) node {$\rho_{Q_4}$};

\draw [shift={(-0.511666420184,0.512557428874)}] plot[domain=0.785398163397:3.92699081699,variable=\t]({1.*0.131393206182*cos(\t r)+0.*0.131393206182*sin(\t r)},{0.*0.131393206182*cos(\t r)+1.*0.131393206182*sin(\t r)});
\draw [shift={(0.25,0.75)},dotted]  plot[domain=0.785398163397:3.92699081699,variable=\t]({1.*0.0565685424949*cos(\t r)+0.*0.0565685424949*sin(\t r)},{0.*0.0565685424949*cos(\t r)+1.*0.0565685424949*sin(\t r)});
\draw [shift={(0.75,0.75)},dotted]  plot[domain=-0.785398163397:2.35619449019,variable=\t]({1.*0.0565685424949*cos(\t r)+0.*0.0565685424949*sin(\t r)},{0.*0.0565685424949*cos(\t r)+1.*0.0565685424949*sin(\t r)});
\draw [shift={(0.75,0.25)},dotted]  plot[domain=-2.35619449019:0.785398163397,variable=\t]({1.*0.0853418478894*cos(\t r)+0.*0.0853418478894*sin(\t r)},{0.*0.0853418478894*cos(\t r)+1.*0.0853418478894*sin(\t r)});
\draw [shift={(0.,0.)}] plot[domain=2.35619449019:5.49778714378,variable=\t]({1.*0.169705627485*cos(\t r)+0.*0.169705627485*sin(\t r)},{0.*0.169705627485*cos(\t r)+1.*0.169705627485*sin(\t r)});
\draw [shift={(0.,0.)}] plot[domain=0.785398163397:3.92699081699,variable=\t]({1.*0.0985776435436*cos(\t r)+0.*0.0985776435436*sin(\t r)},{0.*0.0985776435436*cos(\t r)+1.*0.0985776435436*sin(\t r)});
\draw [shift={(0.25,0.25)},dotted]  plot[domain=2.35619449019:5.49778714378,variable=\t]({1.*0.0565685424949*cos(\t r)+0.*0.0565685424949*sin(\t r)},{0.*0.0565685424949*cos(\t r)+1.*0.0565685424949*sin(\t r)});
\draw [shift={(0.,0.)}] plot[domain=-2.35619449019:0.785398163397,variable=\t]({1.*0.131393206182*cos(\t r)+0.*0.131393206182*sin(\t r)},{0.*0.131393206182*cos(\t r)+1.*0.131393206182*sin(\t r)});
\draw [shift={(0.5,0.5)},dotted]  plot[domain=0.785398163397:3.92699081699,variable=\t]({1.*0.0853418478894*cos(\t r)+0.*0.0853418478894*sin(\t r)},{0.*0.0853418478894*cos(\t r)+1.*0.0853418478894*sin(\t r)});

\draw [dash pattern=on 2pt off 2pt] (0.,1.)-- (0.,-1.);
\draw [dash pattern=on 2pt off 2pt] (-1.,0.)-- (1.,0.);
\draw [dash pattern=on 1pt off 2pt on 3pt off 4pt] (0.5,1.)-- (0.5,0.);
\draw [dash pattern=on 1pt off 2pt on 3pt off 4pt] (0.,0.5)-- (1.,0.5);

\draw [shift={(0.000415320656828,0.000398940826174)}] plot[domain=-0.803722374116:2.33787027947,variable=\t]({1.*0.034275034219*cos(\t r)+0.*0.034275034219*sin(\t r)},{0.*0.034275034219*cos(\t r)+1.*0.034275034219*sin(\t r)});
\end{tikzpicture}
\caption{\label{sausage} A square $Q$ and its $4$ dyadic children $Q_i$ with the associated vector field $Z_Q$}
\end{figure}
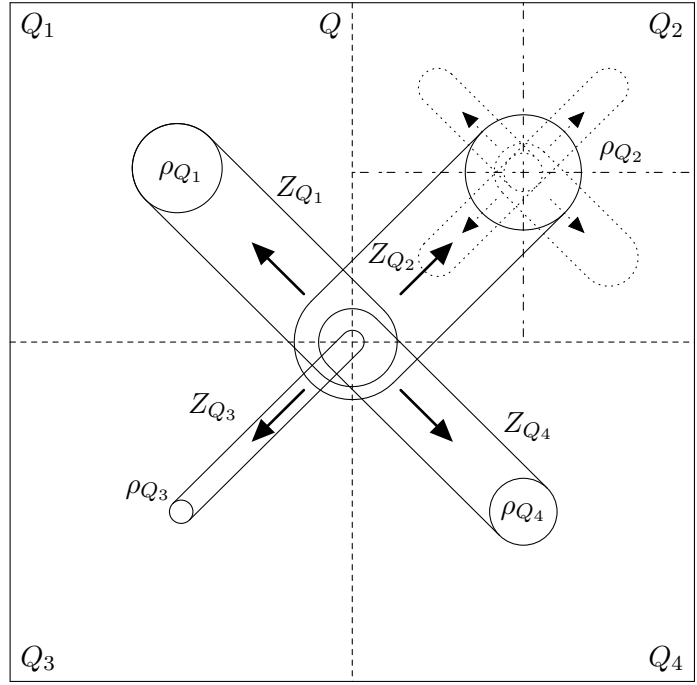
\noindent By definition of the kernel $\overline{\rho}_{Q}$, one has
\begin{equation}\label{malphazq}
M^{\alpha}_{\ve}(Z_Q)\leq C\theta_Q^{\alpha}\;\op{diam}(Q).
\end{equation}
This a consequence of the choice of $R_Q$ as a minimizer in $\eqref{heuristic}$. Indeed, for the sake of simplicity, let us assume that $\op{supp}\rho$ is the unit ball centered at the origin. Then $Z_Q$ is concentrated on a strip of width $R_Q$ around the segment $S=[c_{F(Q)},c_Q]$, i.e.  
\begin{equation}\label{strip}\op{supp}Z_Q\subset \tilde{S}:=\{x\in \R^d\;:\; \op{dist}(x,S)\leq R_Q\}\end{equation}
and $Z_Q$ satisfies the two estimates 
\begin{equation}\label{estimationzq}
\|Z_Q\|_{L^\infty}\leq C\theta_Q R_Q^{1-d}\quad\text{and}\quad\|\nabla Z_Q\|_{L^\infty}\leq C\theta_Q R_Q^{-d}.
\end{equation}
Then, the same computations as in $\eqref{heuristic}$ and the fact that $R_Q\leq \op{diam}(Q)$ give \eqref{malphazq}. 

Let us estimate the $L^1$ norm of $Y$ which has to be controlled by $\theta$ as stated in Theorem \ref{localestimate}:
\begin{equation*}\label{finalv} 
\|Y\|_{L^1(Q_0)}\leq \displaystyle\sum_{j\geq 1}\ \sum_{Q\in \mathcal{D}(f)\cap \mathcal{Q}_j}\|Z_Q\|_{L^1(\tilde{S})}\leq \displaystyle\sum_{j\geq 1}\ \sum_{Q\in \mathcal{D}(f)\cap \mathcal{Q}_j}\theta_Q\, L\,2^{-j}\ =L\,\theta.
\end{equation*}
Note that 
\begin{equation*}\label{div}
\nabla\cdot  Y=\rho_{Q_0}-h-\lambda_\ve  f,
\end{equation*}
where $h$ stands for the extra divergence. $h$ can be written as
$$h=\sum_{Q\in\mathcal{D}_{fr.}}\left\{\rho_Q-\sum_{Q'\in\mathcal{C}(Q)}\rho_{Q',Q}\right\},$$
where $\rho_{Q',Q}$ represents the kernel $\rho_{Q'}$ translated at $c_Q$, center of $Q$, and, for the sake of simplicity, the set of all cubes $Q$ such that $\mathcal{C}(Q)\subset\mathcal{D}(f)$ has been denoted by $\mathcal{D}_{fr.}$:
\begin{equation*}\label{D}
\mathcal{D}_{fr.}:=\{Q\in \mathcal{D}(f)\;:\; \mathcal{C}(Q)\subset \mathcal{D}(f)\}.
\end{equation*}
Since $\nabla\cdot  Y=\rho_{Q_0}-f +(f-\lambda_\ve  f)-h\neq\rho_{Q_0} -f$, we have to slightly modify the vector field $Y$. This will be done replacing $Y$ by 
$$V=Y+V_1+V_2,$$
where $V_1$, $V_2\in H^1(Q_0,\R^d)$ are constructed so that $\nabla\cdot  V_1= h$ and $\nabla\cdot V_2 = \lambda_\ve  f-f$. The construction of $V_1$ and the estimate of $M^\alpha_\ve (V_1)$, $\|V_1\|_{L^1}$ will be the object of the first step. In the second step we prove that $M^\alpha_\ve (Y)\leq C\theta^\alpha L$. Then, Proposition \ref{distdiffstate} allows us to construct $V_2\in H^1$ such that $\nabla\cdot  V_2=\lambda_\ve  f -f$ with an estimate on $M^\alpha_\ve (V_2)$ and $\|V_2\|_{L^1}$.
\paragraph{First step: Correction at the nodes, construction of $V_1$.}
For all $ Q\in\mathcal{D}_{fr.}$, let $B_Q$ be the support of $\rho_Q$. Since $\op{supp}\rho$ has been supposed to be the unit ball centered at the origin and $\rho_{Q}(x)=\theta_Q\rho_{R_Q}(x-c_Q)$, we have $B_Q=B(c_Q,R_{Q})\subset Q$. Let us define the extra divergence corresponding to this node,
$$
h_Q=\rho_{Q}-\displaystyle\sum_{Q'\in\mathcal{C}(Q)}\rho_{Q',Q}\quad .
$$
For each $Q\in\mathcal{D}_{fr.}$, thanks to Theorem \ref{dirichlet}, we can find $V_Q\in H_0^1(B_Q)$ such that $\nabla\cdot  V_Q=h_Q$ and $\|V_Q\|_{H^1(B_Q)}\leq C\;\|h_Q\|_{L^2(B_Q)}$. But in this case, because $h_Q$ is radial up to a translation, we essentialy use the proposition in dimension 1 which is quite easy and gives better estimates. Let us give more details on this point:
\begin{lem}\label{diffusionradial}
Let $d\geq 1$ and $B=B(0,R)\subset\R^d$ be a ball centered at the origin. There exists a constant $C>0$ only depending on $d$ such that the following holds:

Let $F\in L^\infty (B)$ be a radial function: i.e. for a.e. $x\in B$, $F(x)=f(|x|)$ for some $f\in L^\infty (0,R)$. Assume that $\int_B F=0$. Then, there exists a radial function $V\in W^{1,\infty}_{0}(B,\R^d)$ such that $\nabla\cdot V = F$ and
\begin{equation*}\label{bb}
\|\nabla V\|_{L^\infty (Q_0)}\leq C\;\|F\|_{L^\infty(Q_0)}\quad .
\end{equation*}
\end{lem}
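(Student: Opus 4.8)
The plan is to construct $V$ explicitly as a radial gradient-like field, i.e. to look for $V(x) = \psi(|x|)\,\frac{x}{|x|}$ for a suitable scalar profile $\psi:[0,R]\to\R$. For such a field, the divergence in $\R^d$ is $\nabla\cdot V(x) = \psi'(r) + \frac{d-1}{r}\psi(r)$ with $r=|x|$, so the equation $\nabla\cdot V = F$ becomes the linear first-order ODE $\psi'(r) + \frac{d-1}{r}\psi(r) = f(r)$. This has the integrating factor $r^{d-1}$, giving $\bigl(r^{d-1}\psi(r)\bigr)' = r^{d-1}f(r)$, hence the unique candidate vanishing at $r=0$,
\begin{equation*}
\psi(r) = \frac{1}{r^{d-1}}\int_0^r s^{d-1} f(s)\,\d s = \frac{1}{\omega_{d-1} r^{d-1}}\int_{B(0,r)} F,
\end{equation*}
where $\omega_{d-1} = \mathcal{H}^{d-1}(S^{d-1})$. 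This is exactly the field whose flux through each sphere $\partial B(0,r)$ equals the integral of $F$ inside, which makes the divergence identity transparent.

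Next I would check the three required properties. First, $\psi(R)=0$: since $\int_B F = 0$ we have $\int_0^R s^{d-1}f(s)\,\d s = 0$, so $\psi(R)=0$ and $V$ extends by $0$ to a function in $W^{1,\infty}_0(B)$ (the radial direction $x/|x|$ is the outward normal on $\partial B$, and $\psi$ vanishes there, so there is no jump). Second, the $L^\infty$ bound on $V$ itself: $|\psi(r)| \le \frac{1}{r^{d-1}}\int_0^r s^{d-1}\|F\|_{L^\infty}\,\d s = \frac{r}{d}\|F\|_{L^\infty} \le \frac{R}{d}\|F\|_{L^\infty}$. Third — the stated estimate — the $L^\infty$ bound on $\nabla V$. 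Here one computes $\nabla V$ in terms of $\psi$; in the radial and tangential directions the derivative involves $\psi'(r) = f(r) - \frac{d-1}{r}\psi(r)$ and $\psi(r)/r$ respectively. Each of these is bounded by $C_d\|F\|_{L^\infty}$: indeed $|\psi(r)/r| \le \frac1d\|F\|_{L^\infty}$ from the computation above, and therefore $|\psi'(r)| \le \|F\|_{L^\infty} + (d-1)\cdot\frac1d\|F\|_{L^\infty} \le C_d\|F\|_{L^\infty}$. Assembling the components of $\nabla V$ (the matrix $\nabla V = \psi'(r)\,\frac{x}{r}\otimes\frac{x}{r} + \frac{\psi(r)}{r}\bigl(\mathrm{Id} - \frac{x}{r}\otimes\frac{x}{r}\bigr)$) gives $\|\nabla V\|_{L^\infty(B)}\le C_d\|F\|_{L^\infty(B)}$ with $C_d$ depending only on $d$.

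The only genuinely delicate point is the behavior near $r=0$, where the factor $1/r^{d-1}$ and the $\frac{d-1}{r}\psi(r)$ term are individually singular: one must verify that $\psi$ is Lipschitz up to the origin and that $V$ is genuinely in $W^{1,\infty}$ rather than merely bounded away from $0$. This is handled by the estimate $|\psi(r)|\le \frac{r}{d}\|F\|_{L^\infty}$, which shows $\psi(0)=0$ and that $\psi(r)/r$ stays bounded, so the apparent singularities cancel; a short argument (e.g. approximating $F$ by continuous radial functions, or differentiating the explicit integral formula) confirms that $\psi$ is Lipschitz on $[0,R]$ and hence $V$ is Lipschitz on $B$. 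I expect this near-origin regularity check to be the main — though still routine — obstacle; everything else is a direct computation with the one-dimensional ODE. Finally, one notes that $V$ is radial by construction, completing the proof.
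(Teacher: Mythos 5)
Your proof is correct and follows essentially the same route as the paper: the explicit radial ansatz (your $V=\psi(|x|)\,x/|x|$ is the paper's $V=v(|x|)\,x$ with $\psi=rv$), the one-dimensional ODE solved by the integral formula, and the componentwise bound on $\nabla V$ via $|\psi(r)|\le \frac{r}{d}\|F\|_{L^\infty}$. Your explicit checks of the boundary condition $\psi(R)=0$ and of the regularity at the origin are fine and only make explicit what the paper leaves implicit.
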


\begin{proof}
First of all, by a scaling argument, one can assume that $R=1$. The vector field $V: B\to \R^d$ defined by $V(x)=v(|x|) x$ for some Lipschitz function $v: \R^+\to \R$ satisfies
$$\nabla \cdot V(x)= r^{1-d}[r^dv(r)]'$$
in the distributional sense. Thus, if $v$ is chosen as
$$v(r)=r^{-d}\int_0^r f(s)s^{d-1}\d s,$$
then $V$ solves the following problem:
$$
\begin{cases}
\nabla \cdot V(x) =F(x)& on $B$,\\
V(x) =0 & on $\partial B$.
\end{cases}
$$
Moreover, for a.e. $x\in B$, we have $\nabla V (x) = v'(|x|) x\otimes\frac{x}{|x|}+v(|x|) \op{Id}$, where $\op{Id}$ is the matrix identity. In particular, we get $\|\nabla V\|_{L^\infty} \leq C(\|rv'(r)\|_{L^\infty} +\|v\|_{L^\infty})$. The second term in the RHS on the preceding equation is estimated by $\|v\|_{L^\infty}\leq r^{1-d} \|f\|_{L^\infty}r^{d-1}=\|f\|_{L^\infty}$. For the first term, one has $v'(r)=-dr^{-d-1}\int_0^r f(s)s^{d-1}\d s+r^{-1}f(r)$ and so $\|rv'(r)\|_{L^\infty}\leq C\|f\|_{L^\infty}$. Thus, $\|\nabla V\|_{L^\infty}\leq C\|F\|_{L^\infty}$.
\end{proof}
Applying Lemma \ref{diffusionradial} to $F=h_Q$ and $R=R_{Q}$ gives $V_Q\in H^1_0 (B_Q)$ such that $\nabla \cdot V_Q=h_Q$ and
\begin{equation}\label{nablawq}
\|\nabla V_Q\|_{L^{\infty}(B_Q)}\leq C \theta_Q R_Q^{-d}\ ,\quad \|\nabla V_Q\|_{L^1(B_Q)}\leq |B_Q|\|\nabla V_Q\|_{L^\infty(B_Q)}\leq C\theta_Q.
\end{equation}
Moreover, since $V_Q$ is supported on $B_Q=B(c_Q,R_Q)$, we deduce that $\| V_Q\|_{L^{\infty}(B_Q)}\leq R_Q\|\nabla V_Q\|_{L^{\infty}(B_Q)}\leq C \theta_Q R_Q^{1-d}$ so that $V_Q$ satisfies the same estimate as \eqref{estimationzq}. In particular, we get $M^\alpha_\ve (V_Q)\leq C\theta_Q^\alpha\op{diam}(Q)$. Now define 
\begin{equation*}\label{W}V_1=\sum_{Q\in \mathcal{D}_{fr.}}V_Q.\end{equation*}
Since $\|V_Q\|_{L^1(B_Q)}\leq CR_Q\|\nabla V_Q\|_{L^1(B_Q)}\leq C\op{diam}(Q)\theta_Q$, Lemma \ref{malphafinite} implies 
$$\|V_1\|_{L^1(Q_0)}\leq C \op{diam}(Q_0) \theta_{Q_0}\leq C' L \|f\|_{L^1(Q_0)}$$ 
as required. Then, using the definition of $M^\alpha_\ve$ in \eqref{malphaepsilon} and the subadditivity of $x\to |x|^\beta$, one gets
\begin{equation}\label{malphaepsw1}
M^{\alpha}_{\ve}(V_1)\leq\ve^{-\gamma_1} \displaystyle\sum_{Q\in \mathcal{D}_{fr.}}\int|V_Q|^{\beta}+ 2\;\ve^{\gamma_2} \displaystyle\int\sum_{Q,Q'\in \mathcal{D}_{fr.}\;:\; Q'\subset Q}\left| \nabla V_{Q'}:\nabla V_Q\right|,
\end{equation}
where $A:B$ stands for the euclidian product of two matrices $A=(A_{ij})_{1\leq i,j\leq d}$, $B=(B_{ij})_{1\leq i,j\leq d}$ of size $d\times d$: $A:B:=\sum_{ij}A_{ij}B_{ij}$. For the estimate of $|\nabla V_1|^2$, we have used the identity $|\nabla V_1|^2=\nabla V_1 : \nabla V_1 = \sum_{Q,Q'\in\mathcal{D}_{fath}}\nabla V_Q :\nabla V_{Q'}$. Since $V_Q$ is supported on $Q$, $\nabla V_Q :\nabla V_{Q'}$ vanishes except when $Q\cap Q'\neq \emptyset$, i.e. $Q\subset Q'$ or $Q'\subset Q$, thus justifying the factor $2$ and the inclusion $Q'\subset Q$ in \eqref{malphaepsw1}. 

We need to estimate the two terms in \eqref{malphaepsw1}. Since $M^\alpha_\ve (V_Q)\leq C\theta_Q^\alpha\op{diam}(Q)$, thanks to Lemma \ref{malphafinite}, this term is less or equal than $C \theta^\alpha L$ as required. Using the inequality $\|fg\|_{L^1}\leq \|f\|_{L^\infty}\|g\|_{L^1}$, one can estimate the second term of \eqref{malphaepsw1} by
$$2\;\ve^{\gamma_2} \displaystyle\sum_{Q,Q'\in \mathcal{D}_{fr.}\;:\; Q'\subset Q}\|\nabla V_{Q}\|_{L^{\infty}(B_Q)}\|\nabla V_{B_{Q'}}\|_{L^1(B_{Q'})}.
$$
Note that it would be more natural to use a Cauchy-Schwarz inequality ($L^2$-$L^2$) at this step but, using it, we were not able to deduce the estimate by $\theta^\alpha L$. Once again, since $R_{Q'}\leq \op{diam}(Q')$, we have
\begin{equation}\label{thesame}
\|\nabla V_{Q'}\|_{L^1(B_{Q'})}\leq C \theta_{Q'}\leq \op{diam}(Q')R_{Q'}^{-1}\theta_{Q'}=C\op{diam}(Q')\ve^{-\gamma}\theta_{Q'}^{1-\frac{1-\gamma}{d-1}}.
\end{equation}
Since $1-\frac{1}{d}<1-\frac{1-\gamma}{d-1}<1$, Lemma \ref{malphafinite} gives
$$\sum_{Q'\in\mathcal{D}_{fr.}\;:\; Q'\subset Q}\|\nabla V_{Q'}\|_{L^1(B_{Q'})}\leq C\ve^{-\gamma}\op{diam}(Q)\theta_Q^{1-\frac{1-\gamma}{d-1}}.$$
Now, elementary computations on exponents $\alpha$, $\gamma_2$, $\gamma$ and Lemma \ref{malphafinite} give successively $\gamma_2=(d+1) \gamma$,  $\alpha=2-(d+1)\frac{1-\gamma}{d-1}$ and
$$C\ve^{\gamma_2}\displaystyle\sum_{Q\in \mathcal{D}_{fr.}} \op{diam}(Q)\theta_Q R_Q^{-d}\ve^{-\gamma}\theta_Q^{1-\frac{1-\gamma}{d-1}}=C\displaystyle\sum_{Q\in \mathcal{D}_{fr.}}\op{diam}(Q)\theta_Q^{\alpha} \leq C \theta^{\alpha} L.$$
Finally, we have obtained the desired inequality: $M^{\alpha}_{\ve}(V_1)\leq C\; \theta^{\alpha} L$.

\paragraph{Second step: estimate of the energy of $Y$ on the node set.}
In order to get estimates on $Y$, it is convenient to divide $Q_0$ into 2 domains: the node set $N$ and its complementary $N^c$, where
\begin{equation*}\label{node}
N:=\displaystyle\bigcup_{Q\in\mathcal{D}(f)}B(c_Q, cR_Q)
\end{equation*}
and $c>0$ is a constant which will be chosen later. By analogy with $V_1$, one can write $Y_{|N}$ as a sum of vector fields $Y_Q$, where
$$Y_Q=\begin{cases}
\ind_{B(c_Q, cR_Q)}\left(Z_Q-\sum_{Q'\in \mathcal{C}(Q)} Z_{Q'}\right)&if $Q\in\mathcal{D}_{fr.}$ (see \eqref{uq}),\\
\ind_{B(c_Q, cR_Q)}Z_Q&otherwise.
\end{cases}
$$
Now, from \eqref{estimationzq}, we deduce the estimates \eqref{nablawq} satisfied by $V_Q$ are also true for $Y_Q$ and consequently, we obtain $M^{\alpha}_{\ve}(Y,N)\leq C\; \theta^{\alpha} L$ as well (see \eqref{malphaepsilon} for the definition of $M^{\alpha}_{\ve}(Y,N)$).

\paragraph{Third step: estimate of the energy of $Y$ out of the node set.}
Reminding that
$$Y=\displaystyle\sum_{\substack{Q\in\mathcal{D}(f)\\Q\neq Q_0}}Z_Q\quad ,$$
considering that $M^\alpha_\ve$ is not subadditive (due to the term $|\nabla Y|^2$), the first thing to do is to understand to which extent the supports of $Z_Q$ can intersect. To this aim, let us note that if the constant $c>0$ in \eqref{node} is chosen equal to $\sqrt{d}$ or more, due to \eqref{strip}, then each $Z_{Q}$ restricted to $N^c$ is supported on $Q$ (see figure \ref{sausage}): $\op{supp}Z_Q\cap N^c\subset Q$. In particular, this implies that
\begin{equation*}\label{qq'}
\op{supp}Z_Q\cap\op{supp}Z_{Q'}\cap N^c\neq \emptyset \ \Longrightarrow \  Q\cap Q'\neq \emptyset \ \Longrightarrow\ Q\subset Q'\quad\text{or}\quad Q'\subset Q.
\end{equation*}
For this reason, $M^\alpha_\ve (Y,N^c)$ can be estimated exactly in the same way as we did for the estimate of $M^\alpha_\ve (V_1)$ in \eqref{malphaepsw1}. Moreover, the Young inequality, $\|f\ast \mu\|_{L^1}\leq \|f\|_{L^1} |\mu| (\R^d)$, valid for all $f\in L^1(\R^d)$, $\mu\in\mathcal{M}(\R^d)$, and the definition of $Z_Q$ in \eqref{uq}, easily give
$$\|\nabla Z_{Q'}\|_{L^1(Q')}\leq C\theta_{Q'}R_{Q'}^{-1} \op{diam}(Q').$$
Since this estimate (which is the same as \eqref{thesame}) and \eqref{malphazq} are the only ones we have used in the first step for the estimate of $M^\alpha_\ve (V_1)$, we get $M^{\alpha}_{\ve}(Y,N^c)\leq C\theta^\alpha L$ as well.

\paragraph{End of the proof of Theorem \ref{localestimate}}
Finally, the vector field $V=Y+V_1+V_2$, where $V_2$ is given by Proposition \ref{distdiffstate}, satisfies $\nabla\cdot V=\rho_{Q_0} -f$,
$$M^\alpha_\ve (V)\leq 3\{ M^\alpha_\ve (Y)+M^\alpha_\ve (V_1)+M^\alpha_\ve (V_2)\}\leq C\{\theta^\alpha L+\ve^{\gamma_2}\|f\|_{L^2}^2\}$$
and 
$$\|V\|_{L^1}\leq \|Y\|_{L^1}+\|V_1\|_{L^1}+\|V_2\|_{L^1}\leq CL \|f\|_{L^1}.$$

\section{Estimate between $d^{\alpha}_{\ve}$ and the Wasserstein distance\label{comparisonsection}}

Our aim is to prove an estimate on the pseudo-distances $d^\alpha_\ve$ similar to Proposition \ref{comparisonprop}. Because of the Dirichlet term in the definition of $M^\alpha_\ve$, $d^\alpha_\ve$ cannot be estimated only by the Wasserstein distance $W_1$ but one has to add a term involving $\|f^+-f^-\|_{L^2}$. Using Theorem \ref{localestimate}, we are going to prove the following theorem:
\begin{thm}\label{comparisonwithwasserstein}
Let $Q=(0,L)^d$ be a a cube of side length $L>0$ in $\R^d$ and $\ve\in (0,1)$. There exists $C>0$ only depending on $\alpha$, $d$ and $L$ such that for all $f^+, f^-\in L_+^2(Q)$ with $\int_{Q}f^+=\int_{Q}f^-=1$, there exists $u\in H^1(\R^d)$ compactly supported on the set $Q_\ve:=\{x\in\R^d\;:\; \op{dist}(x,Q)\leq C\ve^\gamma\}$ satisfying $\nabla\cdot  u=f:=f^+-f^-$ as well as
\begin{equation}\label{globalest}d^\alpha_\ve(f^+,f^-)\leq M^{\alpha}_{\ve}(u)\leq C\, H\big( W_{1}^{1-d(1-\alpha)}(f^+,f^-)+\ve^{\gamma_2} 
\|f\|_{L^2}^2\big)\quad\text{and}\quad  \|u\|_{L^1}\leq C,\end{equation}
where $H:\R^+\longrightarrow \R^+$ is the scalar function defined by $H(x)=x+x^\lambda$ for some $\lambda\in (0,1)$ depending on $\alpha$, and $W_{1}$ stands for the Wasserstein distance associated to the Monge cost $(x,y)\to |x-y|$.
\end{thm}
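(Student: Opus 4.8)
This inequality is the $d^{\alpha}_{\ve}$-counterpart of the upper bound in Proposition \ref{comparisonprop}, and the plan is to prove it by regularising the multiscale construction of Xia and Morel--Santambrogio that underlies that proposition, in the same spirit in which Theorem \ref{localestimate} regularises Xia's single-source construction: replace each vector measure carried by a segment by its convolution with a scale-adapted kernel $\overline{\rho}_Q$ and correct the extra divergence created at each branching node. I would first make two reductions. Since $d^{\alpha}_{\ve}(f^+,f^-)$ depends only on $f=f^+-f^-$, and since the signed measure $f$ is unchanged when $(f^+,f^-)$ is replaced by $(f^+-f^+\wedge f^-,\,f^--f^+\wedge f^-)$, the Kantorovich--Rubinstein duality shows that this substitution changes neither $d^{\alpha}_{\ve}$ nor $W_1(f^+,f^-)$, while making the two densities mutually singular, so that $\|f^+\|_{L^2}^2+\|f^-\|_{L^2}^2=\|f\|_{L^2}^2$; one may thus assume $f^+\wedge f^-=0$, the common mass being some $m\le 1$. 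Moreover, when $\ve^{\gamma}\gtrsim L$ the statement is immediate: apply the ``otherwise'' case of Theorem \ref{localestimate} to $f^+$ and to $f^-$ (with the same kernel $\rho_Q$, since both have mass $m$) and use $d^{\alpha}_{\ve}(f^+,f^-)\le 2\big(d^{\alpha}_{\ve}(f^+,\rho_Q)+d^{\alpha}_{\ve}(f^-,\rho_Q)\big)$. So one may assume $\ve$ small.

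Fix a dyadic depth $J$ and let $\{Q_i\}$ be the dyadic cubes of generation $J$, of side $2^{-J}L$, with masses $a_i:=\int_{Q_i}f^+$ and $b_i:=\int_{Q_i}f^-$. Applying Theorem \ref{localestimate} inside each $Q_i$ to $f^+|_{Q_i}$ and to $f^-|_{Q_i}$ yields $u^{\mathrm{loc}}\in H^1$, supported in $\bigcup_i\widetilde Q_i$, with $\nabla\cdot u^{\mathrm{loc}}=f-(\mu^+_J-\mu^-_J)$, where $\mu^\pm_J:=\sum_i\rho_{Q_i}$ are the smeared atomic measures built from the $a_i$, resp.\ the $b_i$. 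Summing the local estimates, using disjointness of the supports, Jensen's inequality $\sum_i a_i^{\alpha}\le 2^{Jd(1-\alpha)}$ and $\sum_i\|f^\pm|_{Q_i}\|_{L^2}^2=\|f^\pm\|_{L^2}^2$, one gets $M^{\alpha}_{\ve}(u^{\mathrm{loc}})\le C\big(2^{-J(1-d(1-\alpha))}L+\ve^{\gamma_2}\|f\|_{L^2}^2\big)$ and $\|u^{\mathrm{loc}}\|_{L^1}\le Cm$. It then remains to build $w\in H^1$, supported in a $C\ve^{\gamma}$-neighbourhood of $Q$, with $\nabla\cdot w=\mu^+_J-\mu^-_J$ and a bound of the announced form on $M^{\alpha}_{\ve}(w)$ and $\|w\|_{L^1}$; then $u:=u^{\mathrm{loc}}+w$ solves $\nabla\cdot u=f$, and a \emph{single} application of $M^{\alpha}_{\ve}(u_1+u_2)\le 2\big(M^{\alpha}_{\ve}(u_1)+M^{\alpha}_{\ve}(u_2)\big)$ yields the estimate.

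For $w$ I would telescope $\mu^+_J-\mu^-_J$ over the scales $1\le j\le J$: at scale $j$, inside each $Q\in\mathcal Q_{j-1}$, take a vector measure carried by the segments $[c_Q,c_{Q'}]$, $Q'\in\mathcal C(Q)$, with coefficients the signed imbalances $g_j(Q'):=f^+(Q')-f^-(Q')$, convolved with $\overline{\rho}_{Q'}$, plus a Lemma \ref{diffusionradial} correction in each ball $B(c_{Q'},cR_{Q'})$ for the nodal divergence (including, at the finest scale, the reconciliation of the two smeared atoms of different widths). The singular analogue has $M^{\alpha}$-energy $\le CL\sum_{j\le J}2^{-j}\sum_{Q'\in\mathcal Q_j}|g_j(Q')|^{\alpha}$, and bounding $\sum_{Q'}|g_j(Q')|^{\alpha}\le 2^{jd(1-\alpha)}\big(\sum_{Q'}|g_j(Q')|\big)^{\alpha}$ (Jensen) together with $\sum_{Q'}|g_j(Q')|\le\min\big(2,\,C2^{j}W_1/L\big)$ --- the second inequality for a suitably translated dyadic grid, obtained by averaging, over translations, the probability that the two ends of an optimal coupling fall in different scale-$2^{-j}L$ cubes --- and splitting the geometric series at the scale $2^{-j}\sim W_1/L$, one recovers exactly the proof of Proposition \ref{comparisonprop}: the singular energy is $\le CL^{d(1-\alpha)}W_1^{1-d(1-\alpha)}$, using $d(1-\alpha)<1$ and $(d-1)(1-\alpha)>0$. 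Replacing segments by convolutions then turns this into the bound on $M^{\alpha}_{\ve}(w)$ by the three-step scheme of the proof of Theorem \ref{localestimate}: split $Q$ into the node set $N=\bigcup B(c_{Q'},cR_{Q'})$ and its complement, estimate $M^{\alpha}_{\ve}(w,N)$ via Lemma \ref{diffusionradial} and a Lemma \ref{malphafinite}-type summation, and estimate $M^{\alpha}_{\ve}(w,N^c)$ using that off $N$ the enlarged segments inherit the laminar structure of the dyadic cubes, so that $\int|\nabla w|^2$ is controlled with a \emph{universal} constant through an $L^{\infty}$--$L^1$ H\"older inequality instead of losing a factor equal to the number of scales. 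One finally takes $J\sim\log_2(L/W_1)$ when $W_1\gtrsim\ve^{\gamma}$, so that $2^{-J(1-d(1-\alpha))}\lesssim W_1^{1-d(1-\alpha)}$ and $W_1+C2^{-J}L\lesssim W_1$, and the regularisation radii at level $J$, of size $\ve^{\gamma}a_i^{(1-\gamma)/(d-1)}\le\ve^{\gamma}$, are then $\lesssim 2^{-J}L$, making the construction admissible.

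The main obstacle is precisely this estimate on $M^{\alpha}_{\ve}(w)$, for three intertwined reasons. First, the non-subadditivity of $M^{\alpha}_{\ve}$ forbids summing the $\sim J$ scales term by term: the construction must be organised, exactly as in Theorem \ref{localestimate}, so that off the node set the supports are nested and the quadratic term is controlled by a universal constant, and so that the quasi-triangle inequality is used only an $O(1)$ number of times. Second, the source and target smeared atoms sit at the same centers but have \emph{different} widths $\ve^{\gamma}a_i^{(1-\gamma)/(d-1)}$ and $\ve^{\gamma}b_i^{(1-\gamma)/(d-1)}$ as soon as $a_i\neq b_i$, so reconciling them costs an extra Dirichlet-type correction as in Proposition \ref{distdiffstate}; because the level-$J$ cubes need not sit at the ``diffusion level'' of these masses, the relation $D^{\delta}\sim\ve^{\gamma}\|\cdot\|_{L^2}^{(1-\gamma)/(d-1)}$ used there becomes only an inequality, and it is exactly this slack --- together with the exponent $\beta<1$ in the first term of $M^{\alpha}_{\ve}$ --- that produces the secondary exponent $\lambda\in(0,1)$ and forces the bound to be stated through $H(x)=x+x^{\lambda}$ rather than linearly in $W_1^{1-d(1-\alpha)}+\ve^{\gamma_2}\|f\|_{L^2}^2$. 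Third, one must treat the regime $W_1\lesssim\ve^{\gamma}$, in which no admissible depth makes the level-$J$ cubes larger than the kernel scale; there one caps $J\sim\log_2(L/\ve^{\gamma})$ and closes the estimate by interpolating the multiscale bound with the Dirichlet bounds of Theorem \ref{dirichlet} and Proposition \ref{distdiffstate}, which is again absorbed by the super-linear $H$. The bound $\|u\|_{L^1}\le C$ is the easy part: as in Theorem \ref{localestimate} it follows from $\|u^{\mathrm{loc}}\|_{L^1}\le Cm$ and from $\|Z_{Q}\|_{L^1}\lesssim|g_j(Q)|\,\op{diam}(Q)$ summed with Lemma \ref{malphafinite}, the series $\sum_{j}\sum_{Q\in\mathcal Q_j}|g_j(Q)|\,2^{-j}L$ remaining bounded.
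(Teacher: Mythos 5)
Your plan is a genuinely different architecture from the paper's: the paper fixes an optimal plan $\Pi$ for $(f^+,f^-)$, classifies the pairs by transport distance ($d_j\le |x-y|<d_{j+1}$, $d_j=(2^j-1)w$), partitions $Q$ into cubes of side $d_{j+1}$ for each class, applies Theorem \ref{localestimate} to each two-ended piece $(f^+_{jk},f^-_{jk})$, accepts the factor $J$ coming from quasi-subadditivity in \eqref{est1}, controls overlapping supports through the key observation \eqref{onlyoneterm}, and finally optimizes the free scale $w=W_1+F^\lambda$ (with $F:=\ve^{\gamma_2}\|f\|_{L^2}^2$), which is where $H(x)=x+x^\lambda$ comes from. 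You instead run one spatial dyadic hierarchy down to a capped depth and transport the signed imbalances $g_j(Q')=f^+(Q')-f^-(Q')$ along the skeleton, recovering the Xia/Morel--Santambrogio comparison by grid-translation averaging. The difficulty is that the step you describe as a transcription of the ``three-step scheme of the proof of Theorem \ref{localestimate}'' (node set via ``a Lemma \ref{malphafinite}-type summation'', off-node Dirichlet cross terms ``controlled with a universal constant'') is not a transcription at all. Every one of those estimates in the paper rests on the hierarchical structure of the weights: $\theta_Q=\sum_{Q'\in\mathcal{C}(Q)}\theta_{Q'}$ and $\theta_{Q'}\le\theta_Q$ for $Q'\subset Q$. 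This is what makes Lemma \ref{malphafinite} true, what lets $\sum_{Q'\subset Q}\|\nabla V_{Q'}\|_{L^1}$ recombine with $\|\nabla V_Q\|_{L^\infty}$ into $\theta_Q^\alpha\op{diam}(Q)$, and what guarantees (monotonicity of $R_Q$ along the tree) that the overflow of $Z_{Q'}$ near $c_{F(Q')}$ is absorbed by the father's node ball. None of this survives the passage to $|g_j(Q')|$: a cube with $g_j(Q)=0$ can have children carrying large cancelling imbalances, so $\sum_{Q'\subset Q}|g(Q')|$ is not controlled by $|g(Q)|$, Lemma \ref{malphafinite} is false for these weights, and the kernel radii are no longer monotone along the tree. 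The only global control you have is the per-generation bound $\sum_{Q'\in\mathcal{Q}_j}|g_j(Q')|\le\min(2,C2^jW_1/L)$ (which itself needs the translation argument to be run jointly over all generations and all the functionals in which these sums appear). Converting that into a bound on the cross-scale Dirichlet interactions of the $Z_{Q'}$ and of the nodal corrections is a genuinely new estimate that the proposal does not supply; a rough computation in $d=2$ suggests it can at best close with a strictly degraded exponent (of order $(1-d(1-\alpha))/3$ in $W_1$), so the claim that one ``recovers exactly the proof of Proposition \ref{comparisonprop}'' with a universal constant is not justified, and the whole bound on $M^\alpha_\ve(w)$ is left open at exactly the point where the paper's machinery (\eqref{onlyoneterm} together with Lemma \ref{malphafinite}) does the work.

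A second, related gap is the regime $W_1\lesssim\ve^\gamma$ and the uniform cap $J\sim\log_2(L/\ve^\gamma)$. With that cap, the estimate your local step produces is at least of order $m^\alpha 2^{-J}L\sim m^\alpha\ve^\gamma$ ($m$ being the common mass after removing $f^+\wedge f^-$), no matter how small $W_1$ and $F$ are; the sentence ``one closes the estimate by interpolating the multiscale bound with the Dirichlet bounds of Theorem \ref{dirichlet} and Proposition \ref{distdiffstate}'' is not an argument, and the global Dirichlet competitor does not help because its first term carries $\ve^{-\gamma_1}$. This term can in fact be absorbed by the superlinear part of $H$, but only through an additional observation you never make (by Cauchy--Schwarz on the fixed cube, $\|f\|_{L^2}\ge cL^{-d/2}\|f\|_{L^1}$, hence $F\ge c\ve^{\gamma_2}m^2$, and then $m^\alpha\ve^{\gamma(1-d(1-\alpha))}\le CF^\lambda$ for $\lambda\le\min\{(1-d(1-\alpha))/(d+1),\alpha/2\}$), and the same absorption must then be checked for the interconnection field and for the reconciliation, at each finest-scale center, of the two smeared atoms of masses $a_i\neq b_i$ and different widths, whose cost is again not covered by Proposition \ref{distdiffstate} as stated (that proposition compares $f$ with its own diffusion level, not two kernels of different widths with unequal masses). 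The paper sidesteps all of this by decoupling the stopping scale from $\ve^\gamma$: its cubes have side governed by $w=W_1+F^\lambda$, and whenever a kernel overflows its cube the loss is charged to the $L^2$ term via \eqref{onlyoneterm}. So either import that mechanism into your scheme, or supply the missing signed-imbalance summation lemma; as written, the proof does not go through.
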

\begin{rmk}\label{dependancetheta}
One can replace the condition $\int f^\pm=1$ by $\int f^\pm=\theta\geq 0$. Then, the constant $C$ will also depend on $\theta$: $C=C(\theta,\alpha,d,L)$. However, we can easily check that $C$ is locally bounded with respect to $\theta$, i.e. it is uniform for bounded values of $\theta$.
\end{rmk}

\begin{rmk}
It is tempting to think that estimate \eqref{globalest} also holds when $H(x)=x$ which would be the natural choice. Indeed, if $\ve$ is taken very small, since $M^\alpha_\ve$ $\Gamma$-converge to $M^\alpha$ and because of Proposition \ref{comparisonprop}, one can expect that $d^\alpha_\ve(f^+,f^-)\simeq d^\alpha(f^+,f^-)\leq CW_{1}(f^+,f^-)^{1-d(1-\alpha)}$. On the contrary, when $\ve$ is very large, because of Theorem \ref{dirichlet}, one can expect that $d^\alpha_\ve(f^+,f^-)\simeq \ve^{\gamma_2} \|f\|_{L^2}^2$. However, for technical reasons, due to the lack of subadditivity of the second term (Dirichlet energy) in the definition of $M^\alpha_\ve$, we were not able to reach the case $H(x)=x$.
\end{rmk}
\begin{proof}
Our method to prove this proposition is an adaptation of that of J.-M. Morel and F. Santambrogio in \cite{Morel:2007} (see also Proposition 6.16. page 64 in \cite{Bernot:2009}).

Up to replacing $(f^+,f^-)$ by $(f^+-f^+ \wedge f^-,f^--f^+\wedge f^-)$, one can assume that $f^+\wedge f^- =0$, where for all $x\in Q$, $(f^-\wedge f^+)(x)=\inf(f^-(x),f^+(x))$. Indeed, it is sufficient to note that, if $\mu^\pm$ are two measures with the same mass and $\nu$ is a positive measure on $Q$ then we have $W_1(\mu^++\nu,\mu^-+\nu)= W_1(\mu^+,\mu^-)$.

For the sake of simplicity, in all the proof, $C>0$ will denote some constant only depending on $\alpha$, $d$ and $L$ and big enough so that all the inequalities below are satisfied.

Let $f^+,f^-\in L^2_+(Q)$ be two densities on the cube $Q=(0,L)^d$ such that $\int_Q f^\pm=1$. Chose an optimal transport plan $\Pi$ between $f^+$ and $f^-$ for the Monge-Kantorovich problem associated to the cost $c(x,y)=|x-y|$. Hence $\Pi$ satisfies the constraint $P_\#^\pm\Pi=f^\pm (x)\d x$ where $P^+$ (resp. $P^-$) is the projection on the first variable $x$ (resp. the second variable $y$) and $\d x$ is the Lebesgue measure. Moreover we have
\begin{equation}\int_{Q}|x-y| \d\Pi(x,y)=W_1(f^+,f^-)=:W\label{Wass}.\end{equation}
So as to use the local estimate of the previous part, let us classify the set of ordered pairs $(x,y)$ with respect to the distance $|x-y|$. More precisely, for $j\geq 0$, set
$$X_j=\{(x,y)\in Q^2\;:\; d_j\leq |x-y| < d_{j+1}\},
$$
where $d_j=(2^j-1)\, w$ and $w\in (0,1)$ will be chosen later. In particular, $d_0=0$ and $X_j$ is empty if $d_j>\op{diam}(Q)$, i.e. $j> J:=\left\lfloor \ln_2\left(\frac{\op{diam}(Q)}{w}+1\right)\right\rfloor$. For this reason, one can restrict to integers $j\leq J\leq C(1+|\ln w|)$: we will assume that $d_j\leq \op{diam}(Q)$. Moreover, \eqref{Wass} immediately gives the estimate
\begin{equation}\label{estwp}
\sum_j d_j \theta_j\leq W\, ,\hspace{3pt}\quad\text{where}\quad\hspace{5pt}\theta_j=\Pi (X_j).
\end{equation}
Next, for each integer $j\in [1,J]$, consider a uniform partition of $Q$ into cubes $Q_{jk}$, $k=1,\dots, K_j$, with side length $d_{j+1}$.
It is easy to estimate $K_j$ by
\begin{equation}\label{estkj}K_j\leq C d_{j+1}^{-d}.\end{equation}
For $j\geq 0$, set 
$$\Pi_j=\Pi_{|X_j}\ ;\ \theta_j=\Pi(X_j);\ f^\pm_j =P_\#^\pm\Pi_j \quad\text{and}\quad f_j=f_j^+-f_j^-,$$
Clearly, one has
\begin{equation*}\label{sumfj}
\Pi=\sum_j \Pi_j\quad\text{and}\quad f^\pm =\sum_j f^\pm_j.
\end{equation*}
In the same way, for $j\geq 0$ and $1\leq k\leq K_j$, set
$$\Pi_{jk}=\Pi_{|X_j\,\cap\,  (Q_{jk}\times Q)}\ ;\ \theta_{jk}=\Pi_{jk}(Q^2)\quad\text{and}\quad f_{jk}^\pm=P^\pm_\#\; \Pi_{jk}$$
so that
$$\Pi_{j}=\sum_k \Pi_{jk};\ \theta_j=\sum_k \theta_{jk}\quad\text{and}\quad f^\pm_j =\sum_k f_{jk}^\pm.$$
$\Pi_{jk}$ represents the part of the transport plan $\Pi$ corresponding to points in $Q_{jk}$ which are sent at a distance comparable to $d_{j+1}$. In particular, $f^+_{jk}$ is supported on $Q_{jk}$ and $f^-_{jk}$ is supported on the cube $\widetilde{Q}_{jk}$ with the same center but twice the side length of $Q_{jk}$. As we did in \eqref{defrhoq}, let us define $\rho_{jk}$ the kernel associated to $Q_{jk}$ by
$$\rho_{jk}(x)=
\begin{cases}
(R_{jk})^{-d}\rho (R_{jk} (x-c_{jk}))&if $\theta_{jk}\neq 0$,\\
0&otherwise,
\end{cases}$$
where $\rho\in\c^1_c (\R^d,\R^+)$, $R_{jk} = \ve^\gamma \theta_{jk}^\frac{1-\gamma}{d-1}$ and $c_{jk}$ is the center of $Q_{jk}$. For the sake of simpli\-city, let us assume that $\op{supp}\rho$ is the unit ball centered at the origin. Let $B_{jk}:=B(c_{jk},r_{jk})$ be the smallest ball containing $\widetilde{Q}_{jk}$ and $\op{supp}\rho_{jk}=B(c_{jk},R_{jk})$: i.e. $r_{jk}=\op{max}\{R_{jk}, \op{diam}(Q_{jk})\}$. Thanks to Theorem \ref{localestimate}, it is possible to find a vector field $u_{jk}\in H^1_0(B_{jk})$ satisfying $\nabla\cdot  u_{jk}=f_{jk} :=f^+_{jk}-f^-_{jk}$\,,\; $\|u_{jk}\|_{L^1(B_{jk})}\leq C \theta_{jk}$ and 
\begin{equation}\label{estloc}M_{jk}:=M^\alpha_\ve(u_{jk})\leq C\, \{\theta_{jk}^{\alpha}d_{j+1} + \ve^{\gamma_2} \|f_{jk}\|^2_{L^2(B_{jk})}\}.
\end{equation}
Moreover, if $R_{jk}\geq d_{j+1}/2$, the first term in the right-hand side of \eqref{estloc} can be omitted since one has \begin{equation}\label{onlyoneterm}
\theta_{jk}^\alpha d_{j+1}\leq C\ve^{\gamma_2}\|f_{jk}\|_{L^2}^2.
\end{equation}
Indeed, in this case, writing $\theta:=\theta_{jk}$ and $R:=R_{jk}$, one has $\theta^\alpha d_{j+1}\leq 2\theta^\alpha R$ and, using $2-\alpha=\frac{(1-\gamma)(d+1)}{d-1}$, we get $\theta^\alpha R=[\theta^{\alpha-2}R^{1+d}][\theta^2 R^{-d}]=\ve^{\gamma_2} R^{-d}\theta^2$. Then, \eqref{onlyoneterm} follows from the fact that, by the Cauchy-Schwarz inequality, we have 
$$R^{-d} \theta^2\leq R^{-d}|B_{jk}|\int_{B_{jk}}(f_{jk})^2\leq C\int_{B_{jk}} (f_{jk})^2.$$
Now, let us define the vector field $u=\displaystyle\sum_{j,k} u_{jk}$, which satisfies
$$\nabla\cdot  u=\displaystyle\sum_{j,k} \nabla\cdot  u_{jk}=\displaystyle\sum_{j,k} f_{jk} =f:=f^+-f^- .$$
First note that
$$
\|u\|_{L^1(Q)}\leq  C\sum \|u_{jk}\|_{L^1(B_{jk})}
\leq 2C\sum\theta_{jk}=2C.$$
In order estimate the energy of $u$, a similar development of $\left\vert \sum \nabla u_{jk}\right\vert^2$ as in \eqref{malphaepsw1} and the Cauchy-Schwarz inequality give
\begin{equation}\label{est1}
 M^\alpha_\ve(u) \leq J \displaystyle\sum_{j=1}^J M^\alpha_\ve  \left(\displaystyle\sum_{k=1}^{K_j} u_{jk}\right)\leq C\; J\displaystyle\sum_j\left\{ \displaystyle\sum_{k}M_{jk}
+\displaystyle\sum_{(k,l)\in I_j} \sqrt{M_{jk}}\sqrt{M_{jl}}\right\},
\end{equation}
where $I_j$ stands for the set of pairs $(k,l)$ satisfying $k\neq l$, $\theta_{jk}\geq\theta_{jl}$ and $B_{jk}\cap B_{jl} \neq \emptyset$. We have to estimate the two terms in the right-hand side of \eqref{est1}.

\paragraph{Estimate of the first term in \eqref{est1}} We recall that $M_{jk} \leq \theta_{jk}^\alpha d_{j+1} +\ve^{\gamma_2} \|f_{jk}\|_{L^2(B_{jk})}^2$. For the second term, note that
\begin{equation}\label{antisubadditive}\textstyle\sum_{j,k} \|f_{jk}\|^2_{L^2}\leq \|f\|^2_{L^2}.\end{equation}
Indeed, since $f^+\wedge f^- =0$, for all $j,k$, one has $f_{jk}^+\wedge f_{jk}^- =0$ as well. In particular, $\|f_{jk}\|^2_{L^2(B_{jk})}= \|f_{jk}^+\|^2_{L^2(B_{jk})}+ \|f_{jk}^-\|^2_{L^2(B_{jk})}$\,,\; $\|f\|^2_{L^2(Q)}= \|f^+\|^2_{L^2(Q)}+ \|f^-\|^2_{L^2(Q)}$ and \eqref{antisubadditive} follows from the super-additivity of the power function $x\to |x|^p$ for $p\geq 1$: $|x+y|^p\geq |x|^p+|y|^p$ for $x,y\in\R$ whenever $xy\geq 0$.

For the first term, applying successively the Jensen inequality with power $\alpha\in (0,1)$, the H\"older inequality, \eqref{estwp} and the fact that $K_jd_{j+1}=C d_{j+1}^{1-d}$ (see \eqref{estkj}), one gets
\begin{align*}
\sum_{j,k}\theta_{jk}^\alpha d_{j+1}&\leq\sum_j d_{j+1} K_j [\theta_j/K_j]^\alpha=\sum_j [d_{j+1}\theta_j ]^\alpha [d_{j+1}K_j]^{1-\alpha}\\
&\leq \left(\sum_j \theta_{j}\,d_{j+1}\right)^\alpha  \left(\sum_{j} d_{j+1} K_j\right)^{1-\alpha}\\
&\leq C(w+W)^\alpha  \left(\sum_{j} [w(2^{j+1}-1)]^{1-d}\right)^{1-\alpha}\\
&\leq C' (w^\alpha+W^\alpha) w^{(1-d)(1-\alpha)}
\end{align*}
since $\theta_0 d_1\leq d_1=w$ (we cannot estimate this term by $W$ because $d_0=0$) and, because of \eqref{estwp}, $\sum_{j\geq1} \theta_{j}\,d_{j+1}\leq 3\sum_{j\geq 1} \theta_{j}\,d_{j} \leq 3W$. Finally, we get
\begin{equation}\label{estn1}
\textstyle\sum_{j,k}M_{jk} \leq C\left\{ w^{1-d(1-\alpha)}+W^\alpha w^{-(d-1)(1-\alpha)}+\ve^{\gamma_2} \|f\|_{L^2}^2\right\}.
\end{equation}

\paragraph{Estimate of the second term in \eqref{est1}}

Before following these computations, we need to understand what the condition ``$B_{jk}\cap B_{jl} \neq \emptyset\,$'' is meaning. Assume that $(k,l)\in I_j$. From $Q_{jk}\cap Q_{jl} = \emptyset $, we see that either $\op{supp}\rho_{jk}$ or $\op{supp}\rho^j_l$ is not included in $Q_{jk}$ (resp. $Q_{jl}$). Since, by definition of $I_j$, we have $\theta_{jk}\geq \theta_{jl}$, this implies that $R_{jk}\geq d_{j+1}/2$. Therefore, as we noticed after formula \eqref{estloc},
$$M_{jk}\leq \ve^{\gamma_2}\|f_{jk}\|_{L^2(B_{jk})}^2$$
and \eqref{onlyoneterm} also implies that
$$ \theta_{jl}^\alpha d_{j+1}\leq\theta_{jk}^\alpha d_{j+1} \leq C\ve^{\gamma_2}\|f_{jk}\|_{L^2(B_{jk})}^2.$$
Now, \eqref{estloc}, the subadditivity of the square root function, the preceding inequality, \eqref{antisubadditive} and Cauchy-Schwarz inequality give in turn
\begin{align*}
\dsum_{(k,l)\in I_j} \sqrt{M_{jk}}\sqrt{M_{jl}}&\leq  C\dsum_{(k,l)\in I_j} \sqrt{\ve^{\gamma_2}\|f_{jk}\|_2^2}\left(\sqrt{\ve^{\gamma_2}\|f_{jl}\|_{2}^2} 
+\sqrt{\theta_{jl}^\alpha d_{j+1} } \right)\\
&\leq  C\ve^{\gamma_2}\dsum_{(k,l)\in I_j}\|f_{jk}\|^2_{2}+\|f_{jk}\|_{2}\|f_{jl}\|_{2}\\
&\leq  C\ve^{\gamma_2} \left\{K_j\|f_j\|_{L^2(Q)}^2 +\sqrt{\sum_{k,l}\|f_{jk}\|_{2}^2} \sqrt{ \sum_{k,l}\|f_{jk}\|_{2}^2}\right\}\\
&\leq 2C\ve^{\gamma_2} K_j \|f_j\|^2_{L^2(Q)}.
\end{align*}
From $K_j\leq d_{j+1}^{-d}\leq 2^{-dj}w^{-d}$ and $\|f_j\|^2_{L^2(Q)}\leq \|f\|^2_{L^2(Q)}$, we obtain in the end that
\begin{equation}\label{estn2}
\dsum_j\displaystyle\sum_{(k,l)\in I_j} \sqrt{M_{jk}}\sqrt{M_{jl}}\leq Cw^{-d} \ve^{\gamma_2} \|f\|^2_{L^2}.
\end{equation}

\paragraph{End of the proof}
Let $F=\ve^{\gamma_2}\|f\|^2_{L^2}$. We remind the definition of $W=W_1(f^+,f^-)$. One can assume that $f^-\neq f^+$ so that $F,W>0$. Now, \eqref{est1}, \eqref{estn1}, \eqref{estn2} and the fact that $J\leq C (1+\ln w)$ yield
$$
M^\alpha_\ve  (u)\leq C(1+|\ln w|)\left\{ w^{\nu}+W^\alpha w^{\nu-\alpha}+w^{-d}F\right\},
$$
where $\nu :=1-d(1-\alpha)\in(0,1)$ and so $\alpha-\nu=-(d-1)(1-\alpha)<0$. Let us fix some $\delta\in (0,1)$ small enough so that $0<\nu\pm\delta<1$ and $\nu-\alpha\pm\delta<0$. For some constant $c$ depending on $\delta$, one has $1+|\ln w|\leq c(w^\delta+w^{-\delta})$ and so
$$M^\alpha_\ve  (u)\leq C\left\{ w^{\nu\pm\delta}+W^\alpha w^{\nu-\alpha\pm\delta}+w^{-d\pm\delta}F\right\},
$$
where the sum is taken over the values of $\pm 1$ ($+1$ or $-1$) in the right-hand side. Then, we make the choice $w=W+ F^\lambda >0$ for some $\lambda=\lambda(\alpha,d)>0$ which will be fixed later. Note that all the estimates above are valid only if $w<1$. However, if $W+F^\lambda\geq 1$ then the right-hand side of \eqref{globalest} is greater than some positive constant and \eqref{globalest} easily follows from Theorem \ref{localestimate} since $H(x)\geq x$. Thus, one can assume that $w\in (0,1)$.

Since $0<\nu\pm\delta<1$, we get $w^{\nu\pm \delta}\leq W^{\nu\pm \delta}+ F^{\lambda(\nu\pm \delta)}$ and, because $-d\pm\delta <0$, $\nu-\alpha\pm\delta<0$, we have $w^{\nu-\alpha\pm\delta}\leq W^{\nu-\alpha\pm\delta}$ and $w^{-d\pm\delta}\leq F^{\lambda (-d\pm\delta)}$ which gives
$$
M^\alpha_\ve  (u)\leq C\left\{ W^{\nu\pm\delta}+F^{\lambda (\nu\pm\delta )}+ W^{\nu\pm\delta}+F^{1+\lambda (-d\pm\delta)}\right\}.
$$
We fix $\lambda>0$ small enough so that $1+\lambda (-d\pm\delta)>0$: in this way, all the exponents in the preceding formula are positive. Finally, \eqref{globalest} follows from the fact that we have $W$, $F\leq 1$ as a consequence of $W$, $F\leq W^{1-d (1-\alpha)}+F$.
\end{proof}
\begin{rmk}
Since $\min\{w^\nu +w^{-d} F\; :\; w\in(0,1)\}=c F^{\frac{1}{d+\nu}}$ and $\frac{1}{d+\nu}<1$, one cannot obtain an estimate of the form $M^\alpha_\ve (u)\leq C(W+F)$ as expected. However, one could improve a bit \eqref{globalest} by a better estimate of the number of indices $l$ such that $(k,l)\in I_{j}$.
\end{rmk}

\section{A $\Gamma$-convergence result\label{gammacvsection}}
Let $\Omega\subset \R^2$ be a bounded open set and $\mu=\mu^+-\mu^-$ be a finite measure, where $\mu^\pm$ are two probability measures compactly supported on $\Omega$. We recall the definition of the set
$$\mathcal{M}_{div}(\Omega)=\{u :\Omega\to \R^2\,:\, u\text{ and }\nabla\cdot  u\text{ are finite measures on } \overline{\Omega}\}$$
which is endowed with the topology of weak star convergence on vector measures and their divergence. As weak star topology is never metrizable in infinite dimensional Banach spaces, the space $\mathcal{M}_{div}(\Omega)$ is not metrizable. Indeed, assume that $X$ is some infinite dimensional Banach space such that $X'$ is metrizable. In particular $X'$ admits a countable neighborhood basis $(V_n)_{n\geq 1}$ which one can assume to be of the form
$$V_n=\{\varphi\;:\; |\langle\varphi\;;x_i\rangle|<\varepsilon_n\text{ for }i=1,\dots,n\}$$
for some linearly independent family of vectors $(x_i)_{i\geq 1}\subset X$ and $\varepsilon_n>0$. Then the Hahn-Banach Theorem easily provides a sequence $(\varphi_n)_{n\geq 1}$ satisfying $\varphi_n(x_i)=0$ for all $i\leq n\in\mathbb{N}^*$ and $\|\varphi_n\|_{X'}=n$. In particular the sequence $(\varphi_n)_n$ weakly converges to $0$ as $n\to\infty$ which is a contradiction with the fact that $(\varphi_n)_n$ is norm unbounded. 

However, every bounded subsets of the dual space of a separable Banach space are metrizable for the weak star topology. In particular, for the natural norm $\|u\|_{\mathcal{M}_{div}(\Omega)}=|\nabla\cdot  u|(\Omega)+|u|(\Omega)$ given by the total variation of $u$ and its divergence, we know that all bounded subsets of $\mathcal{M}_{div}(\Omega)$ are metrizable: for all $M>0$, there exists a metric $d_M$ for the weak star convergence of $u$ and $\nabla\cdot  u$ on the set
$$\mathcal{M}_M(\Omega)=\{u\in \mathcal{M}_{div}(\Omega)\;:\; |u|(\Omega)+|\nabla\cdot  u|(\Omega)\leq M\}.$$
In \cite{Oudet:2011} the $\Gamma$-convergence of the functional sequence $M^\alpha_\ve$ to $M^\alpha$ was proved. Our aim is to prove that this property remains true when adding a divergence constraint. Since, for $u\in H^1(\Omega)$, one has $\nabla\cdot u\in L^2$, one cannot prescribe $\nabla\cdot u=\mu$ if $\mu$ is not in $L^2$. For this reason, we first have to define a regularization of $\mu$. Let $(f_\ve)_{\ve>0}\subset L^2$ be a sequence of $L^2$ functions weakly converging to $\mu$ as measures and satisfying
\begin{equation}\label{hypponfeps}
\int_\Omega f_\varepsilon(x)\d x=0\quad\text{and}\quad\ve^{\gamma_2}\|f_\ve\|_{L^2}^2\underset{\ve\to 0}{\longrightarrow} 0\quad .
\end{equation}
This choice is going to be useful for the proof of Theorem \ref{gammacv}. For example, we can define $f_\ve$ as
\begin{equation*}\label{fepsilon}f_\ve:=\rho_\ve\ast \mu ,\end{equation*}
where $\rho_\ve (x)=\ve^{-2\gamma}\rho(\ve^{-\gamma}x)$ for some compactly supported $\rho\in\c^1 (\R^d,\R^+)$ such that $\int_\Omega \rho =1$ and $\gamma$ is still defined as $\gamma=\frac{\gamma_2}{d+1}=\frac{\alpha+1}{3}$. Now, let us define the functionals $\overline{M}^\alpha_{\ve}$ (resp $\overline{M}^\alpha$) adding a divergence constraint on $u\in\mathcal{M}_{div}(\Omega)$:
\begin{equation*}
\overline{M}^\alpha (u) = 
\begin{cases}
M^\alpha (u) & if $\nabla\cdot  u=\mu ,$ 
\\ +\infty  &otherwise,
\end{cases}
\end{equation*}
\begin{equation*}\label{mbar}
\overline{M}^\alpha_{\ve} (u) = 
\begin{cases}
M^\alpha_\ve (u) & if $\nabla\cdot  u=f_\ve$, 
\\ +\infty  &otherwise.
\end{cases}
\end{equation*}
The main result of this section is Theorem \ref{gammacv}:
\begin{thm*}\label{gammacvlast}
There exists a constant $c_0$ such that the functional sequence $(\overline{M}^\alpha_{\ve})_{\ve>0}$ $\Gamma$-converges to $c_0\overline{M}^\alpha$ as $\ve\to 0$. Moreover $c_0$ is given by the minimum value for the minimization problem $\eqref{hilliard}$.
\end{thm*}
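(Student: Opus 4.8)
The plan is to prove the two $\Gamma$-convergence inequalities separately, in the $d=2$ setting of this section, with $\Gamma$-convergence in $\mathcal{M}_{div}(\Omega)$ understood through the metrics $d_M$ on the bounded sets $\mathcal{M}_M(\Omega)$. Throughout, $c_0$ denotes the constant produced by Theorem~\ref{gammaconvergenceoudet}; that it equals the value of the relevant cell problem is the characterisation already obtained in \cite{Oudet:2011}, so the only new content is the compatibility of the $\Gamma$-limit with the divergence constraint. Recall also that \eqref{irrigationprop} gives $\alpha>1/2$, which will be used below.

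\emph{Lower bound.} Let $u_\ve\to u$ in $\mathcal{M}_{div}(\Omega)$; assume $\liminf_\ve\overline{M}^\alpha_\ve(u_\ve)<+\infty$ and, along a subsequence realising this liminf, that $\overline{M}^\alpha_\ve(u_\ve)=M^\alpha_\ve(u_\ve)$ stays bounded, hence $\nabla\cdot u_\ve=f_\ve$. Since convergence in $\mathcal{M}_{div}(\Omega)$ forces $\nabla\cdot u_\ve\rightharpoonup\nabla\cdot u$ and $f_\ve\rightharpoonup\mu$, we get $\nabla\cdot u=\mu$, i.e.\ $\overline{M}^\alpha(u)=M^\alpha(u)$. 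Applying the $\Gamma$-liminf part of Theorem~\ref{gammaconvergenceoudet} to the same convergence $u_\ve\to u$ then yields $\liminf_\ve\overline{M}^\alpha_\ve(u_\ve)=\liminf_\ve M^\alpha_\ve(u_\ve)\ge c_0M^\alpha(u)=c_0\overline{M}^\alpha(u)$. This direction uses nothing from Sections~\ref{localestimatesection}--\ref{comparisonsection}.

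\emph{Upper bound.} Fix $u$ with $\nabla\cdot u=\mu$ and $M^\alpha(u)<+\infty$ (otherwise there is nothing to prove). By the density of polyhedral vector measures in the $M^\alpha$-energy (\cite{Xia:2003,Bernot:2009}), choose polyhedral $u_n\to u$ in $\mathcal{M}_{div}(\Omega)$ with $\nabla\cdot u_n=:\mu_n$ finite atomic, $\mu_n\rightharpoonup\mu$ and $M^\alpha(u_n)\to M^\alpha(u)$; a diagonal extraction will then build the recovery sequence for $u$. For fixed polyhedral $u_n$, the $\Gamma$-limsup part of Theorem~\ref{gammaconvergenceoudet} gives $v_\ve\to u_n$ with $M^\alpha_\ve(v_\ve)\to c_0M^\alpha(u_n)$, and — $u_n$ being a finite graph — this $v_\ve$ may be taken of the explicit Oudet--Santambrogio form: each edge convolved at its optimal scale $R_Q=\ve^\gamma\theta_Q^{(1-\gamma)/(d-1)}$, the extra divergence at each interior node being killed by the radial Lemma~\ref{diffusionradial} as in the proof of Theorem~\ref{localestimate}. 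Thus $\nabla\cdot v_\ve=(\mu_n)_\ve$ is the mollification of $\mu_n$ at the scales $R_Q$. I then repair the divergence: applying Theorem~\ref{comparisonwithwasserstein} (with the mass dependence of Remark~\ref{dependancetheta}; the masses here are bounded) to the zero-mean function $h_\ve:=f_\ve-(\mu_n)_\ve$, I get $w_\ve\in H^1$ with $\nabla\cdot w_\ve=h_\ve$ and
$$M^\alpha_\ve(w_\ve)\le C\,H\!\left(W_1(h_\ve^+,h_\ve^-)^{\,1-d(1-\alpha)}+\ve^{\gamma_2}\|h_\ve\|_{L^2}^2\right).$$
Setting $u_\ve:=v_\ve+w_\ve$ gives $\nabla\cdot u_\ve=f_\ve$, and by subadditivity of $t\mapsto t^\beta$ on the first term of $M^\alpha_\ve$ together with Cauchy--Schwarz on the cross term $\ve^{\gamma_2}\!\int\nabla v_\ve:\nabla w_\ve$ one obtains $M^\alpha_\ve(u_\ve)\le\big(\sqrt{M^\alpha_\ve(v_\ve)}+\sqrt{M^\alpha_\ve(w_\ve)}\big)^2$.

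\emph{Vanishing of the correction; the main obstacle.} It remains to see $M^\alpha_\ve(w_\ve)\to0$. For the Dirichlet remainder, $\ve^{\gamma_2}\|h_\ve\|_{L^2}^2\le 2\ve^{\gamma_2}\|f_\ve\|_{L^2}^2+2\ve^{\gamma_2}\|(\mu_n)_\ve\|_{L^2}^2$, the first term vanishing by \eqref{hypponfeps} and the second being $C\ve^{\gamma}\sum_i\theta_i^{2\gamma}$ (using $\gamma_2=3\gamma$ in $d=2$), which tends to $0$ for fixed $n$: this is where the optimal scale pays off, $\gamma_2-d\gamma=\gamma>0$, the divergence of $v_\ve$ being concentrated near the nodes and thus far smaller in $L^2$ than $\nabla v_\ve$. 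For the Wasserstein term, Kantorovich duality identifies $W_1(h_\ve^+,h_\ve^-)$ with the flat norm of $f_\ve-(\mu_n)_\ve$, which is $\le\|f_\ve-\mu\|_{\flat}+\|\mu-\mu_n\|_{\flat}+\|\mu_n-(\mu_n)_\ve\|_{\flat}\to\|\mu-\mu_n\|_{\flat}$ as $\ve\to0$, and $\|\mu-\mu_n\|_{\flat}\to0$ as $n\to\infty$. Hence along a suitable diagonal $n=n(\ve)\to\infty$ we get $M^\alpha_\ve(w_\ve)\to0$, so $\limsup_\ve M^\alpha_\ve(u_\ve)\le c_0M^\alpha(u)=c_0\overline{M}^\alpha(u)$; and $u_\ve\to u$ in $\mathcal{M}_{div}(\Omega)$ because $v_\ve\to u_n\to u$ while $w_\ve$ can be built (performing the correction of Theorem~\ref{comparisonwithwasserstein} on an auxiliary grid of side $\delta\to0$, which is licit since $\alpha>1/2$) so as to also satisfy $\|w_\ve\|_{L^1}\to0$, hence $w_\ve\rightharpoonup0$. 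The main obstacle is precisely this assembly: the Dirichlet part of $M^\alpha_\ve$ is not subadditive (Proposition~\ref{propnotsubadditive}), so the divergence correction cannot merely be bounded but must be \emph{asymptotically free}; this is exactly why one needs the refined estimate of Theorem~\ref{comparisonwithwasserstein} with its $\ve^{\gamma_2}\|\cdot\|_{L^2}^2$ remainder (a bare $W_1$ bound would not do), the control of the cross term $\ve^{\gamma_2}\!\int\nabla v_\ve:\nabla w_\ve$, and the scaling identity $\gamma_2=(d+1)\gamma$ that forces $\ve^{\gamma_2}\|(\mu_n)_\ve\|_{L^2}^2\to0$. Combining the two inequalities yields the $\Gamma$-convergence, the identification of $c_0$ being that of \cite{Oudet:2011}.
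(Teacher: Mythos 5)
Your strategy is essentially the one the paper follows: reduce the $\Gamma$-$\limsup$ to graph measures by density in energy, build the explicit convolution recovery sequence with the optimal-profile kernel and radial corrections at the nodes (this is the content of the paper's Lemma~\ref{limsup}, which you sketch; the paper uses Theorem~\ref{dirichlet} there where you use Lemma~\ref{diffusionradial} -- immaterial), repair the divergence mismatch with $f_\ve$ by Theorem~\ref{comparisonwithwasserstein} together with Remark~\ref{dependancetheta}, check via \eqref{hypponfeps} and the node $L^2$ estimate that the correction is asymptotically energy-free, and conclude by a diagonal argument; your $\bigl(\sqrt{M^\alpha_\ve(v_\ve)}+\sqrt{M^\alpha_\ve(w_\ve)}\bigr)^2$ inequality plays the role of the paper's last lemma, and swapping the order (correct the divergence at fixed $n$, then diagonalize, instead of the paper's Lemma~\ref{lembd} which diagonalizes first with the quantitative bound $\ve^{\lambda}\|\nabla\cdot u_\ve\|_{L^2}\leq C$ and corrects once) changes only the bookkeeping. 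The $\liminf$ part is treated identically.

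The one step that is not justified as written is the convergence $u_\ve=v_\ve+w_\ve\to u$ in $\mathcal{M}_{div}(\Omega)$, i.e.\ $w_\ve\rightharpoonup 0$. You assert $\|w_\ve\|_{L^1}\to 0$ ``by performing the correction of Theorem~\ref{comparisonwithwasserstein} on an auxiliary grid of side $\delta\to 0$''; Theorem~\ref{comparisonwithwasserstein} only gives $\|w_\ve\|_{L^1}\leq C$, and a cell-by-cell correction on a $\delta$-grid is not licit as such, because the residual divergence $h_\ve$ has no reason to have zero mean on each $\delta$-cell: mass must be transported between cells, and that transport is exactly what the $W_1$ term in the theorem accounts for, so the device you invoke does not produce the claimed $L^1$ smallness. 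The conclusion you need is nonetheless easy to obtain with tools you already have: since $\|w_\ve\|_{L^1}$ and $\|\nabla\cdot w_\ve\|_{L^1}=\|h_\ve\|_{L^1}$ are bounded, $(w_\ve)$ is weak-$*$ relatively compact in $\mathcal{M}_{div}(\Omega)$, and any limit point $w$ satisfies $c_0M^\alpha(w)\leq\liminf_\ve M^\alpha_\ve(w_\ve)=0$ by the $\Gamma$-$\liminf$ of Theorem~\ref{gammaconvergenceoudet}, hence $w=0$; together with $h_\ve\rightharpoonup 0$ this gives $w_\ve\to 0$ in $\mathcal{M}_{div}(\Omega)$ (this is the paper's argument). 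Alternatively one can retrace the proof of Theorem~\ref{comparisonwithwasserstein} and keep the finer bound $\|u_{jk}\|_{L^1}\leq C\,r_{jk}\theta_{jk}$ coming from Theorem~\ref{localestimate}, which does yield $\|w_\ve\|_{L^1}\to 0$ along your diagonal; but some such argument must be supplied, and the ``auxiliary grid'' as described is not it.
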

We first remind how to build a recovery sequence in the case of a mass $\theta$ flowing on a single segment $S$, i.e. $u=\theta\hone_{|S}$. To this aim, we need to find a structure close to $u$ which is almost optimal for $M^\alpha_\ve$. We proceed by a slicing argument:

Let $u$ be any vector measure in $\mathcal{M}_{div}(\Omega)$. Take some $\nu\in S^{1}:=\{x\in\R^2\;:\; |x|=1\}$ which has to be thought as the tangent vector to $S$ in the case where $u=\theta\hone_{|S}$. Let us consider $v=[(u\cdot \nu)_+]_{|\nu^\perp}$ (restriction on $\nu^\perp$ of the positive part of $u\cdot \nu$) the flux of $u$ across the hyperplane $\nu^\perp =\{x\in\R^2\;:\; x\cdot \nu=0\}$ and assume that $\int v=\theta$. Then $M^\alpha_\ve(u)$ can be controlled from below by integrals on subintervals of $\R\nu$ of the following Cahn-Hilliard type energy (see \cite{Cahn:1958} for physical motivations):
$$F^\beta_\ve (v)=\ve^{-\gamma_1}\int_\R v^\beta+\ve^{\gamma_2}\int_\R |\nabla v|^2.$$
This kind of models for droplets equilibrium was studied by G. Bouchitt\'e, C. Dubs and P. Seppecher in \cite{Bouchitte:1996} for instance (see also \cite{Bouchitte:1994}). $F^\beta_\ve (v)$ can be renormalized through the formula: $v(x)=\theta R_{\theta,\ve}^{-d} w(R_{\theta,\ve}^{-1} x)$, where $R_{\theta,\ve}=\ve^\gamma\theta^{\frac{1-\gamma}{d-1}}$. Then, the constraint $\int v=\theta$ turns into $\int w=1$ and $F^\beta_\ve (v)=\theta^\alpha F^\beta (w)$, where
$$F^\beta (w)=\left\{ \int_\R w^\beta+\int_\R |\nabla w|^2 \right\}.$$
Then, the existence of an optimal profile $w$ is given by
\begin{lem}\label{chan}There exists a profile $w\in H^1_{loc}(\R,\R^+)$ solution of the minimization problem 
\begin{equation}\label{hilliard}\min\left\{\int_\R w^\beta+\int_\R |w'|^2\;:\; w\in H^1_{loc} (\R,\R^+)\quad\text{and}\quad \int_{\R}w=1\right\}.\end{equation}
Moreover, $w$ is compactly supported, Lipschitz continuous on $\R$ and $\,\mathcal{C}^\infty$ inside its support, i.e. on the open set $\{w>0\}$. Last of all, it is possible to choose $w$ such that it is even and non-increasing on $\R^+$.
\end{lem}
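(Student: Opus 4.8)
The strategy is the direct method of the calculus of variations, carried out in the class of symmetric non‑increasing competitors, followed by an ODE analysis of the Euler--Lagrange equation to extract the qualitative properties of the minimizer.

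\emph{Reduction and existence.} If $w\ge 0$ is admissible, its symmetric decreasing rearrangement $w^{*}$ is still admissible: equimeasurability gives $\int_{\R}w^{*}=\int_{\R}w=1$ and $\int_{\R}(w^{*})^{\beta}=\int_{\R}w^{\beta}$, while the one–dimensional Pólya--Szegő inequality gives $\int_{\R}|(w^{*})'|^{2}\le\int_{\R}|w'|^{2}$; hence $F^{\beta}(w^{*})\le F^{\beta}(w)$. So it suffices to minimize over functions that are even and non‑increasing on $\R^{+}$, and such a minimizer is automatically even and non‑increasing, which is the last assertion of the lemma. The infimum $c_{0}$ is finite, e.g.\ by testing with the tent function $x\mapsto(1-|x|)_{+}$. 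Let $(w_{n})$ be a minimizing sequence of even non‑increasing functions, so $\int_{\R}w_{n}^{\beta}\le C_{0}$ and $\int_{\R}|w_{n}'|^{2}\le C_{0}$. Compactness comes from two elementary observations. Since $\int_{\R}w_{n}=1$ and $w_{n}$ is non‑increasing on $\R^{+}$, one has $w_{n}(x)\le\frac{1}{2|x|}$; and since $\beta<1$, on the set $\{w_{n}\le m\}$ one has $w_{n}^{\beta}=w_{n}\,w_{n}^{\beta-1}\ge m^{\beta-1}w_{n}$, whence $\int_{\{w_{n}\le m\}}w_{n}\le m^{1-\beta}C_{0}$. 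Since $\{w_{n}>m\}\subset\left(-\frac{1}{2m},\frac{1}{2m}\right)$, these combine to give $\int_{|x|>R}w_{n}\le\eta$ uniformly in $n$ once $R=R(\eta)$ is large enough: no mass escapes to infinity. Moreover the bound on $\int|w_{n}'|^{2}$ makes $(w_{n})$ uniformly $\tfrac12$‑Hölder, hence (using $\int w_{n}=1$ and monotonicity) uniformly bounded on $\R$; by Arzelà--Ascoli and a diagonal argument $w_{n}\to w$ locally uniformly, with $w\ge 0$, even and non‑increasing.

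\emph{Passage to the limit and regularity.} The tightness above gives $\int_{\R}w=1$, Fatou gives $\int_{\R}w^{\beta}\le\liminf\int_{\R}w_{n}^{\beta}$, and weak lower semicontinuity of the $L^{2}$‑norm (the weak $L^{2}$‑limit of $w_{n}'$ being $w'$, since $w_{n}\to w$ in $\mathcal{D}'$) gives $\int_{\R}|w'|^{2}\le\liminf\int_{\R}|w_{n}'|^{2}$; hence $F^{\beta}(w)\le c_{0}$, so $w$ is a minimizer, and incidentally $c_{0}=F^{\beta}(w)\ge\int_{\R}w^{\beta}>0$. On the open set $\{w>0\}$ the perturbations $w+t\varphi$ with $\varphi\in\c^{\infty}_{c}(\{w>0\})$ and $\int\varphi=0$ are admissible for small $|t|$, which yields the Euler--Lagrange equation $2w''=\beta w^{\beta-1}-\lambda$ in $\mathcal{D}'(\{w>0\})$ for some Lagrange multiplier $\lambda\in\R$. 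Since $w$ is locally bounded away from $0$ there and $s\mapsto s^{\beta-1}$ is smooth on $(0,\infty)$, a standard bootstrap starting from $w\in C^{0}$ gives $w\in\c^{\infty}(\{w>0\})$.

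\emph{First integral, compact support and Lipschitz bound.} Multiplying the Euler--Lagrange equation by $w'$ and integrating gives $|w'|^{2}=w^{\beta}-\lambda w+C$ on $\{w>0\}=(-a,a)$ (an interval, by symmetry and monotonicity), and evaluating at $0$ fixes $C=\lambda w_{0}-w_{0}^{\beta}$ with $w_{0}:=w(0)=\|w\|_{L^{\infty}}$. If $a=+\infty$, then $w(x)\to 0$ at infinity, so $|w'(x)|^{2}\to C$, which forces $C=0$ (otherwise $|w'|$ stays bounded away from $0$, impossible for a non‑negative non‑increasing $w$); hence $\lambda=w_{0}^{\beta-1}$ and $|w'|^{2}=w^{\beta}-w_{0}^{\beta-1}w$ on $(0,\infty)$, with $w$ strictly decreasing there. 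But then the length of $(0,\infty)$ would equal $\int_{0}^{w_{1}}\bigl(w^{\beta}-w_{0}^{\beta-1}w\bigr)^{-1/2}\,\d w$ (with $w_{1}=w(1)$), which is finite since the integrand is $\sim w^{-\beta/2}$ near $0$ and $\beta/2<1$ --- a contradiction. Therefore $a<+\infty$: $w$ has compact support $[-a,a]$ with $w(a)=0$. Finally the same first integral gives $|w'|^{2}=w^{\beta}-\lambda w\le w_{0}^{\beta}$ on $(-a,a)$ while $w'=0$ outside, so $w$ is globally Lipschitz (in fact $C^{1}$, with $w'(\pm a)=0$), which completes the proof.

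\emph{Main obstacle.} The genuinely delicate point is the compactness of the minimizing sequence: a priori, mass could be lost at infinity, and ruling this out is precisely where the sub‑linearity of $s\mapsto s^{\beta}$ (i.e.\ $\beta<1$) enters, through the tail bound $\int_{\{w_{n}\le m\}}w_{n}\le m^{1-\beta}\int_{\R}w_{n}^{\beta}$. Everything else is a routine ODE analysis; the compact‑support statement is the familiar ``dead core'' effect for $w''\sim w^{\beta-1}$ with $\beta-1\in(-1,0)$, here obtained from the finiteness of $\int_{0}w^{-\beta/2}\,\d w$.
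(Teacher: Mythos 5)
Your proof is correct and follows essentially the same route as the paper: reduction to even, non-increasing competitors via symmetric decreasing rearrangement and the one-dimensional P\'olya--Szeg\H{o} inequality, tightness of the minimizing sequence obtained from the Markov bound $w_n(R)\le \frac{1}{2R}$ combined with the sublinearity of $s\mapsto s^\beta$ (your $\int_{\{w_n\le m\}}w_n\le m^{1-\beta}\int_\R w_n^\beta$ is exactly the paper's estimate $\int_{|x|>R}w_n\le (2R)^{\beta-1}F^\beta(w_n)$), lower semicontinuity, then the Euler--Lagrange equation on $\{w>0\}$ and its first integral. The only real divergence is the compact-support step: the paper integrates the Euler--Lagrange equation to get $w'(x)\to+\infty$ when the support is unbounded, whereas you use the first integral to force the integration constant to vanish (hence $\lambda=w_0^{\beta-1}$) and conclude by finiteness of the arclength integral $\int_0 \bigl(w^\beta-w_0^{\beta-1}w\bigr)^{-1/2}\,\d w$ near $w=0$ (the classical finite-extinction computation); both work, though your ``length of $(0,\infty)$'' should read length of $(1,\infty)$ (or of any half-line where $w<w_0$), a harmless slip. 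One caution: in the final step you write the first integral as $|w'|^2=w^\beta-\lambda w$ on $(-a,a)$, i.e.\ with the constant $C=\lambda w_0-w_0^\beta$ set to zero, but that was established only under the contradiction hypothesis $a=+\infty$; consequently the parenthetical claim that $w$ is $C^1$ with $w'(\pm a)=0$ is not justified as written. This does not affect the lemma: the Lipschitz bound follows anyway because $|w'|^2=w^\beta-\lambda w+C$ is bounded on $(-a,a)$ since $w$ is bounded, which is precisely the paper's argument.
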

\begin{rmk}Although we restrict to the two dimensional case, some works by G. Bouchitt\'e, C. Dubs and P. Seppecher (see \cite{Dubs:1998}) suggest that Lemma \ref{chan} and Theorem \ref{gammacv} could be generalized in every dimension as well. However, the aim of this paper is to use the tools of section \ref{comparisonsection} so as to establish the $\Gamma -\limsup$ property for functionals $\overline{M}^\alpha_\ve$ (with divergence constraint) and, from the point of view of the complexity of the proof, this is independent of the dimension. Since the $\Gamma -\liminf$ property was only established in the $2D$ case in \cite{Oudet:2011}, we prefer to stay in this framework. Actually, the difficulty to prove a $\Gamma$-convergence result of $\overline{M}^\alpha_\ve$ (resp. $M^\alpha_\ve$) to $\overline{M}^\alpha$ (resp. $M^\alpha$) in every dimension would concern the $\Gamma -\liminf$ part and this is not the purpose of this paper.
\end{rmk}
\begin{rmk}Note that the minimum value in \eqref{hilliard} is related to the best constant in the one-dimensional Gagliardo-Nirenberg inequality $\int_\R |u|\leq C \left(\int_\R |u'|^2\right)^{\frac{1-\beta}{2+\beta}}\left(\int_\R |u|^\beta\right)^{\frac{3}{2+\beta}}$:
$$\frac{1}{C}=\inf\left\{\left(\displaystyle\int_\R |u'|^2\right)^{\frac{1-\beta}{2+\beta}}\left(\displaystyle\int_\R u^\beta\right)^{\frac{3}{2+\beta}}\;:\; u\in H^1_{loc}(\R,\R^+)\quad \text{and}\quad\int_\R u=1\right\}.$$
\end{rmk}
\begin{proof}
First notice that there exists a finite energy configuration, i.e. $w\in H^1(\R,\R)$ such that $F^\beta (w)<+\infty$. Indeed, every compactly supported and nonnegative $\mathcal{C}^1$ function has finite energy. Let take a minimizing sequence, i.e. $(w_n)_n\subset H^1(\R)\subset \mathcal{C}^0(\R)$ such that $F^\beta (w_n)\to c_\beta$. One can assume that $w_n$ is even and non-increasing on $\R^+$. Indeed if $w^\ast$ stands for the spherical rearrangement of some $w\in H^1(\R,\R^+)$, then one has $\int_\R |w^\ast|^\beta=\int_\R |w|^\beta$ and the classical Polya-Szego Theorem states that the spherical rearrangement reduces the Dirichlet energy of $w$:
\begin{equation*}
\label{PolyaSzego}
\int_{\R} |(w^*) '|^2 \leq \int_{\R} |w'|^2 \quad .
\end{equation*}
In particular $F^\beta (w^\ast)\leq F^\beta (w)$ as announced. Since $(w_n')_n$ is bounded in $L^2(\R)$, one can assume that it weakly converges in $L^2(\R)$. Moreover, as $w_n\geq 0$ a.e. and $\int_\R w_n=1$, $(w_n)_n$ is bounded in $L^1(\R)$. Thanks to the Poincar\'e-Wirtinger inequality, one deduces that $(w_n)_n$ is bounded in $H^1_{loc}(\R)$. Up to extraction, one can assume that $(w_n)_n$ weakly converges in $H^1_{loc}(\R)$. Let call $w\in H^1_{loc}(\R)$ the limit. In particular, $(w_n)_n$ strongly converges to $w$ in $L^1_{loc}(\R)$ and so $w$ is even, nonnegative and non-increasing on $\R^+$. Moreover, the Fatou lemma and the weak convergence of $w_n'$ yields
$$F^\beta (w)\leq\liminf\limits_{n\to\infty} F^\beta(w_n)=c_\beta\quad .$$
In order to prove that $w$ is a global minimizer it remains to prove that $w$ satisfies the constraint $\int w=1$. Indeed, from the Fatou Lemma we can only deduce that $\int w\leq 1$. One has to prove the strong convergence of $w_n$ in $L^1(\R)$. Since $w_n$ converges in $L^1_{loc}$, it is enough to prove that the sequence $(w_n)$ is tight. Let $R>0$. For all $n\geq 1$, since $w_n$ is non increasing on $[0,R]$, one has $\|w_n\|_{L^\infty (\{x\;:\; |x|>R)}=w_n(R)$ and Markov's inequality yields $w_n(R)\leq\frac{1}{2R}\int_\R w_n=\frac{1}{2R}$. Hence
\begin{equation*}
\int_{|x|>R}w_n(x)\d x\leq w_n(R)^{1-\beta} \int_\R w_n^\beta(x)\d x \leq (2R)^{\beta-1} F^\beta(w_n)\leq \frac{C}{R^{1-\beta}}\quad
\end{equation*}
for some constant $C>0$ non depending on $n$ which implies that $(w_n)_n$ is tight since $1-\beta>0$. Now, let check the regularity of $w$: Lipschitz continuous and smooth inside its support. Note that we already know that $w\in\mathcal{C}^{0,1/2}(\R)$ thanks to the Sobolev embedding $H^1_{loc}(\R)\subset\mathcal{C}^{0,1/2}(\R)$. Let check the regularity of $w$ inside its support. Since $w$ is even and non-increasing on $\R^+$, the set $\{x\;:\; w(x)>0\}$ is an interval $(-R,R)$ for some $R>0$. Since $w$ is a minimizer of the minimizing problem \eqref{hilliard}, $w$ also satisfies the following Euler-Lagrange equation
\begin{equation}
\label{EL_w}
\forall x\in (-R,R),\; -w''(x)+\beta w(x)^{\beta-1}=\lambda\quad ,
\end{equation}
where $\lambda\in\R$ is the Lagrange multiplier associated to the volume constraint $\int w=1$. Note that $\lambda=-w''(0)+\beta w(0)^{\beta-1}>0$. Indeed, since $x=0$ is a global maximum of $w$, $w'(0)=0$ and $w''(0)\leq 0$. From \eqref{EL_w}, one deduces that $w$ is smooth on $(-R,R)$. Now, multiplying \eqref{EL_w} by $w'$ and integrating it on $[0,x]$ or $[x,0]$ yields
$$\forall x\in (-R,R),\; \frac{w'(x)^2}{2}=w(x)^\beta-\lambda w(x)+\lambda w(0)-w(0)^\beta\quad .$$
Since $w$ is bounded on $\R$, we deduce that $w$ is Lipschitz continuous on $\R$. Last of all, we prove that $w$ is compactly supported, i.e. $R<\infty$. Assume by contradiction that $R=\infty$. Then $f\in\mathcal{C}^\infty(\R)$, $f>0$ on $\R$ and integrating \eqref{EL_w} yields
$$\forall x\in\R,\; w'(x)=\int_0^x (\beta w(y)^{\beta-1}-\lambda)\d y\quad .$$
Since $w(x)\underset{x\to\infty}{\longrightarrow}0$ and $\beta-1<0$, $w(x)^{\beta-1}\underset{x\to\infty}{\longrightarrow}+\infty$ and the right hand side of the preceding equation goes to $+\infty$ as well. Thus $w'(x)\underset{x\to\infty}{\longrightarrow}+\infty$ which is a contradiction.
\end{proof}
Now let $w_{\theta,\ve}$ be defined by $w_{\theta,\ve}(x)=\theta R_{\theta,\ve}^{-d}w (R_{\theta,\ve}^{-1} x)$ where $w$ is the optimal profile of Lemma \ref{chan} (satisfying all claimed regularity and symmetry properties) and let us introduce the kernel $\rho_{\theta,\ve}$ associated to $w_{\theta,\ve}$, given by the following lemma
\begin{lem}
There exists a bounded and compactly supported radial kernel $\rho_{\theta,\ve}\in L^\infty_c(\R^2,\R^+)$ such that $ w_{\theta,\ve}$ is the projection of $\rho_{\theta,\ve}$ on the axis $(x_1=0)$: 
$$\Pi^2_\sharp \rho_{\theta,\ve}(x)\d x=w_{\theta,\ve}(x_2)\d x_2,$$
where $\Pi^2$ stands for the projection on the second variable, $\d x$ (resp. $\d x_2$) is the Lebesgue measure on $\R^2$ (resp. $\R$) and $\Pi_\sharp\mu$ stands for the pushforward of some measure $\mu$ by $\Pi:\R^2\to\R$. Moreover, one can choose $\rho_{\theta,\varepsilon}$ of the form $\rho_{\theta,\varepsilon}(x)=R_{\theta,\varepsilon}^{-2}\rho(R_{\theta,\varepsilon}^{-1}x)$ for some $\rho\in L^\infty_c(\R,\R^+)$.
\end{lem}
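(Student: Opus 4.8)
The plan is to reduce the statement to the inversion of a classical one‑dimensional Abel transform. Since $w_{\theta,\ve}$ is obtained from the normalized profile $w$ of Lemma~\ref{chan} by the dilation $w_{\theta,\ve}(\cdot)=\theta R_{\theta,\ve}^{-2}w(R_{\theta,\ve}^{-1}\cdot)$, and since the first–marginal map $\Pi^2_\sharp$ intertwines isotropic dilations of $\R^2$ with dilations of $\R$ (up to the obvious Jacobian), it is enough to construct one universal radial function $\rho(x)=g(|x|)$ on $\R^2$, with $g\in L^\infty_c([0,\infty),\R^+)$, whose first marginal is $w$; the kernel $\rho_{\theta,\ve}$ is then the corresponding rescaling of $\rho$. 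Writing $(-R_0,R_0)=\{w>0\}$ with $R_0<\infty$, the change of variables $r=\sqrt{x_1^2+x_2^2}$ turns the marginal condition $\int_\R g(\sqrt{x_1^2+x_2^2})\,\d x_1=w(x_2)$ into the Abel equation $w(y)=2\int_{|y|}^{\infty}\frac{r\,g(r)}{\sqrt{r^2-y^2}}\,\d r$, whose classical inversion dictates the choice
\[
g(r):=-\frac1\pi\int_r^{R_0}\frac{w'(t)}{\sqrt{t^2-r^2}}\,\d t .
\]
(Equivalently, $\rho$ is the layer–cake superposition $\int_0^{w(0)}\tfrac1\pi\big(\ell(s)^2-|x|^2\big)^{-1/2}\ind_{B(0,\ell(s))}(x)\,\d s$ of the elementary radial kernels whose marginals are the characteristic functions of the super‑level sets of $w$, $\ell(s)$ being the super‑level radius.)

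I would then check that this $g$ has the asserted properties. Nonnegativity is immediate from the monotonicity of $w$: on $(0,R_0)$ one has $w'\le 0$, so the integrand above is $\ge 0$; and $g\equiv 0$ on $[R_0,\infty)$ because $w'$ is supported in $[-R_0,R_0]$, which yields the compact support. The delicate point is the $L^\infty$ bound, that is, ruling out a blow‑up of $g$ as $r\to 0^+$; the crude estimate $|g(r)|\le \tfrac1\pi\|w'\|_\infty\int_r^{R_0}(t^2-r^2)^{-1/2}\,\d t$ only gives a spurious $\log(1/r)$. Here one must use the extra vanishing of $w'$ at the origin: $w$ is even and $\mathcal C^\infty$ on $(-R_0,R_0)$, so $w'(0)=0$ and $|w'(t)|\le C|t|$ for $t$ near $0$; combined with $\|w'\|_{L^\infty}<\infty$ (Lipschitzness of $w$) and the positivity of $t$ away from $0$, this gives $C_1:=\sup_{0<t<R_0}|w'(t)|/t<\infty$. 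Consequently
\[
\pi\,|g(r)|\le C_1\int_r^{R_0}\frac{t}{\sqrt{t^2-r^2}}\,\d t=C_1\,\sqrt{R_0^2-r^2}\le C_1R_0,
\]
so $\rho\in L^\infty_c(\R^2,\R^+)$ with a bound depending only on $w$, hence only on $\alpha$ and $d$.

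It remains to check that $g$ really solves the marginal identity, i.e. that Abel inversion is a left inverse of the Abel transform on this class. Concretely, one inserts the formula for $g$ into $2\int_{|y|}^{\infty}\frac{r\,g(r)}{\sqrt{r^2-y^2}}\,\d r$, swaps the two integrations (legitimate by $|w'(t)|\le C_1t$ and Tonelli), and evaluates the inner integral $\int_{|y|}^{t}\frac{r\,\d r}{\sqrt{(r^2-y^2)(t^2-r^2)}}=\tfrac\pi2$ via $\sigma=r^2$ and the standard Beta‑function identity $\int_a^b\frac{\d\sigma}{\sqrt{(\sigma-a)(b-\sigma)}}=\pi$; what survives is $-\int_{|y|}^{\infty}w'(t)\,\d t=w(|y|)=w(y)$. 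Running the same construction with $w$ replaced by $w_{\theta,\ve}$ — equivalently, setting $\rho_{\theta,\ve}(x)=R_{\theta,\ve}^{-2}\rho(R_{\theta,\ve}^{-1}x)$ with the amplitude adjusted so that its marginal is exactly $w_{\theta,\ve}$ — produces a radial kernel of the announced form, with the right support and total mass by the scaling relations.

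The only real obstacle is the uniform estimate on $g$ near the origin: this is the single step where soft Abel‑inversion theory is not enough and one has to exploit the specific qualitative properties of the optimal profile (evenness and interior smoothness, so that $w'$ vanishes linearly at $0$), the rest being positivity/support bookkeeping and a standard special‑function computation.
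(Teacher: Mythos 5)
Your proposal is correct and follows essentially the same route as the paper: the same Abel-inversion formula $\rho(x)=-\frac1\pi\int_{|x|}^{\infty}w'(s)(s^2-|x|^2)^{-1/2}\,\d s$ after reducing to the normalized profile $w$ by scaling, and the same key estimate $|w'(t)|\le Ct$ (from $w'(0)=0$, Lipschitzness and interior smoothness of $w$) to rule out a logarithmic blow-up of $\rho$ at the origin. Your explicit Fubini/Beta-function verification of the marginal identity is exactly the computation the paper leaves to the reader, and your bound $\int_r^{R_0}t(t^2-r^2)^{-1/2}\,\d t=\sqrt{R_0^2-r^2}$ is a slightly cleaner version of the paper's two-piece estimate.
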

\begin{proof}
After the renormalization $\rho_{\theta,\ve}(x)=R_{\theta,\ve}^{-2}\rho(R_{\theta,\ve}^{-1}x)$, it remains to find $\rho$ satisfying $\Pi^1_\sharp \rho(x)dx=w(x_2)\d x_2$. It is not very difficult to see that a radial solution is given by the formula
\begin{equation}\label{invproj}
\rho(x)=\displaystyle\int_{|x|}^{\infty}\frac{-w'(s)}{\pi\sqrt{s^2-|x|^2}}\; \d s.
\end{equation}
Details are left to the reader. Let justify how \eqref{invproj} implies that $\rho$ is bounded. Since $w$ is compactly supported, there exists $R>0$ such that $w(x)=0$ for $|x|>R$. In particular, $\rho$ is compactly supported on $\overline{B}(0,R)$. Then, since $w'(0)=0$ and since $w'$ is bounded on $\R$ and smooth around $0$, one has $|w'(x)|\leq C |x|$ for all $x\in\R$ and some $C>0$. Hence, there exists a constant $C>0$ such that for all $x\in\R^2$ such that $|x|=:r\in [0,R)$,
$$\rho (x)=\int_{1}^R \frac{-w'(rs)\d s}{\pi\sqrt{s^2-1}}\leq C\int_{1}^{R/r} \frac{rs\d s}{\sqrt{s^2-1}}\leq C \left\{R\int_1^2\frac{s\d s}{\sqrt{s^2-1}}+r\int_2^{R/r}\frac{s\d s}{\sqrt{s^2-1}}\right\}$$
which is bounded since $s\to \frac{s}{\sqrt{s^2-1}}$ is integrable on $[1,2]$ and bounded on $[2,+\infty)$.
\end{proof}
As a consequence, in the case where $u=\theta\hone_{|S}$, a recovery sequence, i.e. a sequence $(u_\ve)$ such that $u_\ve\to u$ in $\mathcal{M}_{div}(\Omega)$ and $M^\alpha_\ve (u_\ve)\to M^\alpha (u)$ as $\ve\to 0$, is obtained as
$$u_\ve =\rho_{\theta,\ve}\ast u.$$
In the case of a finite energy configuration, i.e. $u\in \mathcal{M}_{div}(\Omega)$ such that $M^\alpha (u)<\infty$, thanks to classical properties in the theory of $\Gamma$-convergence, it is enough to find a recovery sequence for $u$ belonging to a class of measures which are dense in energy. Thanks to the work of Q. Xia in \cite{Xia:2003} (see also \cite{Oudet:2011}), we know that the class of vector measures concentrated on finite graphs is dense in energy so that one can restrict to this case. In \cite{Xia:2003}, the branched transportation energy was in fact defined by relaxation of its restriction to the set of vector measures concentrated on a graph. The rectifiability of finite energy configurations and the Eulerian representation \eqref{defmalpha} was discovered later in \cite{Xia:2004} (see also \cite{Bernot:2009}). This density property was used in \cite{Oudet:2011} to prove the $\Gamma$-convergence of $M^\alpha_\ve$ toward $M^\alpha$. In the setting of functionals with divergence constraint, we need the following lemma:
\begin{lem}\label{lembd}
Let $u\in \mathcal{M}_{div}(\Omega)$ be such that $M^\alpha (u)<\infty$. For all $\lambda>\gamma$, there exists a sequence $(u_\ve)\subset H^1_0(\Omega)$ converging to $u$ in $\mathcal{M}_{div}(\Omega)$ such that 
$$M^\alpha_\ve (u_\ve)\underset{\ve\to 0}{\longrightarrow} c_0 M^\alpha (u)\quad\text{and}\quad\ve^{\lambda} \|\nabla\cdot u_\ve\|_{L^2}\quad\text{is bounded.}$$
\end{lem}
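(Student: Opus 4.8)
\emph{Strategy and reduction.} The plan is to reduce to the case where $u$ is concentrated on a finite graph, build the recovery sequence by convolving each edge with the optimal profile rescaled at the appropriate width, and then read the $L^2$ bound on the divergence off the scaling of the convolution kernels. First I would use that vector measures concentrated on finite graphs are dense in energy for $M^\alpha$ (Xia, as recalled above) together with the unconstrained $\Gamma$-convergence $M^\alpha_\ve\to c_0 M^\alpha$ of Theorem \ref{gammaconvergenceoudet}, so that by a diagonal argument it suffices to treat $u=\sum_i\theta_i\,n_{S_i}\,\hone_{|S_i}$ with finitely many segments $S_i$ (pairwise disjoint except at common endpoints), multiplicities $\theta_i>0$, and Kirchhoff's law at every interior vertex; one may moreover assume $u$ and all its approximants are supported in a fixed compact $K\Subset\Omega$. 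This diagonalization will be legitimate provided that for each such polyhedral $u$ the recovery sequence is chosen so that $\ve^{\lambda}\|\nabla\cdot u_\ve\|_{L^2}\to 0$, since then the graph-dependent constant gets absorbed by letting the graph vary slowly with $\ve$.

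\emph{Construction and energy estimate.} For polyhedral $u$ I would set $u_\ve=\sum_i\theta_i\,n_{S_i}\,(\rho_{\theta_i,\ve}\ast\hone_{|S_i})$, where $\rho_{\theta_i,\ve}$ is the radial kernel of width $R_{\theta_i,\ve}=\ve^{\gamma}\theta_i^{(1-\gamma)/(d-1)}$ attached to the optimal profile $w$ of Lemma \ref{chan}, recentered on the corresponding edge; to genuinely land in $H^1_0(\Omega)$ one first replaces $w$ by a smooth profile with $F^\beta$-energy within $\delta$ of the optimum and adds one more diagonalization in $\delta$. For $\ve$ small $u_\ve$ is supported in $K$, hence $u_\ve\in H^1_0(\Omega)$. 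Away from the vertices the slicing computation of \cite{Oudet:2011}, together with the renormalization $F^\beta_\ve(v)=\theta^\alpha F^\beta(w)$ recalled before Lemma \ref{chan}, produces a contribution $\simeq c_0\theta_i^\alpha\hone(S_i)$ on each edge; near each vertex the overlap region has diameter $O(\max_i R_{\theta_i,\ve})$, on which $|u_\ve|=O(\theta R^{1-d})$ and $|\nabla u_\ve|=O(\theta R^{-d})$, so that its energy is $O(\theta^\alpha R)=o(1)$ by the same computation as in \eqref{malphazq}. This gives $\limsup_\ve M^\alpha_\ve(u_\ve)\le c_0\sum_i\theta_i^\alpha\hone(S_i)=c_0 M^\alpha(u)$, and the matching lower bound follows from the $\Gamma$-liminf inequality of Theorem \ref{gammaconvergenceoudet} once $u_\ve\to u$ in $\mathcal{M}_{div}(\Omega)$ is established.

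\emph{Convergence and the bound on the divergence.} Since $R_{\theta_i,\ve}\to 0$ one has $\rho_{\theta_i,\ve}\ast\hone_{|S_i}\rightharpoonup\hone_{|S_i}$, so $u_\ve\rightharpoonup u$; moreover $\nabla\cdot u_\ve=\sum_i\theta_i\,\rho_{\theta_i,\ve}\ast\nabla\cdot(n_{S_i}\hone_{|S_i})$ is a finite sum of mollified Dirac masses $\pm\theta_i\rho_{\theta_i,\ve}(\cdot-v)$ at the vertices $v$, which at interior vertices add up to a zero-mean blob by Kirchhoff and hence converge weakly to $\nabla\cdot u$; thus $u_\ve\to u$ in $\mathcal{M}_{div}(\Omega)$. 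Finally, from $\|\rho_{\theta,\ve}\|_{L^2}^2=R_{\theta,\ve}^{-d}\|\rho\|_{L^2}^2$,
\[
\|\nabla\cdot u_\ve\|_{L^2}\le\sum_i\theta_i\|\rho_{\theta_i,\ve}\|_{L^2}\le C\sum_i\theta_i R_{\theta_i,\ve}^{-d/2}=C\,\ve^{-\gamma d/2}\sum_i\theta_i^{\,1-\frac{d(1-\gamma)}{2(d-1)}},
\]
which in dimension $d=2$ equals $C_u\,\ve^{-\gamma}$, whence $\ve^{\lambda}\|\nabla\cdot u_\ve\|_{L^2}\le C_u\,\ve^{\lambda-\gamma}\to 0$ for every $\lambda>\gamma$. (If one prefers $\nabla\cdot u_\ve$ to be $L^2$-small away from the source/sink vertices, one subtracts corrections $V_v\in H^1_0(B(v,R_v))$ with $\nabla\cdot V_v$ equal to the interior-vertex blobs, obtained from Theorem \ref{dirichlet} — or, those blobs being essentially radial, from Lemma \ref{diffusionradial} — whose energies are $O(\theta_v^\alpha R_v)=o(1)$, exactly as in the proof of Theorem \ref{localestimate}.)

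\emph{Main difficulty.} The hard part will be organizing a single diagonal sequence reconciling the three approximations in play: the density-in-energy approximation of $u$ by graphs, the smoothing of the optimal profile needed to place $u_\ve$ in $H^1_0(\Omega)$, and the quantitative decay $\ve^{\lambda}\|\nabla\cdot u_\ve\|_{L^2}\to 0$ whose constant deteriorates with the complexity of the approximating graph. Once this bookkeeping is arranged, the only remaining work is the local energy estimate near the vertices (controlling the overlap of the ``sausages'' and the deviation from the one-dimensional optimal profile), which is of the same nature as the computations already carried out in Section \ref{localestimatesection}, so that no analytic input beyond Lemma \ref{chan}, Theorem \ref{dirichlet}, and the $\Gamma$-liminf inequality of \cite{Oudet:2011} should be needed.
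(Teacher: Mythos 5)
Your construction is essentially the one the paper uses: approximate $u$ in energy by graph measures $u_{G_n}$ (Xia's density result), convolve each edge with the kernel $\rho_{\theta_i,\ve}$ built from the optimal profile of Lemma \ref{chan}, estimate the energy separately on and off the node set, observe that $\|\nabla\cdot u_\ve\|_{L^2}\le C_G\,\ve^{-\gamma}$ for a fixed graph, and absorb the graph-dependent constant through a diagonal argument using $\lambda>\gamma$. The energy estimates you sketch (edge contribution $c_0\theta_i^\alpha\hone(S_i)$, node contribution $O(\ve^\gamma)$, lower bound via the $\Gamma$-liminf of \cite{Oudet:2011}) all match the paper's Lemma \ref{limsup}.

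The gap sits precisely in the step you defer as ``bookkeeping'': the diagonalization in $\mathcal{M}_{div}(\Omega)$, and it is exactly there that the correction you relegate to a parenthesis is not optional. Without subtracting, on each node ball, a field which kills (or trims) the mollified Dirac blobs, the divergence of your $u_{\ve,n}$ is a superposition at each vertex of kernels $\pm\theta_i\rho_{\theta_i,\ve}(\cdot-v)$ of \emph{different} widths, which do not cancel even under Kirchhoff's law; its total variation is of order $\sum_{e\in E(G_n)}\theta_e$, a quantity not controlled by $\|u_{G_n}\|_{\mathcal{M}_{div}(\Omega)}$ and which in general blows up as the graphs refine (for dyadic-type approximations it grows like the number of generations). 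A weak-star convergent sequence of measures must be bounded in total variation, so a diagonal sequence with $|\nabla\cdot u_\ve|(\overline{\Omega})$ unbounded cannot converge in $\mathcal{M}_{div}(\Omega)$; moreover the weak-star topology is metrizable only on norm-bounded sets, so even the prescription ``choose $\ve_n$ so that $u_{\ve,n}$ is close to $u_{G_n}$'' needs this uniform bound to make sense. The paper resolves this by making the node correction part of the construction (Theorem \ref{dirichlet} on each node ball, trimming also the excess at the vertices where $(\nabla\cdot u)(B_j)\neq 0$), which yields $|\nabla\cdot u_{\ve,n}|(\overline{\Omega})\le|\nabla\cdot u_{G_n}|(\overline{\Omega})$ and $|u_{\ve,n}|(\overline{\Omega})\le|u_{G_n}|(\overline{\Omega})+C\ve^\gamma$, so that the whole family stays in a fixed bounded set $\mathcal{M}_{M+1}(\Omega)$ where a metric is available and the inductive choice of $\ve_n$ (including $\ve_n^{\lambda-\gamma}C_n\le 1$, i.e.\ your absorption of the constant) goes through. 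Once you promote that correction from optional to structural --- its cost is covered by the estimates you already have, since the correcting fields obey the same bounds as the blobs --- your argument coincides with the paper's proof; the extra smoothing of the profile $w$ you propose is harmless but unnecessary, since $w$ is Lipschitz.
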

Before proving this statement, we are going to investigate the case where $u$ is concentrated on a finite graph. First of all, let us give some details on what ``a vector measure concentrated on a finite graph $G$'' is. Let $G=(V(G),E(G),\theta)$ be a weighted directed graph: $V(G)\subset \Omega$ is a finite set of vertices, $E(G)$ is the finite set of oriented edges $\mathbf{e}=(e,\tau_\mathbf{e})$, where $e=[a_e,b_e]\subset \Omega$ and $\tau_\mathbf{e}$ is a unit vector representing the direction of $\mathbf{e}$, and $\theta: E(G)\to (0,+\infty)$ is the weight function. Then the ``vector measure associated to $G$'' is given by
$$u_G=\sum_{\mathbf{e}=(e,\tau_{\mathbf{e}})\in E(g)} \theta(\mathbf{e})\tau_{\mathbf{e}} \d \hone_{|e}\quad .$$
These measures $u_G$ belong to $\mathcal{M}_{div}(\Omega)$, i.e. $\nabla\cdot u_G$ is a measure, and they are called ``transport paths'' (see Definition 2.1 in \cite{Xia:2003}). When $u$ is a transport path, we have the following lemma:
\begin{lem}\label{limsup}
Let $u=u_G\in\mathcal{M}_{div}(\Omega)$ for some weighted directed graph $G$. Then, there exists a sequence $(u_\ve)_{\ve>0}$ converging to $u$ in $\mathcal{M}_{div}(\Omega)$ and a constant $C$ depending on $u$ such that, for $\ve$ small enough, $u_\ve\in H^1_0(\Omega)$ and
\begin{enumerate}
\item $\int_\Omega |u_\ve|\leq |u|(\Omega)+ C\, \ve^\gamma$,
\item $\int_\Omega|\nabla\cdot  u_\ve |\leq |\nabla\cdot u|(\Omega)$,
\item $\ve^\gamma\, \|\nabla\cdot  u_\ve \|_{L^2} \leq C$,
\item $|M^\alpha_\ve(u_\ve) -c_0 M^\alpha (u)|\leq C\ve^\gamma$.
\end{enumerate}
\end{lem}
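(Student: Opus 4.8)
The plan is to glue together, edge by edge, the one--segment recovery sequence $\rho_{\theta,\ve}\ast u$ recalled above, and then to correct the divergence that this gluing creates at the vertices of $G$. After a harmless preliminary reduction --- subdividing edges and adding vertices at the points where two edges meet, which changes neither $u_G$ nor $M^{\alpha}(u_G)=\sum_{\mathbf e}\theta_{\mathbf e}^{\alpha}\hone(e)$ --- we may assume that two distinct edges of $G$ lie at positive distance apart, except possibly at a common endpoint. For an oriented edge $\mathbf e=(e,\tau_{\mathbf e})$, $e=[a_e,b_e]$, of weight $\theta_{\mathbf e}$, set $R_{\mathbf e}:=\ve^{\gamma}\theta_{\mathbf e}^{(1-\gamma)/(d-1)}$ and
$$u_\ve^{\mathbf e}:=\theta_{\mathbf e}\,\tau_{\mathbf e}\bigl(\rho_{\theta_{\mathbf e},\ve}\ast\hone_{|e}\bigr).$$
Since $w$ is Lipschitz, $u_\ve^{\mathbf e}\in H^1$; it is supported in the $R_{\mathbf e}$--neighbourhood of $e$; on the part of that neighbourhood at longitudinal distance $\geq R_{\mathbf e}$ from both $a_e$ and $b_e$ it is invariant along $\tau_{\mathbf e}$, with transverse profile $\psi_{\mathbf e}$ (the dilate of $w$ attached to $\mathbf e$) satisfying $F^{\beta}_\ve(\psi_{\mathbf e})=c_0\theta_{\mathbf e}^{\alpha}$, $c_0$ being the minimum in \eqref{hilliard}; and on the two remaining end caps $|u_\ve^{\mathbf e}|\lesssim\theta_{\mathbf e}R_{\mathbf e}^{1-d}$, $|\nabla u_\ve^{\mathbf e}|\lesssim\theta_{\mathbf e}R_{\mathbf e}^{-d}$ on a set of measure $\lesssim R_{\mathbf e}^{d}$, so the energy of each cap is $\lesssim\theta_{\mathbf e}^{\alpha}R_{\mathbf e}\lesssim\ve^{\gamma}$ by the computation following \eqref{estimationzq}.

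The field $\tilde u_\ve:=\sum_{\mathbf e}u_\ve^{\mathbf e}$ has $\nabla\cdot\tilde u_\ve=\sum_{v}g_v$ with $g_v:=\sum_{\mathbf e\ni v}\eta_{\mathbf e,v}\,\theta_{\mathbf e}\,\rho_{\theta_{\mathbf e},\ve}(\,\cdot-c_v)$, where $\eta_{\mathbf e,v}=+1$ if $v$ is the head of $\mathbf e$ and $-1$ if it is the tail; note $\int g_v=c_v$, the net flux of $u_G$ at $v$, and $g_v$ is radial about $c_v$. Put $\bar g_v:=c_v\,\rho_{|c_v|,\ve}(\,\cdot-c_v)$ (and $\bar g_v:=0$ if $c_v=0$) and $\tilde R_v:=\ve^{\gamma}\bigl(\sum_{\mathbf e\ni v}\theta_{\mathbf e}\bigr)^{(1-\gamma)/(d-1)}\lesssim\ve^{\gamma}$. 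Then $\bar g_v-g_v$ is radial about $c_v$, has zero mean, is supported in $B_v:=B(c_v,C\tilde R_v)$ and satisfies $\|\bar g_v-g_v\|_{L^{\infty}}\lesssim\tilde R_v^{-d}$, so Lemma \ref{diffusionradial} gives $V_v\in H^1_0(B_v)$ with $\nabla\cdot V_v=\bar g_v-g_v$, $\|\nabla V_v\|_{L^{\infty}}\lesssim\tilde R_v^{-d}$, hence $\|V_v\|_{L^{\infty}}\lesssim\tilde R_v^{1-d}$; the computation after \eqref{estimationzq} then yields $M^{\alpha}_\ve(V_v)\lesssim\theta_v^{\alpha}\tilde R_v\lesssim\ve^{\gamma}$ and $\|V_v\|_{L^1}\lesssim\ve^{\gamma}$. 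We set
$$u_\ve:=\tilde u_\ve+\sum_{v\in V(G)}V_v,\qquad\text{so that}\qquad\nabla\cdot u_\ve=\sum_{v}\bar g_v=\sum_{c_v\neq 0}c_v\,\rho_{|c_v|,\ve}(\,\cdot-c_v).$$
For $\ve$ small the balls $B_v$ are pairwise disjoint, contained in $\Omega$, and disjoint from every edge not incident to the corresponding vertex; choosing $C$ so that $C\tilde R_v\geq 2R_{\mathbf e}$ for $\mathbf e\ni v$, they also swallow all the end caps. Hence $u_\ve\in H^1_0(\Omega)$, and on $N^{c}$, with $N:=\bigcup_v B_v$, the fields $u_\ve^{\mathbf e}$ have pairwise disjoint supports, each equal to a translation--invariant strip.

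There remains the verification of items 1--4. Weak convergence $u_\ve\rightharpoonup u$ and $\nabla\cdot u_\ve\rightharpoonup\nabla\cdot u=\sum_v c_v\delta_{c_v}$ is immediate, the kernels concentrating and $\|V_v\|_{L^1}\to 0$. For (1), convolution with the unit--mass kernels preserves total mass, so $\sum_{\mathbf e}\int|u_\ve^{\mathbf e}|=\sum_{\mathbf e}\theta_{\mathbf e}\hone(e)=|u|(\Omega)$, while $\sum_v\|V_v\|_{L^1}\lesssim\ve^{\gamma}$. For (2), $\int|\nabla\cdot u_\ve|=\sum_v|c_v|\int\rho_{|c_v|,\ve}=\sum_v|c_v|=|\nabla\cdot u|(\Omega)$. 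For (3), $\|\nabla\cdot u_\ve\|_{L^2}^2=\sum_v c_v^{2}\,R_{|c_v|,\ve}^{-d}\|\rho\|_{L^2}^{2}\simeq\ve^{-2\gamma}\sum_v|c_v|^{2\gamma}$, whence $\ve^{\gamma}\|\nabla\cdot u_\ve\|_{L^2}\lesssim 1$, using $\gamma_2=(d+1)\gamma$. For (4), split $\Omega=N\cup N^{c}$ as in the proof of Theorem \ref{localestimate}: on each $B_v$ the field $u_\ve$ is the sum of the boundedly many pieces $u_\ve^{\mathbf e}$ ($\mathbf e\ni v$) and $V_v$, all $\lesssim\tilde R_v^{1-d}$ in sup norm and $\lesssim\tilde R_v^{-d}$ in gradient sup norm on a set of measure $\lesssim\tilde R_v^{d}$, so $M^{\alpha}_\ve(u_\ve,N)\lesssim\sum_v\theta_v^{\alpha}\tilde R_v\lesssim\ve^{\gamma}$; on $N^{c}$ the disjointness gives $M^{\alpha}_\ve(u_\ve,N^{c})=\sum_{\mathbf e}M^{\alpha}_\ve(u_\ve^{\mathbf e},N^{c})$, and slicing each term transversally while discarding the longitudinal part of $|\nabla u_\ve^{\mathbf e}|^{2}$ (Fubini) shows it equals $\hone(e)\,F^{\beta}_\ve(\psi_{\mathbf e})=c_0\theta_{\mathbf e}^{\alpha}\hone(e)$ up to an $O(\ve^{\gamma})$ error from the missing caps. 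Summing over the finitely many edges gives $|M^{\alpha}_\ve(u_\ve)-c_0M^{\alpha}(u)|\lesssim\ve^{\gamma}$.

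I expect the crux to be the divergence correction at the vertices: one must show that replacing the true vertex divergence $g_v$ --- whose total variation is the full throughput $\sum_{\mathbf e\ni v}\theta_{\mathbf e}$, in general much larger than $|c_v|$ --- by the single smoothed atom $\bar g_v$ of mass $c_v$ costs only $M^{\alpha}_\ve(V_v)=O(\ve^{\gamma})$. What makes this work is that $g_v$ and $\bar g_v$ are both radial about $c_v$, so the one--dimensional estimate of Lemma \ref{diffusionradial} applies and delivers $L^{\infty}$ (not merely $L^{2}$) control on $\nabla V_v$, which is what is needed to bound the first term of $M^{\alpha}_\ve$. The rest is bookkeeping: fixing the dilation constant $C$ in $B_v=B(c_v,C\tilde R_v)$ between the two competing demands (containing the caps, keeping the $B_v$ disjoint), and checking that the exponent identities $\gamma_2=(d+1)\gamma$ and $2-\alpha=(1-\gamma)(d+1)/(d-1)$ convert every error term into a power $\ve^{\gamma}$.
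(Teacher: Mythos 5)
Your construction is correct, and its skeleton coincides with the paper's: regularize each edge by convolution with the kernel $\rho_{\theta,\ve}$ whose transverse projection is the optimal profile $w$ of Lemma \ref{chan}, split $\Omega$ into a node set $N$ (balls of radius $O(\ve^\gamma)$ around the vertices) and its complement, get the exact energy $c_0\theta^\alpha_{\mathbf e}\hone(e)$ on the invariant strips and $O(\ve^\gamma)$ on $N$, and then correct the spurious divergence at the vertices. Where you genuinely diverge from the paper is in that last correction. The paper applies the Bourgain--Brezis result (Theorem \ref{dirichlet}) on each node ball to the $L^2$ function $g_j=\nabla\cdot v_\ve$, after splitting $g_j=\lambda g^+ +(1-\lambda)g^+-g^-$ so as to correct only a zero-mean part; the leftover divergence is the (non-explicit) $L^2$ function $\lambda g^+$ of mass $\theta_j$, and the energy of the corrector is controlled through $\|w_j\|_{H^1}\leq C\ve^{-\gamma}$ plus a H\"older/Poincar\'e computation and the exponent identities. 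You instead exploit the fact that the node discrepancy is radial about the vertex, replace it by the explicit smoothed atom $c_v\,\rho_{|c_v|,\ve}(\cdot-c_v)$, and invoke the radial Lemma \ref{diffusionradial} to get $L^\infty$ control of $\nabla V_v$; this is exactly the mechanism the paper uses in the \emph{first step} of the proof of Theorem \ref{localestimate}, transplanted here. Your route buys pointwise bounds (so the energy of the corrector is estimated by the same computation as for $Z_Q$, without the H\"older/Poincar\'e step) and an explicit, canonical limit divergence $\sum_v c_v\rho_{|c_v|,\ve}(\cdot-c_v)$; the paper's route is less tied to the radial structure and avoids having to compare kernels with different masses, at the price of an extra $L^2$ computation and a less explicit $\nabla\cdot u_\ve$. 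Both deliver items 1--4 with the same rates. Two small bookkeeping points you should make explicit: the constant in $B_v=B(c_v,C\tilde R_v)$ must be chosen not only so that the balls swallow the end caps, but also large enough (depending on the minimal angle between edges of $G$ at $v$) that the strips of distinct edges incident to $v$ are disjoint outside $N$, since the exact identity on $N^c$ and the absence of cross terms rely on it; and the comparability constants such as $\|\bar g_v-g_v\|_{L^\infty}\lesssim \tilde R_v^{-d}$ depend on the weights $\theta_{\mathbf e}$, i.e.\ on $u$, which is admissible here because the lemma allows $C=C(u)$.
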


\begin{proof}
By definition, such a vector measure $u$ can be written as a finite sum of measures $u_i=\theta_i \,\tau_i\, \hone_{|S_i}$ concentrated on a segment $S_i\subset\Omega$ directed by $\tau_i$ with multiplicity $\theta_i$ for $i=1,\dots, I$. 
We first define a regularized vector fied $v_\ve$ by $v_\ve := \sum_i v_i$, where $v_i=\rho_{\theta_i,\ve}\ast u_i$. Then, for $\ve$ small enough, $v_\ve$ is compactly supported on $\Omega$ and satisfies
$$\begin{cases}
 \int_\Omega |v_\ve|\leq |u|(\Omega),&\\
 |M_\ve^\alpha(v_\ve) -c_0M^\alpha (u)|\leq C\ve^\gamma .&
\end{cases}$$
The first statement is a consequence of the fact that $\int \rho_{\theta_i,\ve}=1$ and the inequality $\|f\ast \mu\|
_{L^1}\leq \|f\|_{L^1}\,|\mu|(\Omega)$ for $f\in \mathcal{C}_c (\Omega)$ and for a finite measure $\mu$ on $\Omega$. For the second statement, by definition of the kernel $\rho_{\theta,\ve}$ we know that, out of the nodes
set $N=\bigcup_i\op{supp}(\nabla\cdot  v_i)$, 
$$M^\alpha_\ve (v_\ve ,N^c)=c_0 M^\alpha (v,N^c).$$ 
As a result, we just have to estimate these energies on $N$ which is a finite union of balls: the supports of $\rho_{\theta_i,\ve}$ recentered at each end-point of the segment $S_i$. Since the radius of these balls is of the order of $\ve^\gamma$, this immediately gives the fact that $M^\alpha (u,N)\leq C \ve^\gamma$ for some constant $C>0$ depending on $u$. For the sake of simplicity, in the rest of this proof, $C>0$ will denote some constant depending on $u$ which is large enough so that all the inequalities below are true. We are going to prove that
$$M^\alpha (u,N)+M^\alpha_\ve (v_{\ve},N)\leq C \ve^\gamma.$$
It remains to estimate $M^\alpha_\ve (v_{\ve},N)$. Since $M^\alpha_\ve (v_{\ve},N)\leq I \sum_i M^\alpha_\ve (v_i,N)$, it is enough to estimate $M^\alpha_\ve (v_i,N)$. But $\|v_i\|_{L^\infty (N)}=\|\rho_{\theta_i,\ve}\ast u_i\|_{L^\infty (N)}\leq C\ve^{-2\gamma} \|\rho\|_{L^\infty} |u_i|(N_i)$, where $N_i:=N+\op{supp}\rho_{\theta_i,\ve}:= \{x+y\;:\; x\in N,\, y\in \op{supp}\rho_{\theta_i,\ve}\}$. Note that $\op{supp}\rho_{\theta_i,\ve}$ is a ball centered at the origin with radius smaller than $C\ve^\gamma$ so that $N_i$ is a finite union of balls with radii smaller than $C\ve^\gamma$ as well. In particular, using the fact that $u_i=\theta_i \,\tau_i\, \hone_{|S_i}$, we get $|u_i|(N_i)\leq C \ve^\gamma$ and so $\|v_i\|_{L^\infty (N)}\leq C\ve^{-\gamma}$. Similarly, one has $\|\nabla v_i\|_{L^\infty (N)}=\|\nabla\rho_{\theta_i,\ve}\ast u_i\|_{L^\infty (N)}\leq C\ve^{-3\gamma} \|\nabla\rho\|_{L^\infty} |u_i|(N_i)\leq C\ve^{-2\gamma}$. Now, the definition \eqref{malphaepsilon2d} gives
$$M^\alpha_\ve (v_\ve, N)=\ve^{\alpha+1}\int_N |\nabla v_\ve|^2+\ve^{\alpha-1}\int_N |v_\ve|^\beta\leq |N|\{\ve^{\alpha+1} \|\nabla v_\ve\|_{L^\infty}^2+\ve^{\alpha-1} \|v_\ve\|_{L^\infty}^\beta\}.$$
From the inequality $|N|\leq C \ve^{2\gamma}$ and the equalities $\alpha=3\gamma -1$, $\beta\gamma=\frac{(4\alpha-2)\gamma}{\alpha+1} = 4\gamma-2$, we deduce
$$M^\alpha_\ve (v_\ve, N)\leq C\ve^{2\gamma}\{\ve^{3\gamma-4\gamma}+\ve^{3\gamma-2-\beta\gamma}\}\leq 2C\ve^\gamma$$
as required. In order to construct an approximating vector field with controlled divergence, we need to consider $u_\ve:=v_\ve-w_\ve$ where $w_\ve\in H^1_0(N)$ is constructed as follows:

The node set $N$, defined above, is a finite union $N=\bigcup_{j=1}^n B_j$, where each node $B_j$ is a ball centered at the end-point $a_i$ of some segment $S_i=[a_i,b_i]$. Let assume that $\ve$ is small enough so that these balls are non-overlapping. Then, on each node $B_j$, $g_j:=\nabla \cdot v_\ve$ is a finite superposition of kernels like $\rho_{\theta,\ve}$ recentered at $c_j$, the center of $B_j$. In particular $\|g_j\|_{L^2(B_j)}\leq C\ve^{-\gamma}$ and $\int_{B_j} g_j=\int_{B_j} \nabla\cdot v_\ve =(\nabla\cdot u)(B_j)=:\theta_j$. If $\theta_j=0$, then Theorem \ref{dirichlet} allows to find $w_j\in H^1_0(B_j)$ satisfying $\nabla \cdot w_{j} =g_j$ and $\| w_{j}\|_{H^1(B_j)}\leq C\, \ve^{-\gamma}$. If $\theta_j\neq 0$, let say $\theta_j>0$, we rewrite $g_j$ as $g_j=g^+-g^-=\lambda g^+ +(1-\lambda) g^+ -g^-$ where $g^+$ (resp. $g^-$) stands for the positive part (resp. negative part) of $g$ and $\lambda \in (0,1]$ is chosen such that $(1-\lambda)\int_B g^+ =\int_B g^-$, i.e. $\theta_j=\lambda \int_{B_j}g_+$. Applying Theorem \ref{dirichlet}, we get $w_j\in H^1_0(B_j)$ satisfying $\nabla \cdot w_{j} =(1-\lambda) g^+ -g^-$ and $\| w_{j}\|_{H^1(N)}\leq C\, \ve^{-\gamma}$. Let us define $w_\ve =\sum_j w_j$ and $u_\ve:= v_\ve - w_\ve$. Since $\int_{B_j} |\nabla \cdot u_\ve |=\int_{B_j} |g_j -\nabla \cdot w_j |= \lambda\int_{B_j}  g^+ =\theta_j$ for all $j$, we have 
$$\int_\Omega |\nabla\cdot u_\ve |=\int_N |\nabla\cdot u_\ve|\leq \sum_j \theta_j = |\nabla\cdot u |(\Omega).$$ 
Moreover, to estimate $\|\nabla\cdot u_\ve\|_{L^2}$, note that $\|\nabla\cdot  w_\ve\|_{L^2}\leq \| w_\ve\|_{H^1}\leq C\ve^{-\gamma}$ and, because $\nabla\cdot  v_\ve$ is only composed of a finite sum of translated kernels of the form $\rho_{\theta_i,\ve}$, $\|\nabla\cdot v_\ve\|_{L^2}\leq C\ve^{-\gamma}$ as well. In particular $\ve^{\gamma}\|\nabla\cdot u_\ve\|_{L^2}$ is bounded. Then, the Poincar\'e inequality yields
$$\|w_\ve\|_{L^2}= \sum_j \|w_j\|_{L^2(B_j)} \leq C\sum_j \ve^{\gamma}\|\nabla w_j \|_{L^2(B_j)}\leq C'$$
since $B_j$ is a ball of radius $C\varepsilon^\gamma$. Consequently, by the Cauchy-Schwarz inequality, we get
$$\int_\Omega |u_\ve|\leq \int_\Omega |v_\ve|+\int_N |w_\ve|\leq |u|(\Omega)+|N|^{1/2}\|w_\ve\|_{L^2}\leq |u|(\Omega)+C\ve^\gamma .$$
Similarly, by a H\"older inequality, we have
$$\int_N |w_\ve|^\beta \leq |N|^{\frac{2-\beta}{2}}\|w_\varepsilon\|_{L^2}^\beta\leq (n\varepsilon^{2\gamma})|^{\frac{2-\beta}{2}}\|w_\varepsilon\|_{L^2}^\beta\leq C\ve^{\gamma (2-\beta)}.$$
Once again, since $\alpha=3\gamma -1$ and $\beta\gamma= 4\gamma-2$, we deduce
$$M^\alpha_\ve(w_\ve)=\ve^{\alpha+1}\int_N |\nabla w_\ve|^2+\ve^{\alpha-1}\int_N |w_\ve|^\beta\leq C\{\ve^{\alpha+1-2\gamma}+\ve^{\alpha-1+\gamma (2-\beta)}\}=2C \ve^\gamma .$$ 
Since $M^\alpha (u,N)\leq C\ve^\gamma$, we get $M^\alpha_\ve(u_\ve,N)\leq 2[ M^\alpha_\ve(v_\ve,N)+M^\alpha_\ve(w_\ve,N)]\leq C\ve^\gamma$ which finally gives
$$|M^\alpha_\ve(u_\ve) -c_0 M^\alpha (u)|=|M^\alpha_\ve(u_\ve,N) -c_0 M^\alpha (u,N)|\leq C\ve^\gamma .\qedhere$$
\end{proof}

\begin{proof}[Proof of Lemma \ref{lembd}]
First fix $u\in \mathcal{M}_{div}(\Omega)$ and construct a sequence $(u_n)_{n\geq 1}$ converging to $u$ such that $u_n=u_{G_n}$ is a vector measure associated to some weighted directed graph $G_n\subset\Omega$ and $M^\alpha (u_n)$ converges to $M^\alpha (u)$. Since $(u_n)$ weakly converges in $\mathcal{M}_{div}(\Omega)$, the total variations of both measures $u_n$ and $\nabla\cdot  u_n$ are bounded by some constant $M>0$. In the following, we use a metric $d$ on the space $\mathcal{M}_{M+1}(\Omega)$. Extracting a subsequence if necessary, one can suppose that the two following estimates hold
$$d(u_n,u)\leq 2^{-n-1}\quad\text{and}\quad |M^\alpha (u_n)-M^\alpha (u)|\leq 2^{-n-1}.$$
For each $n\geq 1$, let $u_{\ve,n}$ be a sequence converging to $u_n$ as $\ve\to 0$ and satisfying all properties in Lemma \ref{limsup} for some constant $C=C_n$. Then, one can construct by induction a decreasing sequence $(\ve_n)_{n\geq 1}\to 0$ such that for all $n\geq 1$ and $\ve\leq\ve_n$, $u_{\ve,n}\in H^1_0(\Omega)$ and
\begin{enumerate}
\item $u_{\ve,n}\in \mathcal{M}_{M+1}(\Omega),$
\item $d(u_{\ve,n},u_n)\leq 2^{-n-1},$
\item$ |M^\alpha_\ve (u_{\ve,n})-c_0 M^\alpha (u_n)|\leq 2^{-n-1} ,$
\item$ \ve^{\lambda-\gamma}C_n\leq 1\text{ so that }\ve^{\lambda}\,\|\nabla\cdot  u_{\ve,n}\|_{L^2}\leq 1.$
\end{enumerate}
Indeed, assume that $\varepsilon_n>0$ satisfies all the asked properties. Then, one can find $\varepsilon_{n+1}\in (0,\varepsilon_n)$ small enough so that 

\begin{itemize}
\item[$\ast$] $C_{n+1}\,\varepsilon_{n+1}^\gamma<2^{-n-2}$ thus implying the first and third properties (see properties 1., 2. and 4. in Lemma \ref{limsup}),
\item[$\ast$] $C_{n+1}\,\ve_{n+1}^{\lambda-\gamma}<1$ which is possible since $\lambda>\gamma$
\item[$\ast$] and $d(u_{\ve,n+1},u_{n+1})\leq 2^{-n-2}$ for all $\varepsilon\in (0,\varepsilon_{n+1})$ which is possible since $u_{\varepsilon,n+1}$ converges to $u_{n+1}$ in $(\mathcal{M}_{M+1}(\Omega),d)$ as $\varepsilon\to 0$.
\end{itemize}
Now it is quite straightforward that the sequence $(u_\ve)_{\ve>0}$ defined by
\begin{equation*}
u_\ve =
\begin{cases}
u_{\ve,1} & if $\ve>\ve_1$,\\
u_{\ve,n} & if $\ve_{n+1}<\ve\leq\ve_{n}$ for some $n\geq 1$,
\end{cases}
\end{equation*}
satisfies all the properties of Lemma \ref{lembd}.
\end{proof}

\begin{proof}[Proof of Theorem \ref{gammacv}]
It is already shown in \cite{Oudet:2011} that $M^\alpha_\ve\overset{\Gamma}{\longrightarrow} c_0 M^\alpha$. We just have to prove that the $\Gamma -\limsup$ property still holds when we add the divergence constraint. In other words, it remains to prove that for all $u\in \mathcal{M}_{div}(\Omega)$ such that $\nabla\cdot u=\mu$, there exists a sequence $(v_\ve)_{\ve>0}\subset \mathcal{M}_{div} (\Omega)$ weakly converging to $u$ as measures, satisfying $\nabla\cdot v_\ve =f_\ve$ (so that $(v_\ve)$ also converges in $\mathcal{M}_{div}(\Omega)$) and $M^\alpha_\ve (v_\ve)\underset{\ve\to 0}{\longrightarrow} c_0 M^\alpha (u)$.\\

First of all, take a sequence $(u_\ve)_{\ve>0}\subset H^1_0(\Omega)$ converging to $u$ given by Lemma $\ref{lembd}$ for $\lambda=\frac{5\gamma}{4}$: $M^\alpha_\ve (u_\ve)\to M^\alpha (u)$ with $\ve^{\lambda} \|\nabla\cdot u_\ve\|_{L^2}$ bounded. Then define $g_\ve:=f_\ve-\nabla\cdot  u_\ve$ the residual divergence. Note that $\int_\Omega g_\varepsilon=0$. Indeed $\int_\Omega f_\varepsilon=0$ by assumption (see \eqref{hypponfeps}) and $\int_\Omega\nabla\cdot u_\varepsilon=0$ since $u_\varepsilon\in H^1_0(\Omega)$. Moreover, our assumptions on the sequences $(u_\varepsilon)_{\varepsilon>0}$ and $(f_\varepsilon)_{\varepsilon>0}$ yield
$$\ve^{\gamma_2}\|g_\ve\|_{L^2}^2\underset{\ve\to 0}{\longrightarrow}0\quad .$$ 
Indeed, we know that the same estimate is satisfied by $(f_\varepsilon)_{\varepsilon>0}$ thanks to \eqref{hypponfeps}. Moreover, since $3\gamma=\gamma_2$ and $\lambda=\frac{5\gamma}{2}$, one has $\varepsilon^{\gamma_2}\|\nabla\cdot u_\varepsilon\|_{L^2}=\varepsilon^{\frac{\gamma}{2}}\varepsilon^{\frac{5\gamma}{2}}\|\nabla\cdot u_\varepsilon\|_{L^2}\leq C\varepsilon^{\frac{\gamma}{2}}\to 0$. Moreover, since $f_\ve$ and $\nabla\cdot u_\ve$ weakly converge to $\mu$ as $\ve$ goes to $0$, we know that $g_\ve$ weakly converges to $0$. Let $g^+_\varepsilon$ (resp. $g^-_\varepsilon$) denote the positive (resp. negative) part of $g_\varepsilon$. In order to satisfy the divergence constraint, we may correct $u_\ve$ with a vector field $w_\ve$, given by Theorem \ref{comparisonwithwasserstein} (together with Remark \ref{dependancetheta}), such that $\nabla\cdot  w_\ve =g_\ve$,
\begin{equation}\label{fin}
M^\alpha_\ve (w_\ve)\leq H\left(W_{1}(g^+_\ve,g^-_\ve)^{1-d(1-\alpha)} + \ve^{\gamma_2}\|g_\ve\|_{L^2}^2\right)\underset{\ve\to 0}{\longrightarrow} 0
\end{equation}
and $\|w_\ve\|_{\mathcal{M}_{div}(\Omega)}$ is bounded, where $H(x)=C (x+x^\delta)$ for some $C>0$ and $\delta\in (0,1)$. We deduce that $(w_\ve)$ is relatively compact in $\mathcal{M}_{div}(\Omega)$. From \eqref{fin} and the $\Gamma -\liminf$ property, this implies that $w_\ve$ converges to $0$ in $\mathcal{M}_{div}(\Omega)$. Now, by construction, $v_\ve=u_\ve+w_\ve$ satisfies $\nabla\cdot  v_\ve = f_\ve$, $v_\ve \to u$ in $\mathcal{M}_{div}(\Omega)$ and $M^\alpha_\ve (v_\ve)\underset{\ve\to 0}{\longrightarrow} c_0 M^\alpha (u)$. Indeed, this last limit is a consequence of
\end{proof}

\begin{lem}Let $\Omega$ be some bounded open set in $\R^d$, $d\geq 1$. Let $(u_\ve)$, $(v_\ve)\subset H^1 (\Omega)$ be two sequences such that $M^\alpha_\ve (u_\ve-v_\ve)\underset{\ve\to 0}{\longrightarrow} 0$ and assume that $M^\alpha_\ve (v_\ve)$ is bounded. Then, 
$$|M^\alpha_\ve (u_\ve ) -M^\alpha_\ve (v_\ve)| \underset{\ve\to 0}{\longrightarrow} 0.$$
\end{lem}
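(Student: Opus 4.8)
The plan is to write $w_\ve:=u_\ve-v_\ve$, so that $u_\ve=v_\ve+w_\ve$ with $M^\alpha_\ve(w_\ve)\to 0$, and to control separately the two contributions to $M^\alpha_\ve$: the potential term $\ve^{-\gamma_1}\int_\Omega|\cdot|^\beta$ and the Dirichlet term $\ve^{\gamma_2}\int_\Omega|\nabla\cdot|^2$. For the potential term I would use that $t\mapsto t^\beta$ is subadditive on $\R^+$ (because $0<\beta<1$): the pointwise a.e. inequalities $|u_\ve|^\beta\le|v_\ve|^\beta+|w_\ve|^\beta$ and $|v_\ve|^\beta\le|u_\ve|^\beta+|w_\ve|^\beta$ integrate to
$$\left|\ \int_\Omega|u_\ve|^\beta-\int_\Omega|v_\ve|^\beta\ \right|\le\int_\Omega|w_\ve|^\beta,$$
hence $\ve^{-\gamma_1}\big|\int_\Omega|u_\ve|^\beta-\int_\Omega|v_\ve|^\beta\big|\le\ve^{-\gamma_1}\int_\Omega|w_\ve|^\beta\le M^\alpha_\ve(w_\ve)\to 0$.

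For the Dirichlet term I would expand the square, $\int_\Omega|\nabla u_\ve|^2=\int_\Omega|\nabla v_\ve|^2+2\int_\Omega\nabla v_\ve:\nabla w_\ve+\int_\Omega|\nabla w_\ve|^2$, and bound the cross term by the Cauchy--Schwarz inequality, splitting the weight $\ve^{\gamma_2}$ symmetrically between the two factors:
$$\ve^{\gamma_2}\left|\ \int_\Omega|\nabla u_\ve|^2-\int_\Omega|\nabla v_\ve|^2\ \right|\le 2\big(\ve^{\gamma_2}\|\nabla v_\ve\|_{L^2}^2\big)^{1/2}\big(\ve^{\gamma_2}\|\nabla w_\ve\|_{L^2}^2\big)^{1/2}+\ve^{\gamma_2}\|\nabla w_\ve\|_{L^2}^2.$$
Here $\ve^{\gamma_2}\|\nabla v_\ve\|_{L^2}^2\le M^\alpha_\ve(v_\ve)$ is bounded by hypothesis, while $\ve^{\gamma_2}\|\nabla w_\ve\|_{L^2}^2\le M^\alpha_\ve(w_\ve)\to 0$, so the whole right-hand side tends to $0$. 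Summing the two estimates yields $|M^\alpha_\ve(u_\ve)-M^\alpha_\ve(v_\ve)|\to 0$, as claimed.

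There is essentially no serious obstacle: the argument is the standard ``near-continuity of the energy under perturbations that are small in the energy'' principle, and it requires no compactness or PDE input. The only place that needs a moment's thought is the cross term $\int_\Omega\nabla v_\ve:\nabla w_\ve$ in the quadratic expansion of the Dirichlet part — unlike $t\mapsto t^\beta$, the map $\xi\mapsto|\xi|^2$ is not subadditive, so one genuinely needs an $L^2$--$L^2$ pairing of $\nabla v_\ve$ against $\nabla w_\ve$, and this is exactly what the assumed boundedness of $M^\alpha_\ve(v_\ve)$ supplies once the weight $\ve^{\gamma_2}$ is distributed evenly between the two factors.
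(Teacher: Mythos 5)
Your proof is correct and takes essentially the same approach as the paper: the potential term is handled by subadditivity of $t\mapsto t^\beta$, and the Dirichlet cross term is controlled by pairing $\nabla v_\ve$ against $\nabla(u_\ve-v_\ve)$ using the assumed bound on $M^\alpha_\ve(v_\ve)$. The only difference is cosmetic: you apply Cauchy--Schwarz directly, which gives a symmetric two-sided estimate at once, whereas the paper uses the Young inequality $|A+B|^2\le(1+\nu)|A|^2+(1+1/\nu)|B|^2$ with the optimized choice $\nu=\sqrt{M^\alpha_\ve(u_\ve-v_\ve)}$ and then exchanges the roles of $u_\ve$ and $v_\ve$ after noting that $M^\alpha_\ve(u_\ve)$ is also bounded.
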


\begin{proof}
Let $\nu>0$ be some constant. For all real matrices $A$ and $B$ of size $d\times d$, by the Young inequality, we have
$$|A+B|^2=|A|^2+|B|^2+2 A:B \leq (1+\nu)|A|^2+(1+1/\nu)|B|^2.$$
Writing $u_\ve=v_\ve+u_\ve-v_\ve$, we use the preceding inequality for $A=\nabla v_\ve$, $B=\nabla (u_\ve-v_\ve)$ and the subadditivity of $x\to|x|^\beta$ to get
$$M^\alpha_\ve (u_\ve)=\ve^{-\gamma_1}\int_\Omega |u_\ve |^\beta +\ve^{\gamma_2}\int_\Omega |\nabla u_\ve|^2
\leq (1+\nu)M^\alpha_\ve (v_\ve)+(1+1/\nu)M^\alpha_\ve (u_\ve-v_\ve).$$
Since $M^\alpha_\ve (v_\ve)<C$ for some constant $C<+\infty$, we deduce that
$$M^\alpha_\ve (u_\ve)-M^\alpha_\ve (v_\ve)\leq C\nu+(1+1/\nu) M^\alpha_\ve (u_\ve -v_\ve).$$
For any value of $\varepsilon$ such that $u_\varepsilon\neq v_\varepsilon$, let take $\nu = \sqrt{M^\alpha_\ve (u_\ve - v_\ve)}>0$. Hence, taking the $\limsup$ when $\ve\to 0$, one gets 
$$\limsup\limits_{\ve\to 0} \{M^\alpha_\ve (u_\ve)-M^\alpha_\ve (v_\ve)\}\leq C'\limsup\limits_{\ve\to 0} \sqrt{M^\alpha_\ve (u_\ve - v_\ve)}= 0.$$
Since $M^\alpha_\ve (v_\ve)\leq 2 [M^\alpha_\ve (u_\ve) +M^\alpha_\ve (v_\ve-u_\ve)]$ and $M^\alpha_\ve (v_\ve)$ is bounded, we deduce that $M^\alpha_\ve (u_\ve)$ is bounded as well. Then we can apply all the preceding computations exchanging $u_\ve$ and $v_\ve$ to get $\limsup\limits_{\ve\to 0} \{M^\alpha_\ve (v_\ve)-M^\alpha_\ve (u_\ve)\}\leq 0$ which concludes the proof.
\end{proof}

\paragraph{Aknowledgment.}
I would like to thank my advisor, Filippo Santambrogio, for discussions and advices which have proved to be very useful. This work was partially supported by the PGMO research project MACRO ``Mod\`eles d'Approximation Continue de R\'eseaux Optimaux''.
\bibliographystyle{plain}

\end{document}